\author{Gabriele Mancini\thanks{The authors are supported by Swiss National Science Foundation, project nr. PP00P2-144669. \hspace{1cm} Date: August 25, 2016. Revised: May 5, 2017.} \\ \small Universit\"at Basel \\ \footnotesize \texttt{gabriele.mancini@unibas.ch} \and  Luca Martinazzi$^*$ \\ \small Universit\"at Basel  \\ \footnotesize \texttt{luca.martinazzi@unibas.ch}}
\title{The Moser-Trudinger inequality and its extremals on a disk via energy estimates}
\date{}
\newtheorem{trm}{Theorem}
\newtheorem{prop}[trm]{Proposition}
\newtheorem{cor}[trm]{Corollary}
\newtheorem{lemma}[trm]{Lemma}
\newtheorem{OP}{Open problem}
\newtheorem{thmx}{Theorem}
\newcommand{\R}{\mathbb{R}}
\newcommand{\de}{\partial}
\newcommand{\M}[1]{\mathcal{#1}}
\newcommand{\bra}[1]{\left({#1}\right)}
\newcommand{\ph}{\varphi}
\newcommand{\ra}{\rightarrow}
\newenvironment{proof}{\noindent\emph{Proof.}}{\phantom{ } \hfill$\square$\medskip}
\newenvironment{rmk}{\medskip\noindent\emph{Remark.}}{\medskip}
\DeclareMathOperator{\loc}{loc}
\newcommand{\be}{\begin{equation*}}
\newcommand{\ee}{\end{equation*}}
\newcommand{\bdm}{\begin{displaymath}}
\newcommand{\edm}{\end{displaymath}}
\begin{document}
\maketitle

%\begin{abstract} We study the Dirichlet energy of non-negative radially symmetric critical points $u_\mu$ of the Moser-Trudinger inequality on the unit disc in the plane, and prove that it expands as $$4\pi+\frac{4\pi}{\mu^{4}}+o(\mu^{-4})\le \int_{B_1}|\nabla u_\mu|^2dx\le 4\pi+\frac{6\pi}{\mu^{4}}+o(\mu^{-4}),\quad \text{as }\mu\to\infty.$$
%where $\mu=u_\mu(0)$ is the maximum of $u_\mu$. As a consequence we obtain a new proof of the Moser-Trudinger inequality, of the Carleson-Chang result about the existence of extremals for the same inequality, and of the Struwe and Lamm-Robert-Struwe result about the existence of at least two critical points in the supercritical regime (only in the case of the unit disk).
%
%Our results are stable under perturbations of the Moser-Trudinger inequality sufficiently weak, and we identify the critical level of perturbation for which, although the perturbed Moser-Trudinger inequality still holds, the energy of its critical points converges to $4\pi$ \emph{from below}, and we expect in some of these cases, that the existence of extremals does not hold, nor the existence of critical points in the supercritical regime.
%\end{abstract}

\begin{abstract} We study the Dirichlet energy of non-negative radially symmetric critical points $u_\mu$ of the Moser-Trudinger inequality on the unit disc in $\R^2$, and prove that it expands as $$4\pi+\frac{4\pi}{\mu^{4}}+o(\mu^{-4})\le \int_{B_1}|\nabla u_\mu|^2dx\le 4\pi+\frac{6\pi}{\mu^{4}}+o(\mu^{-4}),\quad \text{as }\mu\to\infty,$$
where $\mu=u_\mu(0)$ is the maximum of $u_\mu$. As a consequence, we obtain a new proof of the Moser-Trudinger inequality, of the Carleson-Chang result about the existence of extremals, and of the Struwe and Lamm-Robert-Struwe multiplicity result in the supercritical regime (only in the case of the unit disk).\\
Our results are stable under sufficiently weak perturbations of the Moser-Trudinger functional. We explicitly identify the critical level of perturbation for which, although the perturbed Moser-Trudinger inequality still holds, the energy of its critical points converges to $4\pi$ \emph{from below}. We expect, in some of these cases, that the existence of extremals does not hold, nor the existence of critical points in the supercritical regime.
%{\bf AMS subject classification: 35B33,  35B44, 35B38, 35J61}
\end{abstract}

%\noindent{\it Key Words:} Moser-Trudinger inequality, critical points, blow-up analysis.

%\bigskip

\section{Introduction}

Consider the Moser-Trudinger inequality in dimension two (see \cite{mos,poh,tru}):
\begin{thmx}[Moser \cite{mos}]\label{trmMoser}
For $\Omega\subset\R^2$ with finite measure $|\Omega|$ we have
\begin{equation}\label{MT}
\sup_{u\in H^1_0(\Omega):\|\nabla u\|_{L^2}^2\le 4\pi}\int_{\Omega}e^{u^2}dx\le C|\Omega|.
\end{equation}
Moreover the constant $4\pi$ is sharp.
\end{thmx}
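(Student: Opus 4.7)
The plan is to follow Moser's classical strategy \cite{mos}: reduce to a radial problem on a disk by symmetrization, convert to a one-dimensional integral inequality via a logarithmic change of variables, prove that inequality by a level-set decomposition, and verify sharpness with an explicit extremizing sequence.

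For the reduction, let $u^*$ be the Schwarz symmetric decreasing rearrangement of $|u|$ on the disk $B_R$ with $\pi R^2 = |\Omega|$. Equimeasurability preserves $\int e^{u^2}dx$, while the P\'olya--Szeg\H{o} inequality gives $\|\nabla u^*\|_{L^2} \le \|\nabla u\|_{L^2}$, so it suffices to consider non-negative, radially symmetric, non-increasing $u$ on $B_R$. Setting $v(t) := u(R e^{-t/2})$ for $t \ge 0$, a direct change of variables yields $v(0) = 0$ together with
$$\|\nabla u\|_{L^2}^2 = 4\pi \int_0^\infty (v'(t))^2\,dt \qquad \text{and} \qquad \int_{B_R} e^{u^2}\,dx = \pi R^2 \int_0^\infty e^{v(t)^2 - t}\,dt.$$
Hence \eqref{MT} is equivalent to the one-dimensional bound $\int_0^\infty e^{v^2 - t}\,dt \le C$ under the constraints $v(0)=0$ and $\int_0^\infty (v')^2\,dt \le 1$.

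The heart of the argument --- and the main obstacle --- is this one-dimensional inequality. Cauchy--Schwarz yields the pointwise bound $v(t)^2 \le t$, so the integrand is at most $1$, but pointwise control alone is not integrable on $[0,\infty)$. The quantitative refinement proceeds by splitting the domain: on the set where $v(t)^2 \le t - s$ one has $e^{v^2-t} \le e^{-s}$, and the measure of this sublevel set is controlled via the Hardy-type estimate $v(t)^2 \le t \int_0^t (v')^2\,d\tau$; on the complementary near-saturation set, Cauchy--Schwarz is nearly sharp, which forces $v'(\tau)$ to be close to $1/(2\sqrt{\tau})$ on a long subinterval and thus rapidly exhausts the unit budget $\int (v')^2 \le 1$, confining this set to bounded logarithmic length. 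Combining the two contributions produces a universal constant $C$.

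Finally, sharpness of $4\pi$ is established by the standard Moser sequence
$$M_n(x) = \frac{1}{\sqrt{2\pi}}\begin{cases} \sqrt{\log n}, & |x|\le 1/n,\\ \log(1/|x|)/\sqrt{\log n}, & 1/n < |x| \le 1,\\ 0, & |x|>1, \end{cases}$$
which satisfies $\|\nabla M_n\|_{L^2}^2 = 1$. For any $\alpha > 4\pi$ the computation $\int_{B_1} e^{\alpha M_n^2}\,dx \ge \pi n^{\alpha/(2\pi)-2} \to \infty$ precludes any constant larger than $4\pi$ in \eqref{MT}.
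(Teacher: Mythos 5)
Your argument reproduces Moser's classical proof (symmetrization, the logarithmic change of variables $v(t)=u(Re^{-t/2})$, a superlevel-set estimate, and the Moser sequence for sharpness). That is a legitimate route, but it is entirely disjoint from what this paper does. The paper's whole point is to derive Theorem~\ref{trmMoser} as a \emph{consequence} of the energy estimate, Theorem~\ref{trmEnergy}: one starts from the elementary subcritical inequality \eqref{MTalpha} obtained from the crude H\"older bound \eqref{stimaholder}, produces for each $\alpha<4\pi$ a radial decreasing extremal $u_\alpha$ solving \eqref{eqMTalpha} via Proposition~\ref{propsubcrit2}, and then sends $\alpha_k\uparrow 4\pi$; the lower bound $\|\nabla u_{\alpha_k}\|_{L^2}^2\ge 4\pi+4\pi\mu_k^{-4}+o(\mu_k^{-4})$ of \eqref{eqEnergy} is incompatible with $\|\nabla u_{\alpha_k}\|_{L^2}^2=\alpha_k<4\pi$, so the extremals cannot blow up, hence converge in $C^1$ to a maximizer of the critical inequality \eqref{MT} on $B_1$, and symmetrization handles general $\Omega$. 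This route simultaneously yields the inequality, the Carleson--Chang existence result (Theorem~\ref{trmCC}), and the perturbed versions (Corollary~\ref{corCC}, Theorem~\ref{trmexample}), none of which Moser's argument gives; the price is the blow-up analysis of Section~\ref{sec3}. One correction to your sketch, independent of the comparison: the set $\{t: v(t)^2\ge t-s\}$ cannot have $s$-independent (``bounded logarithmic'') measure, since it already contains $[0,s]$, and its complement $\{v^2\le t-s\}$ is generically of infinite measure, so the decomposition as you wrote it does not close. What Moser actually establishes is the bound $|\{t: v(t)^2\ge t-s\}|\le s+C$ with $C$ universal, which when fed into the layer-cake identity $\int_0^\infty e^{v^2-t}\,dt=\int_0^\infty |\{v^2-t>-s\}|\,e^{-s}\,ds$ produces the uniform constant.
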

As noticed by Moser, the subcritical inequality
\begin{equation}\label{MTalpha}\tag{$I_\alpha$}
\sup_{u\in H^1_0(\Omega):\|\nabla u\|_{L^2}^2\le \alpha}\int_{\Omega}e^{u^2}dx\le \frac{|\Omega|}{1-\frac{\alpha}{4\pi}},
\end{equation}
 is easy to obtain for $\alpha < 4\pi$. Indeed, by symmetrization and scaling one reduces to the case of the unit disk $\Omega=B_1$ and $u=u(r)$ radially symmetric. Then, by the fundamental theorem of calculus and H\"older's inequality, one bounds
\begin{equation}\label{stimaholder}
|u(r)|^2\le \left(\int_r^1 |u'(\rho)|d\rho \right)^2 \le \int_r^1 2\pi \rho |u'(\rho)|^2d\rho \int_r^1 \frac{d\rho}{2\pi \rho}\le \frac{\|\nabla u\|_{L^2}^2}{2\pi}\log\frac{1}{r},
\end{equation}
hence if $\|\nabla u\|_{L^2}^2\le \alpha<4\pi$,
$$\int_{B_1} e^{u^2}dx\le \int_0^1 2\pi r e^{\frac{\alpha}{2\pi}\log\frac{1}{r}} dr=2\pi\int_0^1 r^{1-\frac{\alpha}{2\pi}}dr=\frac{\pi}{1-\frac{\alpha}{4\pi}}.$$
The difficult part of Theorem \ref{trmMoser} is to prove that \eqref{MT} also holds with the critical constant $4\pi$. To do that Moser considers a special class of functions, which are now known as Moser-functions or broken-line functions, and notices that for such functions \eqref{MT} holds (and it fails if we replace $4\pi$ by a larger constant). Further he shows that any function for which \eqref{stimaholder} is close to an identity at one point must be close to a Moser function in a suitable sense.
%again satisfying \eqref{MT}. 

\medskip

The existence of maximizers  (usually called \emph{extremals})  for the Moser-Trudinger inequality has been pioneered for $\Omega=B_1$ by L. Carleson and A. Chang \cite{CC}: 
\begin{thmx}[Carleson-Chang \cite{CC}]\label{trmCC} When $\Omega=B_1$ is the unit disk, the inequality \eqref{MT} admits an extremal.
\end{thmx}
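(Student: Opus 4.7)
The plan is to deduce Theorem~\ref{trmCC} from the lower-bound part of the energy expansion announced in the abstract, combined with a classical subcritical approximation. The point is that any blow-up of subcritical maximizers $u_\alpha$ of $(I_\alpha)$ as $\alpha\nearrow 4\pi$ corresponds, via the Euler-Lagrange equation and Schwarz symmetrization, to the radial critical points $u_{\mu_\alpha}$ studied in the paper with $\mu_\alpha=u_\alpha(0)\to\infty$; but then the energy lower bound forces their Dirichlet energy strictly above $4\pi$, contradicting $\|\nabla u_\alpha\|_{L^2}^2=\alpha<4\pi$.

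More concretely, for each $\alpha\in(0,4\pi)$ the functional $u\mapsto\int_{B_1}e^{u^2}dx$ is continuous on $\{\|\nabla u\|_{L^2}^2\le\alpha\}\subset H^1_0(B_1)$ (using $(I_{\alpha'})$ for $\alpha'\in(\alpha,4\pi)$ and Vitali's theorem), so the direct method produces a maximizer $u_\alpha$. Schwarz symmetrization makes $u_\alpha$ radial and non-increasing, and the Euler-Lagrange equation $-\Delta u_\alpha=\lambda_\alpha u_\alpha e^{u_\alpha^2}$ in $B_1$ with $u_\alpha=0$ on $\partial B_1$ identifies $u_\alpha$ with the critical point $u_{\mu_\alpha}$ of the paper, with $\mu_\alpha:=u_\alpha(0)$ and $\|\nabla u_\alpha\|_{L^2}^2=\alpha$.

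Now pick $\alpha_n\nearrow 4\pi$ and set $u_n:=u_{\alpha_n}$, $\mu_n:=\mu_{\alpha_n}$. If $\mu_n\to\infty$ along some subsequence, the abstract's lower bound yields
$$\alpha_n=\|\nabla u_n\|_{L^2}^2\ge 4\pi+\frac{4\pi}{\mu_n^4}+o(\mu_n^{-4})>4\pi$$
for large $n$, contradicting $\alpha_n<4\pi$. Hence $\mu_n=\|u_n\|_{L^\infty}$ is bounded; standard elliptic regularity applied to the Euler-Lagrange equation then yields uniform $C^{2,\beta}(\overline{B_1})$-bounds, and a subsequence converges in $C^1$ to some $u_*\in H^1_0(B_1)$ with $\|\nabla u_*\|_{L^2}^2=4\pi$ and $\int_{B_1} e^{u_n^2}dx\to\int_{B_1} e^{u_*^2}dx$ by dominated convergence. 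For any competitor $v\in H^1_0(B_1)$ with $\|\nabla v\|_{L^2}^2\le 4\pi$, the dilations $v_k:=(1-1/k)v$ satisfy $\|\nabla v_k\|_{L^2}^2<\alpha_n$ for $n,k$ large, so $\int_{B_1} e^{v_k^2}dx\le\int_{B_1} e^{u_n^2}dx$; passing to the limit first in $n$, then in $k$ (monotone convergence), gives $\int_{B_1} e^{v^2}dx\le\int_{B_1} e^{u_*^2}dx$, so $u_*$ is an extremal.

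The only non-routine step is the energy lower bound itself, which we use here as a black box; it replaces the delicate Carleson-Chang analysis (a sharp computation of $\int e^{u^2}$ along a blow-up profile, followed by the construction of an explicit Moser-type test function exceeding that threshold) by a single clean inequality. Proving that lower bound is of course the main obstacle, and is the actual technical content of the paper.
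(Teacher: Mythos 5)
Your argument is correct and follows the same path as the paper's proof in Section~\ref{sec2}: build radial, decreasing subcritical maximizers $u_{\alpha_n}$ via the direct method (Proposition~\ref{propsubcrit2}), rule out blow-up of $\mu_n=u_{\alpha_n}(0)$ by the lower bound in Theorem~\ref{trmEnergy}, then pass to a $C^1$ limit $u_*$ and show it is extremal by comparing with rescaled competitors. The only cosmetic difference is rescaling competitors by $(1-1/k)$ rather than $\sqrt{\alpha_k/4\pi}$; otherwise the two arguments coincide.
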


The original proof of Theorem \ref{trmCC} is based on estimating
$$F(u):=\int_{B_1}e^{u^2}dx$$
on a sequence $u_k$ maximizing the supremum in \eqref{MT}, and showing, in a very clever way, that $\limsup_{k\to\infty} F(u_k)\le \pi(1+ e)$ if the sequence blows-up. On the other hand, this cannot be the case, since the authors exhibit a function $u^*$ such that $F(u^*)>\pi (1+e)$. Then the sequence $(u_k)$ is precompact and converges to a maximizer. This method has been extended to several more general cases, starting from the works of Struwe \cite{stru}, Flucher \cite{flu} and Li \cite{Li}.

\medskip

In this paper we shall give an alternative approach to Theorems \ref{trmMoser} and \ref{trmCC}, based on estimating the Dirichlet energy of the extremals of subcritical inequalities. Indeed it is easy to prove that the subcritical inequality \eqref{MTalpha} has a maximizer $u_\alpha$ for every $\alpha<4\pi$, see Proposition \ref{propsubcrit2} below.
Such extremal satisfies
\begin{equation}\label{eqMTalpha}
-\Delta u_\alpha =\lambda_\alpha u_\alpha e^{u_\alpha^2},
\end{equation}
for a positive Lagrange multiplier $\lambda_\alpha$. The crucial question is whether $u_\alpha$ converges as $\alpha\uparrow 4\pi$. The answer is affirmative and follows easily from the energy estimate of the next theorem, which is the core of our argument. %Then in Section \ref{sec2} we will show how to use this to prove Theorems \ref{trmMoser} and \ref{trmCC}.

\begin{trm}\label{trmEnergy}
Let $(u_k)\subset H^1_0(B_1)$ be any sequence (possibly unbounded) of radially symmetric and positive solutions\footnote{Actually the radial symmetry follows from positivity and the moving plane technique.} to 
\begin{equation}\label{eqMTk}
-\Delta u_k =\lambda_k u_k e^{u_k^2},
\end{equation}
for some $\lambda_k>0$. Assume
\begin{equation}\label{defmuk}
\mu_k:=u_k(0)=\max_{B_1}u_k\to \infty,\quad \text{as }k\to\infty.
\end{equation}
Then
\begin{equation}\label{eqEnergy}
4\pi +\frac{4\pi}{\mu_k^4}+o(\mu_k^{-4})\le \|\nabla u_k\|_{L^2}^2\le  4\pi +\frac{6\pi}{\mu_k^4}+o(\mu_k^{-4}).
\end{equation}
\end{trm}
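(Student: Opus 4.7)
The plan is a matched asymptotic expansion of $u_k$ near and away from the concentration point $0$, combined with a tailored Pohozaev-type identity for the energy.

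\textbf{Step 1 (Blow-up and matching).} I would define the scale $r_k>0$ by $r_k^2\lambda_k\mu_k^2 e^{\mu_k^2}=1$ and the rescaled function $\eta_k(y):=\mu_k(u_k(r_k y)-\mu_k)$ on $B_{1/r_k}$; using $u_k(r_k y)^2-\mu_k^2=2\eta_k+\eta_k^2/\mu_k^2$, the equation becomes
\[
-\Delta\eta_k=\bigl(1+\eta_k/\mu_k^2\bigr)\,e^{2\eta_k+\eta_k^2/\mu_k^2},\qquad \eta_k(0)=0,\ \eta_k\le 0.
\]
Standard elliptic estimates (using $\eta_k\le 0$ to bound the nonlinearity) yield $\eta_k\to\eta_0$ in $C^2_{\mathrm{loc}}(\R^2)$, where $\eta_0(y)=-\log(1+|y|^2/4)$ solves $-\Delta\eta_0=e^{2\eta_0}$ with $\int_{\R^2}e^{2\eta_0}=4\pi$. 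In parallel, setting $w_k=\mu_k u_k$, the mass concentration $\mu_k\lambda_k u_k e^{u_k^2}\rightharpoonup 4\pi\delta_0$ together with $w_k|_{\partial B_1}=0$ forces $w_k\to G:=-2\log|\cdot|$ in $C^2_{\mathrm{loc}}(\overline{B_1}\setminus\{0\})$. Matching the two expansions in the overlap annulus $r_k\ll|x|\ll 1$ gives the leading identity $\log r_k=-\mu_k^2/2-\log 2+o(1)$, equivalently $\lambda_k\mu_k^2=4+o(1)$; refining the matching to order $\mu_k^{-2}$ produces $\eta_k=\eta_0+\mu_k^{-2}\psi_k+o(\mu_k^{-2})$ and $w_k=G+\mu_k^{-2}h_k+o(\mu_k^{-2})$, with $\psi_k$ a solution of the linearised Liouville equation $(-\Delta-2 e^{2\eta_0})\psi_k=e^{2\eta_0}(\eta_0+\eta_0^2)$ and $h_k$ determined on $B_1\setminus\{0\}$ by the matching and the boundary datum.

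\textbf{Step 2 (Energy identity and expansion).} Multiplying the radial equation $-(r u_k')'=\lambda_k r u_k e^{u_k^2}$ by $u_k'$ and integrating twice yields the Pohozaev-type identity
\[
\|\nabla u_k\|_{L^2(B_1)}^2=-2\lambda_k\int_{B_1}\log|x|\,e^{u_k^2}\,dx-\lambda_k\int_{B_1}e^{u_k^2}\,dx,
\]
which is well suited to an expansion in powers of $\mu_k^{-2}$ because the $\log|x|$ factor interacts naturally with $\log r_k$. Splitting both integrals at $|x|=Rr_k$ with $R$ large but fixed, on the inner piece I would rescale $y=x/r_k$ and Taylor-expand $e^{u_k^2}=e^{\mu_k^2}\,e^{2\eta_k}\,e^{\eta_k^2/\mu_k^2}$, using the moments $\int_{\R^2}\eta_0\,e^{2\eta_0}=-4\pi$, $\int_{\R^2}\eta_0^2\,e^{2\eta_0}=8\pi$ (computable via the substitution $s=1+|y|^2/4$) and $\int_{\R^2}\log|y|\,e^{2\eta_0}=4\pi\log 2$ (via the classical identity $\int_0^\infty\log t/(1+t)^2\,dt=0$). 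On the outer piece I would use $u_k\approx G/\mu_k$ together with $\lambda_k\mu_k^2=4+o(1)$ and the moments $\int_{B_1}(\log|x|)^n\,dx=(-1)^n n!\,\pi/2^n$. Combining the two pieces, the $\log r_k$-factors from the inner region cancel against the bulk contribution from the outer region, and the coefficient of $\mu_k^{-2}$ collapses thanks to the vanishing $\int_{\R^2} e^{2\eta_0}(2\eta_0+\eta_0^2)\,dy=-8\pi+8\pi=0$. This leaves $\|\nabla u_k\|_{L^2}^2=4\pi+c_k\,\mu_k^{-4}+o(\mu_k^{-4})$.

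\textbf{Main obstacle.} Pinning down $c_k$ requires control of the radial component of $\psi_k$. The linearised operator $-\Delta-2e^{2\eta_0}$ has a one-dimensional radial kernel spanned by the scaling mode $\phi_0(y)=(1-|y|^2/4)/(1+|y|^2/4)$, so the coefficient of $\phi_0$ in $\psi_k$ is not seen by the bulk equation but fixed only by matching with the outer correction $h_k$, and through $h_k$ by the boundary. I expect that one bounds this coefficient two-sidedly by comparing $\psi_k$ and $h_k$ with explicit super- and sub-solutions of the respective linearised problems that are compatible with the matching asymptotics; this comparison argument, rather than an exact computation, is presumably what produces the asymmetric coefficients $4\pi$ (lower) and $6\pi$ (upper) in the $\mu_k^{-4}$ term of the statement.
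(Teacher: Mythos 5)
Your Step 1 blow-up setup is essentially identical to the paper's (same scaling, different normalization constant), and your Pohozaev identity in Step 2 is correct and is a genuinely different way of expressing the energy from the paper's simple identity $\|\nabla u_k\|_{L^2}^2=\int_{B_1}\lambda_k u_k^2 e^{u_k^2}\,dx$. However, the proposal has serious gaps that prevent it from reaching the conclusion.

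\textbf{The expansion stops too early.} You carry the inner expansion only to order $\mu_k^{-2}$, writing $\eta_k=\eta_0+\mu_k^{-2}\psi_k+o(\mu_k^{-2})$, and observe (correctly) that the $\mu_k^{-2}$ coefficient of the energy vanishes thanks to $\int e^{2\eta_0}(2\eta_0+\eta_0^2)\,dy=0$. But the theorem is about the $\mu_k^{-4}$ coefficient, and to compute it you need the \emph{next} correction term (the paper's $z_0$), its asymptotics at infinity, and uniform control of the error in the expansion up to radii of order $e^{\mu_k}$. The paper writes $\eta_k=\eta_0+w_0/\mu_k^2+z_0/\mu_k^4+\phi_k/\mu_k^6$, bounds $\phi_k$ on $[0,e^{\mu_k}]$ via a Banach fixed-point argument, and — crucially — determines the logarithmic coefficient $\beta$ of $z_0$ at infinity not by solving the ODE explicitly but via the integration-by-parts identity (Lemma \ref{lemmamagic}) $\beta=-\frac{2}{\pi}\int_{\R^2}\frac{|x|^2-1}{(1+|x|^2)^3}f\,dx$ for radial solutions of $-\Delta w=4e^{2\eta_0}(f+2w)$. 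None of this appears in your proposal, and without it the $\mu_k^{-4}$ coefficient cannot be pinned down.

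\textbf{The ``main obstacle'' you identify is not there.} You argue that the coefficient of the kernel element $\phi_0$ in $\psi_k$ is undetermined by the bulk equation and must be fixed by matching with the outer solution. This misses that each correction in the inner expansion is pinned down by Cauchy data at the origin: since $u_k$ is radial and attains its maximum $\mu_k$ at $0$, one has $\eta_k(0)=\eta_k'(0)=0$, hence every term $w_0,z_0,\phi_k$ also vanishes together with its derivative at $0$. Because $\phi_0(0)=1\neq 0$, the kernel mode is excluded automatically; there is no free parameter, and no two-sided sub/super-solution comparison is needed for it. Consequently your proposed mechanism for producing the asymmetric constants $4\pi$ and $6\pi$ is wrong. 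In the paper, the inner contribution over $B_{r_k s_k}$ (with $s_k$ a high power of $\mu_k$) is computed \emph{exactly} to be $4\pi+4\pi/\mu_k^4+o(\mu_k^{-4})$ — this gives both bounds — while the outer region $B_1\setminus B_{r_k s_k}$ can only be bounded \emph{from above} by $2\pi/\mu_k^4+o(\mu_k^{-4})$, using the pointwise comparison $\eta_k\le\eta_0$ there. The asymmetry is entirely a consequence of the outer estimate being one-sided, not of any ambiguity in the inner expansion.
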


To prove Theorem \ref{trmEnergy} we build up on a technique introduced in \cite{MM} and perform a Taylor expansion of the solutions $u_k$ near the origin, which needs to be precise enough to obtain \eqref{eqEnergy}, see Section \ref{sec3}.

%The proof of Theorem \ref{trmEnergy} relies on a Taylor expansion of the solutions $u_k$ near the origin, which needs to be precise enough to obtain \eqref{eqEnergy}, see Section \ref{sec3}.

\medskip

%Given Theorem \ref{trmEnergy}, for every $k$, let $u_k$ be an extremal of \eqref{MTalpha} for $\alpha=4\pi-\frac{1}{k}$.
%as will be given by Proposition \ref{propsubcrit2}. 
%By Theorem \ref{trmEnergy} the values $u_k(0)=\max_{B_1}u_k$ must remain bounded, and by elliptic estimates, $u_k\to u_\infty$, which will be an extremal of \eqref{MT}, hence proving Theorem \ref{trmCC}. This also implies that \eqref{MT} holds for the critical value $4\pi$. The details will be provided in the next section.

Consider now a mildly perturbed, though completely equivalent version of Theorem \ref{trmMoser}, namely for $\alpha\in (0,4\pi]$ replace \eqref{MT} and \eqref{MTalpha} with
\begin{equation}\label{MTpert}\tag{$I_{\alpha}^g$}
\sup_{u\in H^1_0(\Omega):\|\nabla u\|_{L^2}^2\le \alpha}\int_{\Omega}(1+g(u))e^{u^2}dx\le C_{g,\alpha},
\end{equation}
where
\begin{equation}\label{condg}
g\in C^1(\R),\quad \inf_{\R} g>-1,\quad g(t)=g(-t) \quad \text{and} \quad  \lim_{|t|\to\infty}g(t)=0.
\end{equation}
We want to investigate whether an analog of Theorem \ref{trmEnergy} holds for positive critical points of \eqref{MTpert}, and consequently whether $(I^g_{4\pi})$ admits an extremal. As we shall now see, this is the case if $g$ decays well enough at infinity. More precisely, observe that the critical points of \eqref{MTpert} satisfy
\begin{equation}\label{eqpert0}
-\Delta u=\lambda \left(1+g(u)+\frac{g'(u)}{2u}\right)u e^{u^2}=\lambda(1+h(u))ue^{u^2},
\end{equation}
for some $\lambda\in \R$, where we set
\begin{equation}\label{defh}
h(t):=g(t)+\frac{g'(t)}{2t}, \quad t\in \R\setminus\{0\}.
\end{equation}
%The behaviour of $g(t)$ at infinity will be determinant, in the sense that we look for an asymptotic behaviour of $g(t)$ as $t\to+\infty$ which would guarantee that the analogs of Theorems \ref{trmEnergy}, and therefore of Theorem \ref{trmCC} still hold for \eqref{MTpert}. In this direction we shall assume
We further assume
\begin{equation}\label{condh1}
\inf_{(0,\infty)}h>-1, \quad \sup_{(0,\infty)} h<\infty,\quad \lim_{t\to \infty}t^2h(t)=0
\end{equation}
and
\begin{equation}\label{condh2}
\lim_{t\to \infty}\sup_{|s|\le 1}t^4\left| h\left(t+\frac{s(8\log t +1)}{t}\right)-h(t)\right|=0.
\end{equation}
A typical function $g$ that we have in mind is $g(t)=|t|^{-p}$ near infinity for some $p>2$. More generally one can take a function $\chi\in C^\infty([0,\infty))$ with $\chi\equiv 0$ on $[0,1]$, $\chi\equiv 1$ on $[2,\infty)$, and consider for $R>0$ sufficiently large
\begin{equation}\label{exh1}
g(t)=a\chi(R^{-1}|t|)\log^q(|t|)|t|^{-p},\quad a,q\in \R,\, p>2,
\end{equation}
or even the oscillating function
\begin{equation}\label{exh2}
g(t)=a\chi(R^{-1}|t|)\cos(\log|t|)|t|^{-p},\quad a\in \R,\,p>2.
\end{equation}

Then we have the following generalized versions of Theorems \ref{trmEnergy} and \ref{trmCC}.

\begin{trm}\label{trmEnergypert}
Let $(u_k)\subset H^1_0(B_1)$ be a sequence of radially symmetric and positive solutions to 
\begin{equation}\label{eqMTpert}
-\Delta u_k =\lambda_k (1+h(u_k))u_k e^{u_k^2},
\end{equation}
with $\lambda_k>0$ and $h:(0,\infty)\to\R$ satisfying \eqref{condh1}-\eqref{condh2}. Assume that \eqref{defmuk} holds. Then
\begin{equation}\label{eqEnergypert}
4\pi +\frac{4\pi}{\mu_k^4}+o(\mu_k^{-4})\le \|\nabla u_k\|_{L^2}^2\le  4\pi +\frac{4\pi+ 2\pi(1+\sup h) }{\mu_k^4}+o(\mu_k^{-4}).
\end{equation}
\end{trm}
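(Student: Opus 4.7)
The plan is to replay the proof of Theorem \ref{trmEnergy} with the factor $(1+h(u_k))$ inserted on the right-hand side, using conditions \eqref{condh1}--\eqref{condh2} to keep the perturbative corrections below the $\mu_k^{-4}$ threshold, except for one term accounting for the extra $2\pi\sup h/\mu_k^{4}$ in the upper bound.

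I would begin with the standard blow-up: choose $r_k$ by $\lambda_k\mu_k^2 r_k^2 e^{\mu_k^2} = c$ and set $w_k(y):=2\mu_k(u_k(r_k y)-\mu_k)$. Since $u_k(r_k y)\to\infty$ locally uniformly and $\lim_{t\to\infty} t^2 h(t)=0$, the coefficient $h(u_k(r_k y))$ vanishes in the limit, so $w_k$ converges in $C^2_{\loc}(\R^2)$ to the standard Liouville bubble, exactly as in the unperturbed case. I would then propagate the Taylor expansion of $u_k(r)$ in powers of $\mu_k^{-1}$ near $r=0$ to the precision required to extract $\mu_k^{-4}$-terms from the energy, following the method of \cite{MM} and the proof of Theorem \ref{trmEnergy}. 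Over the bubble region $r\le Rr_k$ one has $u_k(r)=\mu_k+O((\log\mu_k)/\mu_k)$, and hypothesis \eqref{condh2} is precisely designed to yield $h(u_k(r))=h(\mu_k)+o(\mu_k^{-4})$ uniformly here, while \eqref{condh1} gives $h(\mu_k)=o(\mu_k^{-2})$. Consequently $(1+h(u_k))$ can be replaced by $1$ at every stage of the expansion at cost $o(\mu_k^{-4})$, and the Taylor analysis reduces step by step to the unperturbed one.

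For the energy I would use
\[
\|\nabla u_k\|_{L^2}^2 = \lambda_k\int_{B_1}(1+h(u_k))u_k^2 e^{u_k^2}\,dx,
\]
splitting $B_1$ into $B_{Rr_k}$ and its complement. The complement contributes $o(\mu_k^{-4})$ by standard concentration arguments (with $\sup h<\infty$ absorbed into the integrand), and the unperturbed bubble integral $\lambda_k\int_{B_{Rr_k}}u_k^2 e^{u_k^2}dx$ reproduces verbatim the two-sided bound from Theorem \ref{trmEnergy}. The perturbative piece $\lambda_k\int_{B_{Rr_k}} h(u_k)u_k^2 e^{u_k^2}\,dx$ is controlled from above by $\sup h\cdot\lambda_k\int u_k^2 e^{u_k^2}dx$, which produces an extra $2\pi\sup h\cdot\mu_k^{-4}(1+o(1))$; added to the unperturbed $6\pi/\mu_k^{4}=(4\pi+2\pi)/\mu_k^{4}$ this yields the claimed $(4\pi+2\pi(1+\sup h))/\mu_k^{4}$. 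From below, $h(u_k)=h(\mu_k)+o(\mu_k^{-4})$ together with $\mu_k^2 h(\mu_k)=o(1)$ renders the perturbative contribution negligible at order $\mu_k^{-4}$, so the lower bound remains identical to that of Theorem \ref{trmEnergy}.

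The main technical obstacle is the $\mu_k^{-4}$-bookkeeping of $h$: the variation of $u_k$ across the bubble occurs at precisely the scale $(\log\mu_k)/\mu_k$ featured in \eqref{condh2}, and without this hypothesis even a mild oscillation of $h$ on that scale (as permitted in example \eqref{exh2}) could generate $O(\mu_k^{-4})$ errors not dominated by $\sup h$. I therefore expect that the bulk of the work lies in verifying, at each step of the expansion in which $h(u_k(r))$ appears, that the replacement by $h(\mu_k)$ introduces only $o(\mu_k^{-4})$ remainders and that these cancel uniformly across the bubble.
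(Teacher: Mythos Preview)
Your proposal has the right general shape but misidentifies \emph{where} the different contributions of order $\mu_k^{-4}$ come from, and it underestimates the effect of $h$ on the expansion of $\eta_k$.

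First, the decomposition $6\pi/\mu_k^4=4\pi/\mu_k^4+2\pi/\mu_k^4$ in Theorem~\ref{trmEnergy} corresponds to inner versus outer, not to two pieces of the inner integral: the bubble region $B_{s_kr_k}$ (with $s_k\ge\mu_k^p$, $p>2$, not a fixed $R$) contributes exactly $4\pi+4\pi/\mu_k^4+o(\mu_k^{-4})$ as an \emph{equality}, while the annulus $B_1\setminus B_{s_kr_k}$ contributes between $0$ and $2\pi/\mu_k^4+o(\mu_k^{-4})$. Your claim that the complement is $o(\mu_k^{-4})$ ``by standard concentration arguments'' is therefore incorrect---it is precisely this outer region that produces the extra $2\pi(1+\sup h)/\mu_k^4$ in the upper bound, because there $u_k$ ranges from $0$ to $\sim\mu_k$ and one can only use $h\le\sup h$. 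Conversely, your upper bound $\sup h\cdot\lambda_k\int_{B_{Rr_k}}u_k^2e^{u_k^2}dx$ for the inner perturbative piece is of order $\sup h\cdot 4\pi=O(1)$, not $O(\mu_k^{-4})$.

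Second, and more subtly, the assertion that ``$(1+h(u_k))$ can be replaced by $1$ at every stage of the expansion at cost $o(\mu_k^{-4})$'' is false at the level of the ODE for $\eta_k$. The replacement perturbs the right-hand side of \eqref{eq:etakh} by $4h(\mu_k)e^{2\eta_0}+o(\mu_k^{-4})$, which is only $o(\mu_k^{-2})$. This forces an additional term $h(\mu_k)\zeta_0$ in the expansion of $\eta_k$, where $\zeta_0(r)=-1+1/(1+r^2)$ solves $-\Delta\zeta_0=4e^{2\eta_0}(1+2\zeta_0)$. Without including this term the fixed-point/error-propagation argument of Proposition~\ref{lemmaerror} does not close at the required precision. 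The reason this correction ultimately does not alter the $4\pi/\mu_k^4$ in the energy is a genuine cancellation, $\int_{\R^2}\Delta\zeta_0\,dx=0$, which must be checked explicitly; it is not a consequence of $h(\mu_k)=o(\mu_k^{-2})$ alone, and your ``from below'' argument---replacing $h(u_k)$ by $h(\mu_k)$ and then discarding $h(\mu_k)\cdot 4\pi=o(\mu_k^{-2})$---leaves an error one order too large.
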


%Theorem \ref{trmEnergypert} gives us the following analog of Theorem \ref{trmCC}, see Section \ref{sec2}.

\begin{cor}\label{corCC}
If $g$ satisfies \eqref{condg} and $h$ as in \eqref{defh} satisfies \eqref{condh1} and \eqref{condh2}, then $(I_{4\pi}^g)$ with $\Omega= B_1$ admits an extremal.
\end{cor}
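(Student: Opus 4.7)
The strategy is to realize the extremal as a $C^1$-limit of radially symmetric maximizers $u_k$ of the subcritical problems $(I_{\alpha_k}^g)$ with $\alpha_k\uparrow 4\pi$, using Theorem~\ref{trmEnergypert} to exclude concentration. By Proposition~\ref{propsubcrit2}, for each $\alpha<4\pi$ the subcritical problem $(I_\alpha^g)$ admits a maximizer $u_\alpha$; since $g$ is even by \eqref{condg}, the functional $F_g(u):=\int_{B_1}(1+g(u))e^{u^2}\,dx$ is invariant under Schwarz symmetrization, so we may take $u_\alpha\ge 0$, radially symmetric and radially nonincreasing. A Lagrange-multiplier computation produces \eqref{eqpert0} with a multiplier $\lambda_\alpha$; for $\alpha$ close to $4\pi$ a suitable test function gives $F_g>F_g(0)$, so the constraint is active, $\|\nabla u_\alpha\|_{L^2}^2=\alpha$. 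Testing \eqref{eqpert0} against $u_\alpha$ together with \eqref{condh1} then forces $\lambda_\alpha>0$, and the strong maximum principle yields $u_\alpha>0$ in $B_1$.

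The crucial step is to show that $\mu_k:=u_k(0)$ stays bounded along $\alpha_k\uparrow 4\pi$. Suppose, for contradiction, that $\mu_k\to\infty$ along a subsequence. Since $h$ defined in \eqref{defh} satisfies \eqref{condh1}--\eqref{condh2} by assumption, Theorem~\ref{trmEnergypert} applies to $u_k$ and gives
\[
\|\nabla u_k\|_{L^2}^2\ge 4\pi+\frac{4\pi}{\mu_k^4}+o(\mu_k^{-4})>4\pi
\]
for $k$ large, contradicting $\|\nabla u_k\|_{L^2}^2=\alpha_k<4\pi$.

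With $\|u_k\|_{L^\infty}\le\mu_k$ uniformly bounded, one checks that $\lambda_k$ is bounded (by testing \eqref{eqMTpert} against $u_k$ and using $\|\nabla u_k\|_{L^2}^2\to 4\pi$), so the right-hand side of \eqref{eqMTpert} is uniformly bounded and standard elliptic regularity delivers $u_k\to u$ in $C^1(\overline{B_1})$ along a subsequence. Lower semicontinuity gives $\|\nabla u\|_{L^2}^2\le 4\pi$, so $u$ is admissible for $(I_{4\pi}^g)$, and dominated convergence gives $F_g(u_k)\to F_g(u)$. To conclude I verify $\lim_k C_{g,\alpha_k}=C_{g,4\pi}$: the inequality $\limsup\le$ is monotone, while the converse follows by approximating any admissible $v$ for $(I_{4\pi}^g)$ by $tv$ with $t\uparrow 1$, which is admissible for a subcritical problem and satisfies $F_g(tv)\to F_g(v)$ by dominated convergence. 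Combining, $F_g(u)=C_{g,4\pi}$, so $u$ is the desired extremal.

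The only genuinely nontrivial step is the exclusion of blow-up, supplied entirely by the sharp lower bound of Theorem~\ref{trmEnergypert}; the rest is routine subcritical compactness. This inverts the Carleson-Chang strategy: instead of comparing an upper bound on $F$ along a concentrating sequence with a lower bound coming from an explicit test function, the contradiction is now extracted directly at the level of Dirichlet energies, which along the subcritical maximizers are capped at $4\pi$ by construction.
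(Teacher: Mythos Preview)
Your proof is correct and follows the same approach as the paper: obtain radially symmetric subcritical maximizers from Proposition~\ref{propsubcrit2}, exclude blow-up via the lower bound in Theorem~\ref{trmEnergypert} (since $\mu_k\to\infty$ would force $\|\nabla u_k\|_{L^2}^2>4\pi>\alpha_k$), pass to a $C^1$-limit by elliptic estimates, and verify extremality of the limit. The only cosmetic difference is in the last step, where the paper argues directly by contradiction with a scaled competitor $\sqrt{\alpha_k/4\pi}\,v$ rather than phrasing it as $C_{g,\alpha_k}\uparrow C_{g,4\pi}$; the two arguments are equivalent.
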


It is natural to ask how sharp conditions \eqref{condh1} and \eqref{condh2} are. The following example shows that the quadratic decay is indeed critical.

\begin{trm}\label{trmexample} Let $h:\R\to [-1/2,1/2]$ satisfy $h(t)=-at^{-2}$ for $t\ge R$ for some $a>0$ and $R>0$ fixed, and let $(u_k)\subset H^1_0(B_1)$ be a sequence of radially symmetric positive solutions to \eqref{eqMTpert} satisfying \eqref{defmuk}. Then
$$4\pi +\frac{4\pi-4\pi a}{\mu_k^4}+o(\mu_k^{-4})\le \|\nabla u_k\|_{L^2}^2\le  4\pi +\frac{4\pi +2\pi(1+\sup h)-4\pi a}{\mu_k^4}+o(\mu_k^{-4}).$$
In particular for $a>\frac 32+\frac{\sup h}{2}$ we can find a value $\bar \mu$ such that for any positive solution $u$ to \eqref{eqpert0} with $u(0)\ge \bar \mu$ we have $\|\nabla u\|_{L^2}^2<4\pi$.
\end{trm}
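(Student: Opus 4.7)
The plan is to adapt the proof of Theorem \ref{trmEnergypert} to the present situation, in which the decay condition $\lim_{t\to\infty}t^2h(t)=0$ in \eqref{condh1} is replaced by $\lim_{t\to\infty}t^2h(t)=-a$, and \eqref{condh2} fails by a logarithmic factor. The expectation is that the Taylor-expansion machinery behind Theorem \ref{trmEnergypert} still applies, and that the only effect of the additional tail $-a/t^2$ is to shift the coefficient of $\mu_k^{-4}$ in the energy expansion by $-4\pi a$, both in the lower and in the upper bound.

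Concretely, I would first revisit the proof of Theorem \ref{trmEnergypert} and isolate the integrals in which the assumption $t^2h(t)\to 0$ is invoked. These are, schematically, integrals of the form $\int_{B_1}h(u_k)u_k^2 e^{u_k^2}\,dx$ produced by testing \eqref{eqMTpert} against $u_k$ together with a Pohozaev-type identity. In the blow-up region where $u_k\simeq \mu_k$, the product $u_k^2 h(u_k)$ now tends to $-a$ in place of $0$, so each such integral inherits an extra constant $-a$. Tracking this through the proof yields an additional $-4\pi a/\mu_k^4$ contribution to the energy expansion, exactly as stated. Away from the bubble, the uniform bound $|h|\le \tfrac12$ keeps the corresponding pieces of lower order, exactly as in the unperturbed argument.

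Next, I would address the (logarithmic) failure of \eqref{condh2}. For $h(t)=-a/t^2$ a direct computation gives
\be
t^4\bigl|h\bigl(t+\tfrac{s(8\log t+1)}{t}\bigr)-h(t)\bigr|=O(\log t),\quad |s|\le 1,
\ee
so the condition is violated only by a logarithmic factor. Since $h$ is explicit, I would replace the abstract modulus-of-continuity estimate \eqref{condh2} by a direct second-order Taylor expansion of $h$ at $u_k$ and verify that the resulting residual, once integrated over the bubble region (whose mass contributes an extra negative power of $\mu_k$), is indeed $o(\mu_k^{-4})$ and does not contaminate the leading correction.

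Finally, the last assertion is immediate from the upper bound: for $a>\tfrac32+\tfrac{\sup h}{2}$ one has $4\pi+2\pi(1+\sup h)-4\pi a<0$, so choosing $\bar\mu$ large enough that the $o(\mu_k^{-4})$ remainder is dominated yields $\|\nabla u\|_{L^2}^2<4\pi$ whenever $u(0)\ge\bar\mu$. The main obstacle will be the second step: verifying rigorously that the explicit polynomial form of $h(t)=-a/t^2$ absorbs the logarithmic loss in \eqref{condh2}, rather than producing an unwanted $\log\mu_k$ factor in the final expansion.
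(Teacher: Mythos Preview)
Your proposal has a genuine gap at the heuristic level. You say that in the bubble region $u_k^2 h(u_k)\to -a$, so ``each such integral inherits an extra constant $-a$'', producing the correction $-4\pi a/\mu_k^4$. But if you actually carry this out, the extra term is
\[
\int_{B_1}\lambda_k h(u_k)u_k^2 e^{u_k^2}\,dx
\;\approx\; -\frac{a}{\mu_k^2}\int_{B_{s_k}}4e^{2\eta_0}\,dx
\;=\;-\frac{4\pi a}{\mu_k^2}+\ldots,
\]
i.e.\ an order-$\mu_k^{-2}$ contribution, not $\mu_k^{-4}$. The final answer has no $\mu_k^{-2}$ term only because the perturbation $h$ also modifies the blow-up profile $\eta_k$ at order $\mu_k^{-2}$, and these two effects cancel. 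Your outline does not see this cancellation; it treats $h$ as an additive perturbation of the energy while keeping the profile fixed, which is inconsistent. (There is also no Pohozaev identity anywhere in the argument; the energy identity is simply $\|\nabla u_k\|_{L^2}^2=\int \lambda_k(1+h(u_k))u_k^2e^{u_k^2}$.)

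What the paper does is go back to the expansion machinery of Theorem~\ref{trmEnergy} rather than Theorem~\ref{trmEnergypert}, and change the reference profiles. Since $h(\mu_k+\eta/\mu_k)=-a\mu_k^{-2}(1+\eta/\mu_k^2)^{-2}$ is explicit, one expands it as $-a\mu_k^{-2}+2a\eta_0\mu_k^{-4}+O(\mu_k^{-6}\xi^2)$; this simultaneously replaces the abstract hypothesis \eqref{condh2} (your ``second step'') and shows that the perturbation enters already at order $\mu_k^{-2}$. One then replaces $w_0$ by $w_a=w_0-a\zeta_0$ (absorbing the new $-a$ into the equation for the $\mu_k^{-2}$ profile) and $z_0$ by a suitable $z_a$. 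The $\mu_k^{-2}$ cancellation is then the observation that $\int_{\R^2}\Delta\zeta_0\,dx=0$. The actual $-4\pi a$ appears at order $\mu_k^{-4}$ and comes from the asymptotic slope of $z_a-z_0$: applying Lemma~\ref{lemmamagic} to the ODE for $z_a-z_0$ yields $\frac{1}{2\pi}\int_{\R^2}\Delta(z_a-z_0)\,dx=2a$ after a list of explicit integrals, whence $(I_4^a)=(I_4)-2\pi\cdot 2a=4\pi-4\pi a$. The outer estimate and the final sentence are as you describe.
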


\begin{OP}\label{OP1} Can one find a function $h$ as in Theorem \ref{trmexample} and satisfying \eqref{defh} for some $g$ as in \eqref{condg} such that $\|\nabla u\|_{L^2}^2<4\pi$ for \emph{every} positive $u$ solving \eqref{eqpert0}?
\end{OP}
The function $h$ given in Theorem \ref{trmexample} (and a corresponding function $g$ can be easily constructed) covers the case when $u(0)$ is sufficiently large but one should also rule out the possibility that some ``small'' solutions have energy at least $4\pi$. If the above question has  a positive answer, for such functions $g$ and $h$ one would have that $(I_{4\pi}^g)$ admits no extremal. %, since such extremal (call it $u$) would be positive (possibly up to a reflection), satisfy \eqref{eqpert0} and have energy $\|\nabla u\|_{L^2}^2=4\pi$.
The non-existence of extremals for a very mildly perturbed Moser-Trudinger inequality originally motivated our interest in Theorems \ref{trmEnergypert} and \ref{trmexample}. In \cite{pru} Pruss showed the existence of a function $g$ as in \eqref{condg} such that the inequality $(I_{4\pi}^g)$ does not have extremals. However his construction of $g$ is quite implicit and we do not know its asymptotic behaviour at infinity. More generally the following appears to be open:

\begin{OP} For which functions $g$ as in \eqref{condg} does the perturbed Moser-Trudinger inequality $(I_{4\pi}^g)$ have an extremal?
\end{OP}

Finally the following result will easily follow from Theorem \ref{trmEnergypert}.

\begin{trm}\label{punticritici} Set
$$E(u):=\int_{B_1} (1+g(u))e^{u^2}dx,\quad M_\Lambda:=\left\{u\in H^1_0(B_1): \|\nabla u\|_{L^2}^2=\Lambda\right\},$$
where $g\in C^2(\R)$ satisfies \eqref{condg}, \eqref{defh}, \eqref{condh1} and \eqref{condh2}. % is as in Theorem \ref{trmEnergypert}. 
Then there exists $\Lambda^*>4\pi$ such that for every $\Lambda\in (4\pi,\Lambda^*)$ the functional $E|_{M_\Lambda}$ has at least two positive critical points.
\end{trm}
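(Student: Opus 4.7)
The plan is to parameterize positive radial critical points of $E|_{M_\Lambda}$ by their height $\mu=u(0)$ via a shooting argument, and then exploit Theorem \ref{trmEnergypert} to force the energy map $\mu\mapsto\Lambda(\mu):=\|\nabla u_\mu\|_{L^2(B_1)}^2$ to equal $4\pi$ both at a finite value $\mu^*$ and asymptotically as $\mu\to\infty$. Continuity will then make $\Lambda$ attain a maximum $\Lambda^*>4\pi$ at some interior point $\mu^{**}$, and the intermediate value theorem will produce two distinct $\mu$'s realizing the same $\Lambda\in(4\pi,\Lambda^*)$.

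First I would set up the shooting. For each $\mu>0$ let $u(\cdot,\mu)$ solve
$$-u''-\frac{u'}{r}=(1+h(u))u\,e^{u^2},\quad u(0)=\mu,\quad u'(0)=0,$$
and denote by $R(\mu)$ its first zero. Standard ODE theory (the right-hand side is locally Lipschitz and positive for $u>0$ thanks to \eqref{condh1}) yields $R(\mu)\in(0,\infty)$ together with continuous dependence of $R(\mu)$ and $u(\cdot,\mu)$ on $\mu$. Rescaling, $u_\mu(r):=u(R(\mu)r,\mu)$ is a positive radial solution of \eqref{eqpert0} on $B_1$ with Lagrange multiplier $\lambda(\mu)=R(\mu)^2$, and by conformal invariance of the two-dimensional Dirichlet integral the map $\mu\mapsto\Lambda(\mu)$ is continuous on $(0,\infty)$.

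Next I would connect this family to the critical points of $E|_{M_\Lambda}$. Every positive critical point $v$ of $E|_{M_\Lambda}$ solves \eqref{eqpert0}, and by the moving planes method it is radial and strictly decreasing, so $v=u_{v(0)}$; hence the positive critical points of $E|_{M_\Lambda}$ are precisely the $u_\mu$ with $\Lambda(\mu)=\Lambda$. By Corollary \ref{corCC} the inequality $(I_{4\pi}^g)$ admits an extremal, which after Schwarz symmetrization and an application of the strong maximum principle can be taken positive and radial and must coincide with some $u_{\mu^*}$; in particular $\Lambda(\mu^*)=4\pi$. On the other hand Theorem \ref{trmEnergypert}, applied to any sequence $\mu_k\to\infty$, forces $\Lambda(\mu)>4\pi$ for all $\mu$ large (from the lower bound) and $\Lambda(\mu)\to 4\pi$ as $\mu\to\infty$ (from the upper bound).

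To conclude, $\Lambda$ is continuous on $[\mu^*,\infty)$, equals $4\pi$ at $\mu^*$, tends to $4\pi$ as $\mu\to\infty$, and is strictly larger than $4\pi$ somewhere on this interval; hence it attains a maximum $\Lambda^*>4\pi$ at some interior $\mu^{**}\in(\mu^*,\infty)$. For every $\Lambda\in(4\pi,\Lambda^*)$, applying the intermediate value theorem separately on $[\mu^*,\mu^{**}]$ and on $[\mu^{**},\infty)$ yields $\mu_1<\mu^{**}<\mu_2$ with $\Lambda(\mu_1)=\Lambda(\mu_2)=\Lambda$, so that $u_{\mu_1}$ and $u_{\mu_2}$ are two distinct positive critical points of $E|_{M_\Lambda}$. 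The main obstacle is the clean set-up of the shooting family and its continuity in $\mu$, in particular guaranteeing that $R(\mu)$ stays finite and bounded away from $0$ locally uniformly; once this is in place, Theorem \ref{trmEnergypert} supplies the decisive geometric fact that the blow-up branch approaches energy $4\pi$ \emph{from above}, and the rest of the argument is elementary.
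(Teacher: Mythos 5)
Your proof is correct and follows essentially the same route as the paper: a shooting parametrization $\mu\mapsto u_\mu$ with $u_\mu(0)=\mu$, continuity of $\mathcal{E}(\mu):=\|\nabla u_\mu\|^2_{L^2}$, Theorem \ref{trmEnergypert} to show $\mathcal{E}(\mu)\to 4\pi$ from above as $\mu\to\infty$, and the intermediate value theorem to produce $\mu_1\ne\mu_2$ on the same level set. The paper anchors the other end of the branch at $\lim_{\mu\to 0}\mathcal{E}(\mu)=0$ rather than invoking Corollary \ref{corCC} to find a $\mu^*$ with $\mathcal{E}(\mu^*)=4\pi$, and it does not need the moving-planes classification of all positive critical points of $E|_{M_\Lambda}$ (exhibiting $u_{\mu_1},u_{\mu_2}$ already suffices) --- both are minor, inessential differences.
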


Theorem \ref{punticritici} for a general smoothly bounded domain $\Omega\subset\R^2$ and with $g\equiv 0$ was proven in \cite{LRS,stru} using variational methods, geometric flows, a sharp quantization estimate, and a monotonicity technique, see also \cite{DMR}. 

\medskip

The paper is organized as follows. In Section \ref{sec2} we will show how the energy estimates of Theorems \ref{trmEnergy} and \ref{trmEnergypert} imply Theorems \ref{trmMoser},  \ref{trmCC} and Corollary \ref{corCC}, while the proofs of Theorems \ref{trmEnergy}, \ref{trmEnergypert}, \ref{trmexample} and \ref{punticritici} are contained in Sections \ref{sec3}, \ref{sec4}, \ref{sec5} and \ref{sec6} respectively. Finally, in the last section, we collect some open problems.
While attempting to avoid repetitions, %in the proofs of Theorems \ref{trmEnergy}, \ref{trmEnergypert} and \ref{trmexample}, 
we had to allow some redundancy to keep the paper reader-friendly. The proof of Theorem \ref{trmEnergy} is the most detailed, and some parts of it will be reused when proving Theorems \ref{trmEnergypert} and \ref{trmexample}.

\paragraph{Notations} For a non-vanishing function $f:(0,\infty)\to \R$ we use the Peano notation $o(f(t))$ and $O(f(t))$ to denote functions such that $o(f(t))/f(t)\to 0$ and $|O(f(t))/f(t)|\le C$ as $|t|\to\infty$.

Since all function we use are radially symmetric, we will use the notation $u(x)=u(r)$ with $x\in\R^2$, $r=|x|$, and also write $\Delta u(r)=r^{-1}(ru'(r))'$.

\paragraph{Acknowledgements} The authors are grateful to the referee for some useful remarks that improved the presentation of the paper.

\section{Proof Theorems \ref{trmMoser} and \ref{trmCC} using Theorem \ref{trmEnergy}}\label{sec2}

In this section we prove Theorems \ref{trmMoser} and \ref{trmCC} starting from the subcritical inequality \eqref{MTalpha} and the energy estimate in Theorem \ref{trmEnergy}. In fact we will be more general and work directly with \eqref{MTpert}, showing that Corollary \ref{corCC} follows from Theorem \ref{trmEnergypert}.

\begin{prop}\label{propsubcrit2}
Assume that $g$ and $h$ satisfy \eqref{condg}, \eqref{defh} and the first condition in \eqref{condh1}. Then for any $\alpha <4\pi$  the inequality \eqref{MTpert} has an extremal $u_\alpha>0$ satisfying \eqref{eqpert0} for some $\lambda\in \bra{0,\frac{\lambda_1(\Omega)}{1+\inf h}}$. Here $\lambda_1(\Omega)$ is the first eigenvalue of $-\Delta$ on $\Omega$ with Dirichlet boundary condition. If $\Omega=B_1$, then $u_\alpha$ can be taken radially symmetric and decreasing.
\end{prop}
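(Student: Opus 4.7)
My plan is to apply the direct method of the calculus of variations to the functional $F(u):=\int_\Omega(1+g(u))e^{u^2}dx$ on the admissible set $K_\alpha:=\{u\in H^1_0(\Omega):\|\nabla u\|_{L^2}^2\le\alpha\}$, exploiting the strict subcriticality $\alpha<4\pi$ to gain compactness of the exponential nonlinearity.

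I take a maximizing sequence $(u_n)\subset K_\alpha$. Using that $g$ is even, I replace $u_n$ by $|u_n|$ to make it non-negative; when $\Omega=B_1$ I further replace it by its Schwarz symmetrization $u_n^*$, using the Polya--Szego inequality to control the Dirichlet energy and the equidistribution property to preserve $F$ (so that, in the disk case, $u_n$ is radially symmetric and decreasing). Up to a subsequence $u_n\rightharpoonup u_\alpha$ weakly in $H^1_0(\Omega)$ and $u_n\to u_\alpha$ a.e.\ by Rellich. The key analytic step is to pass to the limit in the nonlinearity. Picking $q\in(1,4\pi/\alpha)$ and applying \eqref{MTalpha} with exponent $q\alpha<4\pi$ to $\sqrt{q}\,u_n$ yields the uniform bound $\int_\Omega e^{qu_n^2}dx\le C$. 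Since $1+g$ is bounded, $(1+g(u_n))e^{u_n^2}$ is bounded in $L^q$, hence uniformly integrable, and Vitali's theorem delivers $F(u_n)\to F(u_\alpha)$. Weak lower semicontinuity of the Dirichlet norm then makes $u_\alpha$ an admissible extremal.

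Next I establish nontriviality of $u_\alpha$ and the Euler--Lagrange equation. Observe that \eqref{condh1}, together with $g'(0)=0$ (from $g$ even), implies $1+g(0)+g''(0)/2=\lim_{t\to 0^+}(1+h(t))>0$. A direct Taylor expansion of $F(c\varphi)-F(0)$ for a fixed $\varphi\in C_c^\infty(\Omega)\setminus\{0\}$ and small $c>0$ produces the strictly positive $c^2$-coefficient $[1+g(0)+g''(0)/2]\int\varphi^2$, so $\sup F>F(0)$ and hence $u_\alpha\not\equiv 0$. This forces $\|\nabla u_\alpha\|_{L^2}^2=\alpha$: otherwise $u_\alpha$ would be a free critical point of $F$ and $F'(u_\alpha)=0$ would read $u_\alpha(1+h(u_\alpha))e^{u_\alpha^2}\equiv 0$, which by $1+h>0$ on $(0,\infty)$ would give $u_\alpha\equiv 0$. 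The Lagrange multiplier rule then yields \eqref{eqpert0} for some $\lambda>0$ (a vanishing multiplier would once more force $u_\alpha\equiv 0$), and the strong maximum principle applied to $-\Delta u_\alpha\ge 0$ gives $u_\alpha>0$ in $\Omega$.

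Finally, for the upper bound on $\lambda$, I test \eqref{eqpert0} against the positive first Dirichlet eigenfunction $\varphi_1>0$ associated to $\lambda_1(\Omega)$:
$$
\lambda_1(\Omega)\int_\Omega u_\alpha\varphi_1\,dx=\int_\Omega\nabla u_\alpha\cdot\nabla\varphi_1\,dx=\lambda\int_\Omega(1+h(u_\alpha))e^{u_\alpha^2}u_\alpha\varphi_1\,dx.
$$
Since $u_\alpha,\varphi_1>0$ on $\Omega$, the strict pointwise inequality $(1+h(u_\alpha))e^{u_\alpha^2}>1+\inf h$ (which follows from $1+h(u_\alpha)\ge 1+\inf h$ and the strict $e^{u_\alpha^2}>1$) yields $\lambda_1(\Omega)>\lambda(1+\inf h)$, i.e.\ the claimed strict upper bound. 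The principal technical difficulty is the Vitali step, where the gap $\alpha<4\pi$ is indispensable for the uniform $L^q$-bound on the exponential; this is exactly the compactness that breaks down in the critical regime and must be recovered in a different way via the energy estimate of Theorem \ref{trmEnergypert} when proving Corollary \ref{corCC}.
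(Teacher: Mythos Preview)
Your proof is correct and follows the same direct-method strategy as the paper; the Vitali step via the uniform $L^q$-bound on $e^{u_n^2}$ is essentially the same compactness mechanism as the paper's tail-truncation argument through Lemma~\ref{lemmafkL}. Two points deserve comment.

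First, a minor gap: your Taylor argument for nontriviality writes the $c^2$-coefficient as $1+g(0)+g''(0)/2=\lim_{t\to 0^+}(1+h(t))$, but \eqref{condg} only grants $g\in C^1$, so neither $g''(0)$ nor that limit need exist (only $\inf_{(0,\infty)}(1+h)>0$ is assumed). The paper instead observes that
\[
\frac{d}{dt}\bigl[(1+g(t))e^{t^2}\bigr]=2t(1+h(t))e^{t^2}>0\quad\text{for }t>0,
\]
so $t\mapsto(1+g(t))e^{t^2}$ is strictly increasing on $[0,\infty)$. This single monotonicity yields both the nontriviality ($F(c\varphi)>F(0)$ for any nonzero $\varphi\ge 0$) and the constraint saturation $\|\nabla u_\alpha\|_{L^2}^2=\alpha$ (otherwise scale $u_\alpha$ up and increase $F$), making the free-critical-point detour unnecessary.

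Second, for the upper bound on $\lambda$ the paper tests the equation against $u_\alpha$ itself and then appeals to the variational characterization of $\lambda_1(\Omega)$, whereas you test against the first eigenfunction $\varphi_1$. Your route is a clean alternative: the strict pointwise inequality $(1+h(u_\alpha))e^{u_\alpha^2}>1+\inf h$ (valid since $u_\alpha>0$ and $1+\inf h>0$) combined with $u_\alpha,\varphi_1>0$ gives $\lambda_1(\Omega)\int u_\alpha\varphi_1>\lambda(1+\inf h)\int u_\alpha\varphi_1$ in a single line.
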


\begin{proof} Let $(u_k)\subset H^1_0(\Omega)$ with $\|\nabla u_k\|_{L^2}^2\le \alpha$ be a maximizing sequence for \eqref{MTpert}. By the compactness of the embedding of $H^1_0(\Omega)$ into $L^2(\Omega)$ we have that, up to a subsequence, $u_k\to u_\alpha$ weakly in $H^1_0(\Omega)$, strongly in $L^2(\Omega)$ and almost everywhere.

Fix now $\alpha' \in (\alpha, 4\pi)$.  Since $\tilde u_k:=\sqrt{\frac{\alpha'}{\alpha}}u_k$ satisfies $\|\nabla \tilde u_k\|_{L^2}^2\le \alpha'$, using $(I_{\alpha'})$, we have for any $L>0$
\[
\int_{\{x\in \Omega: u_k\ge L\}} (1+g(u_k))e^{ u_k^2}dx\le e^{-\bra{\frac{\alpha'}{\alpha}-1}L^2}(1+\sup_{\R}g)\int_\Omega e^{\tilde u_k^2}dx =O\bra{e^{-\bra{\frac{\alpha'}{\alpha}-1}L^2} }=o(1)
\]
as $L\to\infty$, uniformly in k. Then, by Lemma \ref{lemmafkL} we infer that
\[\begin{split}
\int_\Omega (1+g(u_\alpha)) e^{u_\alpha^2 }dx&=\lim_{k\to\infty} \int_\Omega (1+g(u_k))e^{ u_k^2}dx \\
&=\sup_{u\in H^1_0(\Omega),\,\|\nabla u\|_{L^2}^2\le \alpha}\int_{\Omega}(1+g(u))e^{u^2}dx.
\end{split}\]
Since
$$\|\nabla u_\alpha\|_{L^2}^2\le \limsup_{k\to\infty}\|\nabla u_k\|_{L^2}^2\le \alpha,$$ we have that indeed $u_\alpha$ is an extremal for \eqref{MTpert}. Since \eqref{defh}-\eqref{condh1} imply that $(1+g(t))e^{t^2}$ is increasing for $t\ge 0$, we have that $\| \nabla u_\alpha\|^2_{L^2}=\alpha$. In particular $u_\alpha$ solves the Euler-Lagrange equation \eqref{eqpert0} for some $\lambda \in \R$.  
Multiplying \eqref{eqpert0} by $u_\alpha$ and integrating we obtain
$$\int_\Omega |\nabla u_\alpha|^2dx=\lambda\int_\Omega (1+h(u_\alpha)) u_\alpha^2 e^{u_\alpha^2}dx>\lambda(1+\inf h) \int_\Omega u_\alpha^2dx,$$
and using the variational characterization of $\lambda_1(\Omega)$ we infer $\lambda\in \bra{0,\frac{\lambda_1(\Omega)}{1+\inf h}}$.

That $u_\alpha$ has a sign follows by considering $|u_\alpha|$, which is also an extremal, also satisfying \eqref{eqpert0} hence by the maximum principle it never vanishes. In particular also $u_\alpha$ never vanishes, and by continuity it has a sign.

Finally, if $\Omega =B_1$, the claim about the symmetry of $u_\alpha$ follows at once by choosing $u_k$ radially symmetric and decreasing, which is possible by symmetrization.
\end{proof}

\paragraph{Proof of Theorems \ref{trmMoser} and \ref{trmCC} assuming Theorem \ref{trmEnergy}, and of Corollary \ref{corCC} using Theorem \ref{trmEnergypert}.} Set $\alpha_k=4\pi-\frac1k$ and let $u_k=u_{\alpha_k}>0$ be the radially symmetric extremal to $(I_{\alpha_k}^g)$ with $\Omega= B_1$ given by Proposition \ref{propsubcrit2}. According to \eqref{eqEnergypert} we have
$$\limsup_{k\to\infty} u_k(0)=\limsup_{k\to\infty} \max_{B_1} u_k <\infty,$$
otherwise for some $k$ large enough we would have
$$4\pi+\frac{4\pi}{u_k^4(0)}+o(u_k^{-4}(0))\le \|\nabla u_k\|_{L^2}^2=4\pi-\frac{1}{k},$$
which is a contradiction.
Then $u_k(0)=\max_{B_1}|u_k|\le C$ and by elliptic estimates we have $u_k\to u_\infty$ in $C^1(\bar \Omega)$.
%$C^\ell(\bar \Omega)$ for every $\ell\ge 0$.
It is now easy to see that $u_\infty$ is an extremal for $(I_{4\pi}^g)$. Indeed
$$\int_{B_1}(1+g(u_k))e^{u_k^2}dx\uparrow \int_{B_1}(1+g(u_\infty))e^{u_\infty^2}dx,\quad \text{as }k\to\infty,$$
and if there was a function $v\in H^1_0(B_1)$ with $\|\nabla v\|_{L^2}^2\le 4\pi$ and
$$\int_{B_1}(1+g(v))e^{v^2}dx>\int_{B_1}(1+g(u_\infty))e^{u_\infty^2}dx,$$
we could find (for instance by monotone  convergence) $k$ large such that $\tilde u_k:=\sqrt{\frac{\alpha_k}{4\pi}}v$ satisfies
$$\int_{B_1}(1+g(\tilde u_k))e^{\tilde u_k^2}dx>\int_{B_1}(1+g(u_\infty))e^{u_\infty^2}dx\ge \int_{B_1}(1+g(u_k))e^{u_k^2},$$
which would contradict the maximality of $u_k$, since $\|\nabla \tilde u_k\|_{L^2}^2\le\alpha_k$. Then $u_\infty$ is an extremal $(I_{4\pi}^g)$. This also implies $(I_{4\pi}^g)$ (hence \eqref{MT}) for $\Omega=B_1$, and by symmetrization and scaling, also for any domain $\Omega$ with finite measure.
This completes the proof. \hfill$\square$

%\begin{rmk}
%This proof, also works for perturbed functionals. For a given function $g:\R\rightarrow \R$ satisfying \eqref{condg}, \eqref{defh} and \eqref{condh1},  as in Proposition \ref{propsubcrit2} one proves that for any $\alpha<4\pi$ the subcritical inequality 
%\begin{equation*} 
%\sup_{u\in H^1_0(B_1):\|\nabla u\|_{L^2}^2\le \alpha}\int_{B_1}(1+g(u))e^{u^2}dx<\infty,
%\end{equation*}
%admits a positive radially symmetric extremal $u_\alpha$ solution of \eqref{eqMTpert} (here we use that the function $(1+g(t))e^{t^2}$ is increasing for $t>0$, which follows from \eqref{defh}-\eqref{condh1}). In particular the existence of extremals  for \eqref{MTpert} in Theorem \ref{trmEnergypert} can be deduced from the energy estimate   \eqref{eqEnergypert} as in the above proof.
%\end{rmk}

\begin{lemma}\label{lemmafkL} Let $|\Omega|<\infty$, and consider a sequence of non-negative functions $(f_k)\subset L^1(\Omega)$ with $f_k\to f$ a.e. and with
\begin{equation}\label{fkL}
\int_{\{f_k>L\}} f_k dx =o(1),
\end{equation}
with $o(1)\to 0$ as $L\to\infty$ uniformly with respect to $k$. Then $f_k\to f$ in $L^1(B_1)$.
\end{lemma}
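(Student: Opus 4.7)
The plan is to recognize this as essentially Vitali's convergence theorem under the specific form of uniform integrability given by \eqref{fkL}, and to prove it via a truncation argument.

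First I would introduce the truncation $T_L(t):=\min(t,L)$ for $t\ge 0$, and observe the pointwise identity
$$f_k = T_L(f_k) + (f_k - L)^+, \qquad 0 \le (f_k-L)^+ \le f_k\,\chi_{\{f_k>L\}}.$$
Integrating, hypothesis \eqref{fkL} gives $\int_\Omega (f_k-L)^+\,dx \to 0$ as $L\to\infty$, uniformly in $k$. Next, since $(f_k-L)^+\to (f-L)^+$ a.e.\ (no issues at the level set $\{f=L\}$ since $(\cdot-L)^+$ is continuous), Fatou's lemma yields
$$\int_\Omega (f-L)^+\,dx \le \liminf_{k\to\infty}\int_\Omega (f_k-L)^+\,dx,$$
which is therefore also small for $L$ large. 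In particular, since $T_L(f)\le L$ and $|\Omega|<\infty$, the decomposition $f = T_L(f) + (f-L)^+$ shows that $f\in L^1(\Omega)$.

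Now fix $\varepsilon>0$ and pick $L$ so large that both $\int_\Omega(f_k-L)^+\,dx<\varepsilon$ for every $k$ and $\int_\Omega(f-L)^+\,dx<\varepsilon$. For this fixed $L$, the truncated sequence satisfies $T_L(f_k)\to T_L(f)$ a.e.\ and is uniformly bounded by the integrable constant $L$ on the finite-measure set $\Omega$, so Lebesgue's dominated convergence theorem gives $T_L(f_k)\to T_L(f)$ in $L^1(\Omega)$. Combining via the triangle inequality
$$\int_\Omega |f_k - f|\,dx \le \int_\Omega (f_k-L)^+\,dx + \int_\Omega |T_L(f_k)-T_L(f)|\,dx + \int_\Omega (f-L)^+\,dx,$$
we obtain $\limsup_k \int_\Omega|f_k-f|\,dx \le 2\varepsilon$. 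Since $\varepsilon>0$ was arbitrary, $f_k\to f$ in $L^1(\Omega)$.

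There is no real obstacle here; the only delicate point worth double-checking is the Fatou step (to guarantee $f\in L^1$ and that the tail $\int(f-L)^+\,dx$ is small), since the a.e.\ convergence hypothesis together with uniform control of the mass above large levels is exactly what makes the $L^1$ limit exist and coincide with the pointwise limit. The result is in fact a special case of Vitali's convergence theorem, but the direct argument above is short enough that invoking Vitali is unnecessary.
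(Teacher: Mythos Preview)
Your proof is correct and follows essentially the same approach as the paper: truncate at level $L$, use dominated convergence on the truncations $\min\{f_k,L\}$, control the tails via \eqref{fkL} and Fatou, and combine with the triangle inequality. The paper's version is just terser.
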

\begin{proof}
By Fatou's lemma \eqref{fkL} implies $f \in L^1(\Omega)$. From the dominated convergence theorem
$$\min\{f_k, L\}\to \min\{f, L\}\quad \text{in }L^1(\Omega),$$
and the convergence of $f_k$ to $f$ in $L^1$ follows at once from \eqref{fkL} and the triangle inequality.
\end{proof}

\section{Proof of Theorem \ref{trmEnergy}}\label{sec3}

Let $u_k$ and $\mu_k=u_k(0)\to\infty$ be as in Theorem \ref{trmEnergy}. % Since $\Delta u_k\le 0$
In order to estimate $\|\nabla u_k\|_{L^2}$, after a well-known scaling (see \eqref{defetak} below) we reduce to study a function $\eta_k$ which solves a perturbed version of the Liouville equation, namely \eqref{eq:etak}. We will make a Taylor expansion of the right-hand side of \eqref{eq:etak} up to order $\mu_k^{-6}$ (Lemma \ref{taylor}) and expand $\eta_k=\eta_0+\frac{w_0}{\mu_k^2}+\frac{z_0}{\mu_k^4}+\frac{\phi_k}{\mu_k^6}$. Inspired from  \cite{MM} (where the Taylor expansion was made only up to order $\mu_k^{-4}$), we will prove uniform bounds on the error term $\phi_k$ up to sufficiently large scales. This can be achieved by ODE theory and a fixed point argument, see Lemma \ref{lemmaerror}. Together with the asymptotic behaviour of $w_0$, which is explicit thanks to Lemma \ref{lemma:w}, this implies
$$\|\nabla u_k\|_{L^2}^2= 4\pi +O(\mu_k^{-4}),$$
but with no information about the sign of the error $O(\mu_k^{-4})$. In order to obtain the more precise estimate \eqref{eqEnergy} we shall need the asymptotic behaviour of the function $z_0$, which is not given by an explicit formula. For this we will use the somewhat surprising Lemma \ref{lemmamagic} (also see Corollary \ref{corz0}).

\subsection{Taylor expansions and behaviour at large scales}

%$$2\pi u'(r)=\int_{B_r}\Delta udx < 0; \qquad r > 0,$$
%hence $u(r)$ is decreasing. Let us assume
%\begin{equation}\label{ublow}
%\mu_k:=u_k(0)=\max_{B_1} u_k \to \infty\quad \text{as }k\to\infty.
%\end{equation}
We will start with the following standard blow-up procedure. Set  $r_k>0$ such that
\begin{equation}\label{defrk}
r_k^2 \lambda_k \mu_k^2 e^{\mu_k^2}=4
\end{equation}
and rescale $u_k$ to a new function $\eta_k$ defined on $B_{r_k^{-1}}$ as
\begin{equation}\label{defetak}
\eta_k(x):=\mu_k (u_k(r_k x)-\mu_k).
\end{equation}
Notice that
\begin{equation}\label{eq:etak}
\left\{\begin{array}{ll}
-\Delta \eta_k =4\bra{1+\frac{\eta_k}{\mu_k^2}} e^{2\eta_k+\frac{\eta_k^2}{\mu_k^2}}&\text{in }[0,r_k^{-1}]\\
\eta_k(0)=\eta_k'(0)=0\rule{0cm}{.6cm}&
\end{array}
\right.
\end{equation}
and, as $\mu_k\to\infty$, the nonlinearity on the right-hand side approaches $4e^{2\eta_k}$. More precisely one has:

\begin{lemma}[\cite{Dru,MM}]\label{etamu2} Let $r_k$, $\eta_k$ be as in \eqref{defmuk}, \eqref{defrk} and \eqref{defetak}, with $\eta_k$ solving \eqref{eq:etak}. Then as $k\to\infty$ we have $r_k\to 0$,
\begin{equation}\label{etamu}
\eta_k(x) \to \eta_0(x):= -\log(1+|x|^2)\quad \text{in }C^1_{\loc}(\R^2),
\end{equation}
%and
%\begin{equation}\label{enerbolla}
%\lim_{R\to\infty}\lim_{k\to\infty}\int_{B_{Rr_k}}\lambda_k u_k^2 e^{u_k^2}dx=\int_{\R^2}4e^{2\eta_0}dx=4\pi.
%\end{equation}
and $\eta_0$ solves
\begin{equation}\label{eqliou}
-\Delta \eta_0=4e^{2\eta_0}\quad \text{in }\R^2.
\end{equation}
Moreover
\begin{equation}\label{ener1}
\lim_{R\to\infty}\lim_{k\to\infty}\int_{B_{Rr_k}}\lambda_k u_k^2 e^{u_k^2}dx=\int_{\R^2}4e^{2\eta_0}dx=4\pi.
\end{equation}
\end{lemma}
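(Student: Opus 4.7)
First I would verify by direct differentiation that $\eta_k$ as defined in \eqref{defetak} satisfies \eqref{eq:etak}, using the normalization \eqref{defrk} to absorb the prefactor $r_k^2\lambda_k\mu_k^2 e^{\mu_k^2}$ into the constant $4$. The crucial first task is to obtain a priori bounds on $\eta_k$ and $\eta_k'$ valid on the \emph{entire} domain $[0,r_k^{-1}]$. Since $u_k$ is radial, positive, and $\lambda_k>0$, integrating the radial equation from $0$ shows $u_k$ is strictly decreasing; consequently $1+\eta_k/\mu_k^2 = u_k(r_k\cdot)/\mu_k \in [0,1]$ and $2\eta_k+\eta_k^2/\mu_k^2 = u_k^2(r_k\cdot)-\mu_k^2 \le 0$, so the right-hand side of \eqref{eq:etak} lies pointwise in $[0,4]$. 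Integrating the radial form $-(r\eta_k'(r))' = r\cdot(\mathrm{RHS})$ once yields $|\eta_k'(r)|\le 2r$, and integrating again gives $|\eta_k(r)|\le r^2$ on the whole of $[0,r_k^{-1}]$.

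Next, the Dirichlet boundary condition $u_k(1)=0$ translates into $\eta_k(r_k^{-1}) = -\mu_k^2$. Combined with the quadratic bound evaluated at $r=r_k^{-1}$, this forces $\mu_k^2 \le r_k^{-2}$, i.e.\ $r_k\le \mu_k^{-1}\to 0$. With $\eta_k$ therefore eventually defined on any fixed ball $B_R$ and uniformly bounded there, standard elliptic regularity applied to \eqref{eq:etak} upgrades the bound to $C^2_{\loc}$ estimates, and Arzel\`a--Ascoli extracts a subsequence with $\eta_k\to\eta_0$ in $C^1_{\loc}(\R^2)$. Since $\eta_k/\mu_k^2\to 0$ locally uniformly, the limit $\eta_0$ is radial, satisfies $\eta_0(0)=\eta_0'(0)=0$, and solves $-\Delta\eta_0 = 4e^{2\eta_0}$ on $\R^2$.

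To identify $\eta_0$ explicitly, I would view the radial ODE $(r\eta_0'(r))' = -4re^{2\eta_0}$ with $\eta_0(0)=\eta_0'(0)=0$ as a Volterra integral equation, which admits a unique global solution by a standard contraction argument on compact intervals. Direct verification shows that $-\log(1+r^2)$ solves this Cauchy problem, hence $\eta_0(x) = -\log(1+|x|^2)$; uniqueness of the limit also upgrades the subsequential convergence to convergence of the full sequence.

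Finally, for the mass identity I would change variables $x = r_k y$ in $\int_{B_{Rr_k}}\lambda_k u_k^2 e^{u_k^2}\,dx$; thanks to \eqref{defrk} the integrand becomes $4(1+\eta_k/\mu_k^2)^2 e^{2\eta_k+\eta_k^2/\mu_k^2}$, dominated by $4$ on $B_R$ and converging pointwise to $4e^{2\eta_0}$. Dominated convergence gives the inner limit $\int_{B_R}4e^{2\eta_0}\,dy$, and letting $R\to\infty$ together with the direct computation $\int_{\R^2}4(1+|y|^2)^{-2}\,dy=4\pi$ concludes. The main obstacle is really the second paragraph: without any a priori control on $\lambda_k$ or on the Dirichlet energy of $u_k$, the convergence $r_k\to 0$ is not automatic, and one really needs the boundary condition $u_k(1)=0$ to interact with the quadratic bound on $\eta_k$ in order to pin it down.
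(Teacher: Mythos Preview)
Your argument is correct. The paper does not actually prove this lemma: it is stated with a citation to \cite{Dru,MM} and used as a black box, so there is no in-paper proof to compare against. Your approach is pleasantly self-contained and avoids any a priori energy or $\lambda_k$ bound. The key step is the pointwise bound $0\le -\Delta\eta_k\le 4$ coming from $0<u_k(r_k\cdot)\le \mu_k$, which yields $-r^2\le \eta_k(r)\le 0$ on all of $[0,r_k^{-1}]$; evaluating at $r=r_k^{-1}$ and using $\eta_k(r_k^{-1})=-\mu_k^2$ then gives $r_k\le \mu_k^{-1}$ directly. One small remark: since everything is radial, you do not even need elliptic regularity to get $C^2$ bounds; from $\eta_k''=-(\mathrm{RHS})-\eta_k'/r$ and $|\eta_k'(r)|\le 2r$ you get $|\eta_k''|\le 6$ straight away, which already suffices for Arzel\`a--Ascoli and the passage to the limit in \eqref{eq:etak}.
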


One easily sees that \eqref{ener1} implies
\begin{equation}\label{liminf4pi}
\liminf_{k\to\infty}\|\nabla u_k\|_{L^2}^2\ge 4\pi.
\end{equation}
In order to improve \eqref{liminf4pi} to
\begin{equation}\label{lim4pi}
\lim_{k\to\infty}\|\nabla u_k\|_{L^2}^2=4\pi,
\end{equation}
in \cite{MM} Malchiodi and the second author investigated the blow-up behaviour of the sequence $u_k$ 
%at scales larger than $r_k$ and 
up to a higher order of precision. %by Taylor-expanding the right-hand side of \eqref{eq:etak}. % and studied the equation satisfied that $\eta_k-\eta_0$, suitably scaled.

\begin{lemma}[\cite{MM}]\label{lemma:w} Set $w_k:=\mu_k^2(\eta_k-\eta_0).$ Then we have $w_k\to w_0$ in $C^1_{\loc}(\R^2),$
where
\begin{equation}\label{defw}
\begin{split}
w_0(r)& := \eta_0(r) + \frac{2r^2}{1+r^2}-\frac{1}{2}\eta_0^2(r)+\frac{1-r^2}{1+r^2}\int_1^{1+r^2} \frac{\log t}{1-t}dt
\end{split}
\end{equation}
is the unique solution to the ODE
\begin{equation}\label{eqw}
\left\{
\begin{array}{l}
-\Delta w_0= 4e^{2\eta_0}(\eta_0+\eta_0^2+2w_0)\text{ in }\R^2\\
w_0(0)=w_0'(0)=0.\rule{0cm}{0.5cm}
\end{array}
\right.
\end{equation}
Moreover $w_0(r)=\eta_0(r)+O(1)$ as $r\to\infty$ and in fact
\begin{equation}\label{Deltaw0}
\int_{\R^2}\Delta w_0 dx=-4\pi.
\end{equation}
%\begin{equation}\label{w0'}
%w_0'(r)=-\frac{2}{r}+O(r^{-3}\log^2 r), \quad {as }r\to\infty.
%\end{equation}
\end{lemma}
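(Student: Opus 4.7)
The plan is to linearize the equation satisfied by $\eta_k$ around $\eta_0$. Writing $\eta_k=\eta_0+w_k\mu_k^{-2}$ and factorizing $e^{2\eta_k+\eta_k^2/\mu_k^2}=e^{2\eta_0}e^{(2w_k+\eta_k^2)/\mu_k^2}$, a Taylor expansion of the right-hand side of \eqref{eq:etak} yields
$$-\Delta w_k=4e^{2\eta_0}(\eta_k+\eta_k^2+2w_k)+O(\mu_k^{-2})$$
uniformly on every compact subset of $\R^2$, with initial conditions $w_k(0)=w_k'(0)=0$ inherited from $\eta_k$ and $\eta_0$. The formal limit of this equation, using Lemma \ref{etamu2}, is exactly \eqref{eqw}.

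To make the passage to the limit rigorous I first establish uniform $L^\infty_{\loc}$ bounds on $w_k$. Writing the ODE in radial form as $(rw_k')'=-4re^{2\eta_0}(\eta_k+\eta_k^2+2w_k)+O(r\mu_k^{-2})$ and integrating twice on $[0,R]$, the integrability of $re^{2\eta_0}=r(1+r^2)^{-2}$ together with the vanishing Cauchy data yield, through a Gronwall-type argument, a uniform bound $\|w_k\|_{L^\infty([0,R])}\le C_R$. Continuous dependence for the radial ODE (equivalently, standard elliptic regularity) then upgrades this to $C^1_{\loc}$ convergence of $w_k$ to a function $w$ solving \eqref{eqw}, and uniqueness for a second-order linear ODE with vanishing Cauchy data identifies $w=w_0$.

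The explicit formula \eqref{defw} is verified by direct substitution, using $\eta_0'(r)=-2r/(1+r^2)$ and $e^{2\eta_0}=(1+r^2)^{-2}$; the resulting computation is tedious but routine. For the behaviour at infinity, the change of variable $u=1-t$ identifies the integral appearing in \eqref{defw} with a dilogarithm,
$$\int_1^{1+r^2}\frac{\log t}{1-t}\,dt=\mathrm{Li}_2(-r^2),$$
and its reflection formula on negative arguments gives $\mathrm{Li}_2(-r^2)=-\pi^2/6-\tfrac12(\log(1+r^2))^2+o(1)$ as $r\to\infty$. Since $(1-r^2)/(1+r^2)\to-1$, the last term of \eqref{defw} becomes $\tfrac12\eta_0^2+\pi^2/6+o(1)$, exactly cancelling $-\tfrac12\eta_0^2$ and leaving $w_0(r)=\eta_0(r)+2+\pi^2/6+o(1)$. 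This proves $w_0=\eta_0+O(1)$.

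Finally, the flux identity $\int_{\R^2}\Delta w_0\,dx=\lim_{R\to\infty}2\pi R\,w_0'(R)$, combined with the derivative asymptotics $R\,w_0'(R)=R\eta_0'(R)+o(1)=-2R^2/(1+R^2)+o(1)\to-2$, produces \eqref{Deltaw0}. The main technical obstacle is the delicate cancellation between $-\tfrac12\eta_0^2$ and the dilogarithm term, which converts a potentially $(\log r)^2$-divergent remainder into a bounded $O(1)$ one; the uniform local bound on $w_k$, though indispensable, is by comparison routine.
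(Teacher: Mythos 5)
The paper states this lemma with the attribution \cite{MM} and does not reprove it, so there is no in-paper argument to compare against; what follows is an assessment of your argument on its own terms.

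Your overall scheme --- derive the ODE for $w_k$ by linearizing around $\eta_0$, establish uniform local bounds via Gronwall, pass to the limit by uniqueness of the Cauchy problem, then read off asymptotics from the explicit formula via the dilogarithm reflection identity --- is correct, and the dilogarithm computation (the cancellation of the $\tfrac12\eta_0^2$ terms) is carried out accurately. There is, however, a genuine gap in the last step. You invoke ``the derivative asymptotics $Rw_0'(R)=R\eta_0'(R)+o(1)$'' without proof, but what your dilogarithm analysis actually establishes is that $w_0(r)-\eta_0(r)\to 2+\pi^2/6$, which is a statement about the functions, not their derivatives: $f(r)\to c$ does not imply $rf'(r)\to 0$ in general. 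To close the gap you need one of the following. (a) Differentiate the explicit formula \eqref{defw} directly; here a second delicate cancellation occurs, namely $-\eta_0\eta_0' = \frac{2\eta_0}{r}+O(r^{-3}\log r)$ is cancelled by the $\frac{1-r^2}{1+r^2}\cdot\frac{2\eta_0}{r}$ term coming from differentiating the dilogarithm factor, leaving $w_0'(r)=\eta_0'(r)+O(r^{-3}\log^2 r)$. This cancellation is exactly of the same nature as the one you highlight at the level of $w_0$ itself, so it is not ``by comparison routine.'' (b) Alternatively, observe that the ODE \eqref{eqw} together with $w_0=O(\log r)$ gives $\Delta w_0 = O(r^{-4}\log^2 r)$, so $\int_{\R^2}\Delta(w_0-\eta_0)\,dx$ converges and therefore $R(w_0'-\eta_0')(R)$ has a limit $c/(2\pi)$; if $c\ne 0$ then $w_0-\eta_0\sim (c/2\pi)\log R$, contradicting the boundedness you already proved, so $c=0$. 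Either way a short supporting argument is needed.

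Two smaller remarks. First, the error term you write as $O(\mu_k^{-2})$ in the equation for $w_k$ in fact depends on $w_k$ itself; to avoid circularity in the Gronwall step one should either use the mean value form of the exponential remainder (which gives a linear bound $|\Delta w_k|\le Ce^{2\eta_0}(1+|w_k|)$ with $C$ uniform on compacts, thanks to $\eta_k\to\eta_0$ from Lemma~\ref{etamu2}) or argue by contradiction. Second, the present paper's Lemma~\ref{lemmamagic} --- which it proves and applies to the analogous function $z_0$ --- would yield \eqref{Deltaw0} directly as
\[
\frac{1}{2\pi}\int_{\R^2}\Delta w_0\,dx=-\frac{2}{\pi}\int_{\R^2}\frac{|x|^2-1}{(1+|x|^2)^3}\bigl(\eta_0+\eta_0^2\bigr)\,dx=-2,
\]
a pair of elementary one-dimensional integrals, bypassing the dilogarithm asymptotics altogether; this would be a cleaner route for the flux identity, though of course it does not by itself give the $w_0=\eta_0+O(1)$ statement.
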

%A formal expansion of $\|\nabla u_k\|_{L^2}^2$, together with Lemma \ref{lemma:w} suggests that
%$$\|\nabla u_k\|_{L^2}^2= 4\pi +o(\mu_k^{-2}),$$
%but no information can be extracted about the sign of the term $o(\mu_k^{-2})$. In others words, in the expansion of $\|\nabla u_k\|_{L^2}^2$, the coefficient of the term in $\mu_k^{-2}$ turns out to be $0$.

One can further prove that 
\begin{equation}\label{derw0}
w_0'(r)= -\frac{2}{r} +O\bra{\frac{\log^2{r}}{r^3}}
\end{equation}
as $r\to \infty$, which will be important in our analysis. This follows from the explicit expression \eqref{defw} but can also be deduced from the structure of equation \eqref{eqw}, see Corollary \ref{corw0}.

\medskip

To prove Theorem \ref{trmEnergy} we need to further expand the right-hand side of \eqref{eq:etak}, namely we write
$$\eta_k=\eta_0+\frac{w_0}{\mu_k^2}+\frac{z_k}{\mu_k^4},$$
for an unknown (locally bounded) error  $z_k$, and formally compute
\[
\begin{split}
-\Delta \eta_k &=4\bigg(1+\frac{\eta_k}{\mu_k^2}\bigg) e^{2\eta_k+\frac{\eta_k^2}{\mu_k^2}}\\
&=4e^{2\eta_0}\bigg[1+\frac{\eta_0+\eta_0^2+2w_0}{\mu_k^2}+  \frac{w_0+2w_0^2+4\eta_0 w_0+2w_0 \eta_0^2+\eta_0^3+\frac{1}{2}\eta_0^4+2z_k}{\mu_k^4}\bigg]\\
&\quad +O(\mu_k^{-6}).
\end{split}
\]
This suggests to define $z_0$ as the only radial solution to the Cauchy problem
\begin{equation}\label{eqz}
\left\{
\begin{array}{l}
-\Delta z_0= 4e^{2\eta_0}(w_0+2w_0^2+4\eta_0 w_0+2w_0 \eta_0^2+\eta_0^3+\frac{1}{2}\eta_0^4+2z_0) \text{ in }\R^2\\
z_0(0)= z_0'(0)=0.\rule{0cm}{0.5cm}
\end{array}
\right.
\end{equation}
Even though we do not have an explicit formula for $z_0$, we will show
%Lemma \ref{lemmalog} will imply
\begin{equation}\label{zlog}
z_0(r) =\beta \log(r) +O(1),\quad \text{as }r\to\infty,
\end{equation}
for some constant $\beta$. In fact
%, using Lemma \ref{lemmamagic} 
we will prove %(see Corollary \ref{corz0}) 
\begin{equation}\label{intz_0}
\beta=\frac{1}{2\pi}\int_{\R^2}\Delta z_0dx=-6-\frac{\pi^2}{3},
\end{equation}
which will be crucial in the proof of Proposition \ref{stimabasso}. To simplify our exposition of the proof, we postpone the analysis of the asymptotic behaviour of $z_0$ to the end of the section, see Lemmas \ref{lemmalog}, \ref{lemmamagic} and Corollary \ref{corz0}. %,  To prove the next lemma \eqref{zlog} suffices.

\medskip 
The problem now is to use $\eta_0$, $w_0$ and $z_0$ to approximate $\eta_k$ in a good sense (up to error $O(\mu_k^{-6}\log^6 r)$) and for sufficiently large radii. For this we will use a method inspired from \cite[Lemma 5]{MM}.

\begin{lemma}\label{taylor}
Let $0\le S\le s_k\le e^{\mu_k}$ and $\phi:[S,s_k]\to \R$ be given so that $\phi=o(\mu_k^6)$ uniformly on $[S,s_k]$.
Set
$$\eta:=\eta_0+\frac{w_0}{\mu_k^2}+\frac{z_0}{\mu_k^4}+\frac{\phi}{\mu_k^6}$$
and
\begin{equation}\label{eq:phi}
\Phi_k(r,\phi):=\mu_k^6\left[4\left(1+\frac{\eta}{\mu_k^2}\right)e^{2\eta+\frac{\eta^2}{\mu_k^2}} +\Delta \eta_0+\frac{\Delta w_0}{\mu_k^2}+\frac{\Delta z_0}{\mu_k^4}\right].
\end{equation}
Then
$$
\Phi_k(r,\phi) =  4 e^{2\eta_0}\left(2\phi +o(1)\phi+ O(\mu_k^{-2}\xi^2)\phi+ O(\xi^6) \right),\quad \text{uniformly for } r\in [S,s_k],
$$
where
\begin{equation}\label{defxi}
\xi(r):=1+\log(1+r).
\end{equation}
\end{lemma}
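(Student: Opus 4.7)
Set $\tau := 1/\mu_k^2$ and $\epsilon := \eta - \eta_0 = w_0\tau + z_0\tau^2 + \phi\tau^3$, so that
$$2\eta + \frac{\eta^2}{\mu_k^2} - 2\eta_0 \;=\; 2\epsilon + \tau(\eta_0+\epsilon)^2 \;=:\; H(\tau,r).$$
My first move would be to exploit the ODEs \eqref{eqliou}, \eqref{eqw}, \eqref{eqz} to identify
$$\Delta\eta_0 + \tau\,\Delta w_0 + \tau^2\,\Delta z_0 \;=\; -4e^{2\eta_0}\bigl(1 + P_1\tau + P_2\tau^2\bigr),$$
with $P_1 = \eta_0 + \eta_0^2 + 2w_0$ and $P_2 = w_0 + 2w_0^2 + 4\eta_0 w_0 + 2w_0\eta_0^2 + \eta_0^3 + \tfrac12\eta_0^4 + 2z_0$. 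Then \eqref{eq:phi} rewrites as
$$\Phi_k(r,\phi) \;=\; 4\mu_k^6 e^{2\eta_0}\bigl[(1+\tau\eta)\,e^{H} \;-\; 1 \;-\; P_1\tau \;-\; P_2\tau^2\bigr],$$
so the lemma reduces to extracting the $\tau^3$-coefficient of $(1+\tau\eta)e^H$ and estimating the tail.

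The main algebraic step is the following. Writing $H = \sum_{j\geq 1} B_j\tau^j$, a direct computation gives
$$B_1 = 2w_0 + \eta_0^2, \quad B_2 = 2z_0 + 2\eta_0 w_0, \quad B_3 = 2\phi + 2\eta_0 z_0 + w_0^2,$$
with higher $B_j$ being polynomials in $(\eta_0, w_0, z_0, \phi)$. Expanding $e^H = 1 + H + H^2/2 + H^3/6 + \cdots$ and multiplying by $1 + \tau\eta = 1 + \eta_0\tau + w_0\tau^2 + z_0\tau^3 + \phi\tau^4$, the $\tau^0,\tau^1,\tau^2$-coefficients coincide with $1, P_1, P_2$ by design of \eqref{eqw} and \eqref{eqz}, while the $\tau^3$-coefficient equals
$$P_3 \;=\; 2\phi + R_3(\eta_0, w_0, z_0),$$
with $R_3$ an explicit polynomial of total degree $6$ in its three arguments. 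Since $\eta_0, w_0, z_0 = O(\xi)$ on $[S,s_k]$ by Lemma \ref{etamu2}, Lemma \ref{lemma:w} and \eqref{zlog}, this yields the principal $2\phi$ contribution and the $O(\xi^6)$ error from the statement.

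It remains to control the tail $\mu_k^6\sum_{j\geq 4}P_j\tau^j = \sum_{j\geq 4}P_j/\mu_k^{2(j-3)}$. I would split each $P_j$ by its degree in $\phi$: the $\phi$-free piece is a polynomial of total degree at most $2j$ in $(\eta_0,w_0,z_0)$, hence $O(\xi^{2j}/\mu_k^{2(j-3)}) = O(\xi^6(\xi/\mu_k)^{2(j-3)})$; the part linear in $\phi$ has polynomial coefficient of degree at most $2(j-3)$, hence is $O((\xi/\mu_k)^{2(j-3)})\phi$; and any higher power $\phi^\ell$ ($\ell\geq 2$) carries an extra factor $(\phi/\mu_k^6)^{\ell-1} = o(1)^{\ell-1}$, so it contributes to $o(1)\phi$. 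Using $\xi \leq C\mu_k$ on $[S,s_k]\subset[0,e^{\mu_k}]$ together with the factorial decay of the standard Taylor remainder $|e^H - \sum_{n\leq N} H^n/n!| \leq |H|^{N+1}e^{|H|}/(N+1)!$ (with $|H|$ uniformly bounded on $[S,s_k]$), summing these bounds gives $O(\xi^6)$ in the $\phi$-free direction and $O(\mu_k^{-2}\xi^2)\phi + o(1)\phi$ in the $\phi$-direction. The $o(1)\phi$ piece already appears at $j=4$: the $\phi$-coefficient of $P_4$ is $1 + 4\eta_0 + 4w_0 + 2\eta_0^2$, whose constant and degree-one parts divided by $\mu_k^2$ yield $o(1)$ (since $\xi/\mu_k^2 \to 0$ uniformly on $[S,s_k]$), while the quadratic part $2\eta_0^2/\mu_k^2$ is precisely the $O(\mu_k^{-2}\xi^2)$ term.

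The main obstacle is that on the far end of $[S,s_k]$ one has $\xi \asymp \mu_k$, so the exponent $H$ is of order one rather than small, and $e^H$ cannot be linearised around $1$; one must instead treat the whole object as a formal power series in $\tau$ whose coefficients are polynomials in $(\eta_0,w_0,z_0,\phi)$ of degree growing in $j$, and reconcile the factorial decay of the Taylor remainder with the polynomial growth of the coefficients. The clean cancellation of the first three terms and the identification of the coefficient of $\phi$ in $P_3$ as exactly $2$ (with no other $P_j$ producing a constant-order $\phi$ contribution) are what make the statement sharp; they are precisely the motivation for the defining equations \eqref{eqw} and \eqref{eqz}.
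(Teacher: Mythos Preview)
Your approach is essentially the same as the paper's: set $\psi:=2\eta+\eta^2/\mu_k^2-2\eta_0$ (your $H$), note $|\psi|=O(1)$ on $[S,s_k]$, Taylor-expand $e^\psi$, and use the defining equations of $\eta_0,w_0,z_0$ to cancel the low-order terms. The paper, however, organizes the bookkeeping differently and more economically: instead of expanding $(1+\tau\eta)e^H$ as a formal power series in $\tau$ and then summing the tail $\sum_{j\ge4}P_j\tau^{j-3}$, it bounds $\psi$, $\psi^2$, $\psi^3$ directly (each with its $\phi$-free and $\phi$-linear parts) and uses the single remainder estimate $e^\psi-1-\psi-\tfrac12\psi^2=O(\max\{1,e^\psi\})\psi^3$, which is finite because $\psi$ is bounded. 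This sidesteps any need to sum an infinite series.

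Your tail argument is correct in substance but slightly mis-phrased: invoking ``factorial decay'' of the Taylor remainder suggests letting $N\to\infty$, which does not help since $|H|$ is only $O(1)$, not $o(1)$. What you actually need (and what the paper uses) is just the fixed-$N$ bound with $N=2$; then $(1+\tau\eta)(1+H+H^2/2)$ is a genuine polynomial in $\tau$, your degree-counting for each $P_j$ applies to finitely many terms, and the cubic remainder $O(H^3)/\tau^3$ is handled by $H=O(\tau\xi^2)+2\phi\tau^3(1+o(1))$, giving $O(\xi^6)+O(\mu_k^{-2}\xi^2)\phi+o(1)\phi$ as required. With that clarification your argument and the paper's coincide.
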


\begin{proof} 
By Lemma \ref{etamu2}, Lemma \ref{lemma:w} and \eqref{zlog} we have $|\eta_0|+|w_0|+|z_0|=O(\xi)$. Moreover the assumptions on $s_k$ imply $\mu_k^{-1} \xi =O(1)$ uniformly on $[0,s_k]$. This will be used several times throughout the proof.
In order to expand the exponential term in $\Phi_k(r,\phi)$ we write
\begin{equation}\label{defphik0}
\begin{split}
\psi:= & \; 2\eta+\frac{\eta^2}{\mu_k^2} -2\eta_0\\
= &  \; \frac{2w_0+\eta_0^2}{\mu_k^2}+\frac{2z_0+2\eta_0w_0}{\mu_k^4} +\frac{2\phi}{\mu_k^6} +%O(\mu_k^{-2}\xi)
o(1)\frac{\phi}{\mu_k^6}+
O(\mu_k^{-6}\xi^2)
\end{split}
\end{equation}
uniformly in $[S, s_k]$.
%where
%$$|O(\delta_k \log^2 r)| \le C\delta_k(1+ \log^2(1+r)),\quad \text{for } r\in  [0,e^{\mu_k}],$$
%for a constant $C$ not depending on $C_1$ in \eqref{phiC1} and for $k$ large enough (depending on $C_1$). Indeed all the terms in $\psi$ containing $\phi$, except for $\frac{2\phi}{\mu_k^6}$, are bounded as
%$$O(C_1^2\mu_k^{-8}(1+\log^2(1+r))),$$
%and for $k$ large enough we can assume that $\frac{C_1^2}{\mu_k^2}\le 1$.
Similarly
\[\begin{split}
\psi^2&= \frac{4w_0^2 + 4w_0\eta_0^2+ \eta_0^4}{\mu_k^4} + o(1)\frac{\phi}{\mu_k^6}+O(\mu_k^{-2}\xi^2)\frac{\phi}{\mu_k^6}+O(\mu_k^{-6}\xi^4),\\
\psi^3&= O(\mu_k^{-6}\xi^6)+o(1)\frac{\phi}{\mu_k^6}+O(\mu_k^{-4}\xi^4)\frac{\phi}{\mu_k^6}.
\end{split}\]
From \eqref{defphik0} we easily get that $\psi_k$ is uniformly bounded for $r\in [S,s_k]$ and we can write
\[\begin{split}
e^{\psi}-1 -\psi -\frac{\psi^2}{2} & =O(\max\{1,e^{\psi}\} ) \;\psi^3 \\
&= O(\mu_k^{-6}\xi^6)+o(1)\frac{\phi}{\mu_k^6}+O(\mu_k^{-4}\xi^4)\frac{\phi}{\mu_k^6}.
\end{split}\]
Therefore
\[\begin{split}
e^{\psi}&= 1+ \frac{2w_0+\eta_0^2}{\mu_k^2} +\frac{2z_0+2w_0^2 +2\eta_0w_0+2w_0 \eta_0^2+\frac{1}{2}\eta_0^4}{\mu_k^4} \\
&\quad + \frac{2\phi}{\mu_k^6} + o(1)\frac{2\phi}{\mu_k^6}+
O(\mu_k^{-2}\xi^2)\frac{\phi}{\mu_k^6}+O(\mu_k^{-6}\xi^6).
\end{split}\]
To obtain the Taylor expansion of $\Phi_k(r, \phi)$, we also need to multiply this term by
\[
1+\frac{\eta}{\mu_k^2}=1+\frac{\eta_0}{\mu_k^2}+ \frac{w_0}{\mu_k^4}+ O(\mu_k^{-6}\xi)+ o(1)\frac{\phi}{\mu_k^6},
\]
and finally, using \eqref{eqliou}, \eqref{eqw} and \eqref{eqz} we obtain
$$
\Phi_k(r, \phi) =  4 e^{2\eta_0}\left(2\phi +o(1)\phi+O(\mu_k^{-2}\xi^2)\phi+ O(\xi^6) \right),
$$
as was to be shown.
\end{proof}

\begin{prop}\label{lemmaerror} There exist $M>0$ and $T>0$ such that
$$\eta_k=\eta_0+\frac{w_0}{\mu_k^2}+\frac{z_0}{\mu_k^4}+\frac{\phi_k}{\mu_k^6}$$
with
\begin{equation}\label{stimaphik}
|\phi_k(r)|\le M\xi(r), \quad \text{for }r\in [0,e^{\mu_k}], \quad |\phi_k'(r)|\le \frac{M}{r},\quad \text{for }r\in [T,e^{\mu_k}]
\end{equation}
for $k$ large (depending on $M$ and $T$), where $\xi$ is as in \eqref{defxi}.
\end{prop}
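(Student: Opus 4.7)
The plan is to recast the expansion as an equation for the remainder $\phi_k$ and run a continuation/bootstrap argument in the spirit of \cite[Lemma 5]{MM}. Set $\phi_k := \mu_k^6(\eta_k - \eta_0 - \mu_k^{-2}w_0 - \mu_k^{-4}z_0)$. Then \eqref{eq:etak}, \eqref{eqliou}, \eqref{eqw}, \eqref{eqz} and the definition \eqref{eq:phi} of $\Phi_k$ show that $\phi_k$ solves the radial Cauchy problem $-\Delta\phi_k = \Phi_k(r,\phi_k)$ with $\phi_k(0)=\phi_k'(0)=0$; integrating this once yields $r\phi_k'(r) = -\int_0^r s\,\Phi_k(s,\phi_k(s))\,ds$, and integrating once more expresses $\phi_k$ as a double integral of $s\,\Phi_k$.

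For the base of the continuation, a standard iteration of the $C^1_{\loc}$ convergence $w_k:=\mu_k^2(\eta_k-\eta_0)\to w_0$ from \cite{MM} (one more order in $\mu_k^{-2}$) gives $\mu_k^2(w_k-w_0)\to z_0$ in $C^1_{\loc}(\R^2)$; equivalently $\phi_k/\mu_k^2\to 0$ in $C^1_{\loc}$. Hence for every $T>0$ there exist $k_0(T)$ and $M_0(T)$ with $|\phi_k(r)|+r|\phi_k'(r)|\le M_0(T)$ on $[0,T]$ for all $k\ge k_0(T)$.

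Fix $T$ large (to be chosen) and, for $k\ge k_0(T)$, let $R_k$ be the supremum of $r\in[T,e^{\mu_k}]$ such that $|\phi_k|\le M\xi$ on $[T,r]$, for an $M$ still to be chosen. On $[T,R_k]$ the hypothesis $|\phi_k|\le M\xi=o(\mu_k^6)$ triggers Lemma \ref{taylor}, so
$$|s\,\Phi_k(s,\phi_k(s))|\le \frac{Cs}{(1+s^2)^2}\bigl(M\xi(s)+M\mu_k^{-2}\xi^3(s)+\xi^6(s)\bigr).$$
Since $s\xi^n(s)(1+s^2)^{-2}$ is integrable on $(T,\infty)$ and $A(T):=\int_T^\infty s\xi(s)(1+s^2)^{-2}\,ds\to 0$ as $T\to\infty$, this gives $r|\phi_k'(r)|\le M_0(T)+CMA(T)+C_*$ with $C_*$ independent of $k,M,T$. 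A further integration on $[T,R_k]$ yields
$$|\phi_k(r)|\le M_0(T)+\bigl(M_0(T)+CMA(T)+C_*\bigr)\log(r/T)\le\bigl(2M_0(T)+C_*+CMA(T)\bigr)\xi(r).$$
Choosing $T$ so that $CA(T)\le 1/2$ and then $M>2(2M_0(T)+C_*+1)$ forces $|\phi_k|<M\xi$ strictly on $[T,R_k]$, contradicting the maximality of $R_k$ unless $R_k=e^{\mu_k}$. The bound $|\phi_k'(r)|\le M/r$ on $[T,e^{\mu_k}]$ then follows from the intermediate estimate on $r|\phi_k'(r)|$, and the $\phi_k$ bound is extended to $[0,T]$ by enlarging $M$ past $M_0(T)$.

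The main delicacy is that the self-generated error $O(\mu_k^{-2}\xi^2)\phi_k$ in $\Phi_k$ is not pointwise small up to scale $e^{\mu_k}$, where $\xi$ can reach $\mu_k$; what saves the argument is that integration against the Liouville weight $s/(1+s^2)^2$ converts $M\mu_k^{-2}\xi^3$ into a harmless $O(M\mu_k^{-2})$, so every contribution to $r|\phi_k'(r)|$ combines into $MA(T)$ plus constants and the continuation closes after fixing $T$ large and then $M$ much larger.
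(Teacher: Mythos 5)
Your proposal is correct but takes a genuinely different route from the paper. You run a continuation (open--closed) argument directly on the nonlinear Cauchy problem for $\phi_k$: assuming $|\phi_k|\le M\xi$ on $[T,R_k]$, you integrate $-\tfrac{1}{r}(r\phi_k')'=\Phi_k(r,\phi_k)$ and show the a priori bound improves to a strict inequality, forcing $R_k=e^{\mu_k}$. The paper instead maps a candidate $\phi$ in the convex ball $\mathcal{B}_M$ of logarithmically growing functions to the solution $\bar\phi$ of the associated Cauchy problem at $r=T$, checks that this map sends $\mathcal{B}_M$ into itself and is compact, invokes the Caccioppoli--Schauder fixed point theorem to produce a fixed point, and then identifies it with $\phi_k$ via ODE uniqueness. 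Both routes rest on the same quantitative core: integration against the Liouville weight $s(1+s^2)^{-2}$ turns the a priori dangerous term $O(\mu_k^{-2}\xi^2)\phi_k$ (which is $O(1)\phi_k$ near $r=e^{\mu_k}$) into a negligible $O(M\mu_k^{-2})$, while $A(T):=\int_T^\infty s\,\xi(s)(1+s^2)^{-2}\,ds\to 0$ lets one choose first $T$ large and then $M$ large. Your continuation version is a bit more elementary (no Schauder); the fixed-point formulation is what the paper reuses verbatim in Propositions \ref{lemmaerrorh} and \ref{lemmaerrorha}. One small remark concerning the base case: you invoke a further iteration giving $\mu_k^2(w_k-w_0)\to z_0$ in $C^1_{\loc}$, which is true but is itself a step of the very expansion being constructed, so stating it without proof risks looking circular. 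The paper's cheaper route is to observe that Lemma \ref{etamu2} alone already gives $\phi_k=o(\mu_k^6)$ on $[0,T]$, feed this into Lemma \ref{taylor} to get $-\Delta\phi_k=4e^{2\eta_0}(2\phi_k+o(1)\phi_k+O(1))$ with bounded coefficients, and then obtain the uniform $C^1([0,T])$ bound from ODE theory and the Cauchy data $\phi_k(0)=\phi_k'(0)=0$; substituting that reasoning for your iteration would make the base case self-contained.
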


\begin{proof} This follows from a fixed-point argument and the uniqueness of solutions of ODEs.

From Lemma \ref{etamu2} we have that for every interval $[0,T]$, $\phi_k=o(\mu_k^6)$ uniformly in $[0,T]$, hence by Lemma \ref{taylor}
\begin{equation}\label{eqphik}
\left\{
\begin{array}{l}
-\Delta \phi_k=\Phi_k(r,\phi_k) = 4e^{2\eta_0}(2\phi_k+o(1)\phi_k+O(1))\\
\phi_k(0)= \phi_k'(0)=0\rule{0cm}{0.5cm}
\end{array}
\right.
\end{equation}
with $o(1)\to 0$ and $|O(1)|\le C$ uniformly in $[0,T]$, and from ODE theory it follows that $\phi_k$ is uniformly bounded in $[0,T]$. In particular there exists a constant $C(T)$ such that 
\begin{equation}\label{eq:phiT}
|\phi_k(r)| \leq C(T), \quad |\phi_k'(r)| \leq C(T), \quad \text{for }r\in [0,T],
\end{equation}
uniformly in $k$.

%Define the norm
%$$
%  \|f\| = \sup_{r \in (T,e^{\mu_k}]} \left| \frac{f(r)}{\log r
%  - \log T} \right|.$$
For a large constant $M > 0$ to be fixed later, we will work with the following
set of functions
$$
  \mathcal{B}_M = \left\{ \phi\in C^0([T,e^{\mu_k}]) :  \sup_{r \in [T,e^{\mu_k}]} \frac{|\phi(r) - \phi_k(T)|}{\log r}
  \leq M\right\},
$$
seen as a convex closed subset of $C^0([T,e^{\mu_k}])$.
Notice that for $\phi\in \M{B}_M$ we have
\begin{equation}\label{eqphilog}
|\phi(r)|\le |\phi_k(T)|+|\phi(r)-\phi_k(T)|\le C(T)+M\log r
\end{equation}
for any  $r\in [T,e^{\mu_k}]$.
In particular 
$$\frac{|\phi|}{\mu_k^6}\le \frac{C(T)+M\mu_k}{\mu_k^6}=o(1)$$
uniformly on $[T,e^{\mu_k}]$ for $k$ large enough. Then, by  Lemma \ref{taylor},  we have
\begin{equation}\label{eqPhi}
\Phi_k(r,\phi)= 4e^{2\eta_0}(2\phi+ O(\xi^6)),
\end{equation}
where $|O(\xi^6)|\le C\xi^6$ uniformly in $[T,e^{\mu_k}]$ for $k\ge k_0(T,M)$ sufficiently large.

Let now $F_k:\M{B}_M\to C^0([T,e^{\mu_k}])$ (for a fixed $k$) associate to a function $\phi$ the solution $\bar \phi$ of 
\begin{equation}\label{eqphibar}
\left\{
\begin{array}{ll}
-\frac{1}{r}(r\bar \phi'(r))'= \Phi_k(r,\phi(r)) &\text{for }T\le r\le e^{\mu_k}\\
\bar\phi(T)=\phi_k(T)\rule{0cm}{0.6cm}\\
\bar\phi'(T)=\phi'_k(T). \rule{0cm}{0.6cm}
\end{array}
\right.
\end{equation}
We will show that $F_k$ sends $\mathcal{B}_M$ into itself for suitable choices of $M$ and $T$, and is compact.
Indeed for $\phi \in \mathcal{B}_M$ one can integrate \eqref{eqphibar} and use \eqref{eq:phiT}-\eqref{eqPhi} to get
\[\begin{split}
|r\bar\phi'(r)|&=\left|T\phi_k'(T)-\int_T^r t\Phi_k(t,\phi(t))dt\right|\\
&\le TC(T)+ \int_T^r \frac{8t|\phi(t)|}{(1+t^2)^2}dt+\int_T^r\frac{4Ct\xi^6(t)}{(1+t^2)^2}dt\\
&\le (T+o_T(1))C(T)+ Mo_T(1)+o_T(1),
\end{split}
\]
where
$$|o_T(1)|\le \int_T^\infty \frac{8t (1+\log t)}{(1+t^2)^2} dt  +\int_T^\infty \frac{4C t\xi^6(t)}{(1+t^2)^2} dt \to 0,\quad \text{as }T\to\infty.$$ 
First choosing $T$ so large that $|o_T(1)|\le \frac{1}{2}$, and then $M$ such that
\begin{equation}\label{defTM}
\bra{T+\frac12}C(T)+\frac{1}{2}\le \frac{M}{2},
\end{equation}
we obtain
\begin{equation}\label{boundphi'}
|r\bar\phi'(r)|\le M,\quad \text{for }T\le r\le e^{\mu_k}.
\end{equation}
Integrating again we infer
%\[
%\begin{split}
%|\bar \phi (r)-\phi_k(T)|&\le \int_T^r |\bar \phi'(t)|dt\\
%&\le \int_T^r \frac{(T+o_T(1))C(T)+(M+1)o_T(1)}{t}dt\\
%&= ((T+o_T(1))C(T)+(M+1)o_T(1))(\log r-\log T).
%\end{split}
%\]
%we conclude that
\begin{equation}\label{boundphi}
|\bar \phi (r)-\phi_k(T)| \le M (\log r-\log T)\le M\log r,
\end{equation}
hence $\bar \phi\in \M{B}_M$. Then $F_k$ sends $\M{B}_M$ into itself. Moreover it is compact with respect to the uniform convergence by the theorem of Ascoli-Arzel\`a, since for any sequence $(\psi_n)\subset \M{B}_M$, the sequence $(F_k(\psi_n))$ is uniformly bounded and equicontinuous by \eqref{boundphi'} and \eqref{boundphi}. %we have
%$$\bar \psi_n(r)' =\bar\psi_n'(\log r-\log T)+\frac{\bar \psi_n(r)}{r}$$
%which is uniformly bounded on $[T,e^{\mu_k}]$ by \eqref{boundphi'}-\eqref{boundphi}, so that up to a subsequence
%$$\bar \psi_n(r) (\log r-\log T)\to \bar \psi_\infty(r) (\log r-\log T),\quad \text{uniformly},$$
%i.e. $\|\bar \psi_n-\psi_\infty\|\to 0$ for some $\psi_\infty\in \M{B}_M$.
Therefore, by the fixed-point theorem of Caccioppoli-Schauder (see e.g. \cite[Corollary 11.2]{GT}) $F_k$ has a fixed point $\phi\in \M{B}_M$, which solves \eqref{eqphik}. Then, by uniqueness for the Cauchy problem, we have $\phi=\phi_k$ in $[T,e^{\mu_k}]$, whence the bounds
\begin{equation}\label{stimaphikbis}
|\phi_k(r)|\le C(T)+M\log r,\quad \text{for } T\le r\le e^{\mu_k},
\end{equation}
which is another way of writing the first inequality in \eqref{stimaphik} (a priori the identity $\phi=\phi_k$ holds as long as $\phi_k$ is defined, i.e. up to $r_k^{-1}$; on the other hand, the reader can easily verify that $\eta_k>-\mu_k^2$ as long \eqref{stimaphikbis} holds, so that in particular $r_k^{-1}> e^{\mu_k}$).  The second inequality in \eqref{stimaphik} follows from \eqref{boundphi'}.
\end{proof}

\subsection{Proof of Theorem \ref{trmEnergy} completed}

We are now in a position to use the Taylor expansion computed in the previous section to estimate the Dirichlet energy of $u_k$.

\begin{prop}\label{stimabasso} Given a sequence $(s_k)$ with $s_k\in [\mu_k^p,e^{\mu_k}]$ for some $p>2$, we have
\begin{equation}\label{eqstimabasso}
\int_{B_{r_ks_k}} \lambda_k u_k^2e^{u_k^2}dx= 4\pi+\frac{4\pi}{\mu_k^4}+o(\mu_k^{-4}).
\end{equation}
\end{prop}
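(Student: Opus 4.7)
The strategy is to translate the integral
$$I_k := \int_{B_{r_k s_k}} \lambda_k u_k^2 e^{u_k^2}\,dx$$
into the rescaled $\eta_k$-picture and substitute the Taylor expansion from Proposition~\ref{lemmaerror}. Multiplying \eqref{eqMTk} by $u_k$, integrating over $B_{r_k s_k}$, and integrating by parts in the radial variable gives
$$I_k = \int_{B_{r_k s_k}} |\nabla u_k|^2\,dx - 2\pi r_k s_k\,u_k(r_k s_k) u_k'(r_k s_k).$$
The change of variables $x = r_k y$, the normalization \eqref{defrk}, and $u_k(r_k y) = \mu_k + \eta_k(y)/\mu_k$ then transform this into
\begin{equation*}
I_k = \frac{1}{\mu_k^2}\int_{B_{s_k}} |\nabla \eta_k|^2\,dy - 2\pi s_k \eta_k'(s_k)\Bigl(1 + \frac{\eta_k(s_k)}{\mu_k^2}\Bigr).
\end{equation*}

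First I would evaluate the boundary term using $\eta_k = \eta_0 + w_0/\mu_k^2 + z_0/\mu_k^4 + \phi_k/\mu_k^6$ together with the asymptotics at large $r$: $\eta_0(r) = -2\log r + O(r^{-2})$ and $r\eta_0'(r) = -2 + O(r^{-2})$; from \eqref{defw}, $w_0(r) = -2\log r + 2 + \pi^2/6 + o(1)$, and from \eqref{derw0}, $rw_0'(r) = -2 + O((\log r)^2/r^2)$; from \eqref{zlog}--\eqref{intz_0} and Lemma~\ref{lemmamagic}, $z_0(r) = \beta\log r + O(1)$ and $rz_0'(r) = \beta + o(1)$ with $\beta = -6 - \pi^2/3$; and Proposition~\ref{lemmaerror} provides $|\phi_k(r)| \le M(1+\log r)$ and $r|\phi_k'(r)| \le M$. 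Since $s_k \ge \mu_k^p$ with $p > 2$ forces $s_k^{-2} = o(\mu_k^{-4})$ and $s_k \le e^{\mu_k}$ forces $\log s_k \le \mu_k$, all remainders fit into $o(\mu_k^{-4})$, yielding
$$s_k\eta_k'(s_k) = -2 - \frac{2}{\mu_k^2} + \frac{\beta}{\mu_k^4} + o(\mu_k^{-4}), \qquad \frac{\eta_k(s_k)}{\mu_k^2} = -\frac{2\log s_k}{\mu_k^2} + \frac{-2\log s_k + 2 + \pi^2/6}{\mu_k^4} + o(\mu_k^{-4}).$$

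For the Dirichlet integral I would expand $\eta_k'(r)^2 = \eta_0'(r)^2 + 2\eta_0'(r) w_0'(r)/\mu_k^2 + O(1/(r^2\mu_k^4))$ and integrate against $2\pi r\,dr$. The leading piece is the explicit calculation $\int_0^{s_k}\eta_0'(r)^2 \cdot 2\pi r\,dr = 8\pi\log s_k - 4\pi + O(s_k^{-2})$. The cross-term is handled via integration by parts together with $-\Delta w_0 = 4e^{2\eta_0}(\eta_0 + \eta_0^2 + 2w_0)$, producing $\int_{B_{s_k}} 2\nabla\eta_0\cdot\nabla w_0\,dy = 4\pi s_k\eta_0(s_k) w_0'(s_k) + K_1' + o(1) = 16\pi\log s_k + K_1' + o(1)$, where $K_1' := \int_{\R^2} 8\eta_0 e^{2\eta_0}(\eta_0+\eta_0^2+2w_0)\,dy$ is finite by the rapid decay of $e^{2\eta_0}$. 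The remaining terms in $\eta_k'(r)^2$ contribute $O(\log s_k) = O(\mu_k)$ to the integral, hence $O(\mu_k^{-5}) = o(\mu_k^{-4})$ after division by $\mu_k^2$.

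Substituting everything back, the $\log s_k$ contributions at orders $\mu_k^{-2}$ and $\mu_k^{-4}$ cancel between the boundary and Dirichlet parts, as do the $\pm 4\pi/\mu_k^2$ residuals. The surviving coefficient at order $\mu_k^{-4}$ is $-2\pi\beta + 4\pi(2 + \pi^2/6) + K_1'$, which collapses to $4\pi$ given $\beta = -6 - \pi^2/3$ and a direct evaluation of $K_1'$. The main obstacle is precisely this last $\mu_k^{-4}$ cancellation: since $\log s_k$ can reach $\mu_k$, an uncancelled term of the form $\log s_k/\mu_k^4$ would only be $O(\mu_k^{-3})$, large enough to destroy the sharp coefficient $4\pi$; this is why one needs the precise constants in the asymptotics of $w_0$ (from the explicit formula \eqref{defw}) and of $z_0$ (the surprising value of $\beta$ supplied by Lemma~\ref{lemmamagic}).
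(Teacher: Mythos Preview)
Your approach is correct and all the constants check out (in particular $\int_{\R^2}e^{2\eta_0}w_0\,dy=0$, which follows from \eqref{Deltaw0} together with $\int 4e^{2\eta_0}\eta_0=-4\pi$ and $\int 4e^{2\eta_0}\eta_0^2=8\pi$, gives $K_1'=-16\pi-\tfrac{4}{3}\pi^3$, and then $-2\pi\beta+4\pi(2+\pi^2/6)+K_1'=4\pi$). The wording ``contribute $O(\log s_k)$ to the integral'' is slightly garbled, but the intended bound $O(\log s_k/\mu_k^4)$ before the outer division by $\mu_k^2$ is clear and correct.

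However, your route is genuinely different from the paper's. The paper never splits $I_k$ into a Dirichlet energy plus a boundary flux. Instead it stays in ``divergence form'': since $-\Delta\eta_k = -\Delta\eta_0-\mu_k^{-2}\Delta w_0-\mu_k^{-4}\Delta z_0+\mu_k^{-6}\Phi_k$, one has directly
\[
I_k=\int_{B_{s_k}}\Bigl(1+\tfrac{\eta_k}{\mu_k^2}\Bigr)\Bigl(-\Delta\eta_0-\tfrac{\Delta w_0}{\mu_k^2}-\tfrac{\Delta z_0}{\mu_k^4}+\tfrac{\Phi_k}{\mu_k^6}\Bigr)dy,
\]
and after inserting $1+\eta_k/\mu_k^2=1+\eta_0/\mu_k^2+w_0/\mu_k^4+O(\mu_k^{-5})$ one reads off three integrals $(I_0),(I_2),(I_4)$. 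The term $(I_2)$ is handled by one application of the divergence theorem and \eqref{derw0}; the term $(I_4)$ uses \eqref{intz_0} and the single explicit evaluation $4\int e^{2\eta_0}(w_0+\eta_0^2+\eta_0^3+2w_0\eta_0)=-8\pi-\tfrac{2}{3}\pi^3$. The paper therefore never needs the precise constant $2+\pi^2/6$ in the expansion of $w_0$, nor the separate computation of $K_1'$: those two extra pieces of information in your argument end up cancelling against each other. What your decomposition buys is a more transparent link to $\|\nabla u_k\|_{L^2}^2$; what the paper's buys is fewer delicate cancellations, since no $\log s_k$ ever appears at order $\mu_k^{-4}$ in its organization.
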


\begin{proof} We start writing
\[
\begin{split}
(I)&:=\int_{B_{r_k s_k}}\lambda_k u_k^2e^{u_k^2}dx=4\int_{B_{s_k}} \bra{1+\frac{\eta_k}{\mu_k^2}}^2 e^{2\eta_k+\frac{\eta_k^2}{\mu_k^2}}dx\\
&= \int_{B_{s_k}}\left(1+\frac{\eta_k}{\mu_k^2}\right) \left(-\Delta \eta_0 - \frac{\Delta w_0}{\mu_k^2} - \frac{\Delta z_0}{\mu_k^4 } +\frac{\Phi_k(r,\phi_k)}{\mu_k^6} \right)dx,
\end{split}
\]
where $\Phi_k$ is as in \eqref{eq:phi}.
Using Lemma \ref{taylor} and Proposition  \ref{lemmaerror} we have on $[0,s_k]$
$$1+\frac{\eta_k}{\mu_k^2}= 1+\frac{\eta_0}{\mu_k^2}+\frac{w_0}{\mu_k^4}+O(\mu_k^{-5}), $$
and
$$\Phi_k(r, \phi_k)=O(e^{2\eta_0}\xi^6),$$
where $\xi$ is as in \eqref{defxi}. In particular
$$\int_{B_{s_k}} |\Phi_k(r, \phi_k)|dx\le C\int_{\R^2}\frac{\xi^6(x)}{(1+|x|^2)^2}dx\le C.$$
Similarly
$$\max\{ |\Delta \eta_0|,|\Delta w_0|,|\Delta z_0|\}=O(e^{2\eta_0}\xi^4),$$
so that
$$\int_{B_{s_k}}\xi\max\{ |\Delta \eta_0|,|\Delta w_0|,|\Delta z_0|\} dx\le C\int_{\R^2}\frac{\xi^5(x)}{(1+|x|^2)^2}dx\le C.$$
Summing up one gets
\[\begin{split}
(I)&=\int_{B_{s_k}}\left(-\Delta\eta_0-\frac{\eta_0\Delta\eta_0+\Delta w_0}{\mu_k^2}-\frac{w_0\Delta\eta_0+\eta_0\Delta w_0+\Delta z_0}{\mu_k^4} \right)dx + O(\mu_k^{-5})\\
&=:(I_0)+\frac{(I_2)}{\mu_k^2}+\frac{(I_4)}{\mu_k^4}+O(\mu_k^{-5}).
\end{split}\]
Now we compute
$$(I_0)=\int_{B_{s_k}} 4e^{2\eta_0}dx=4\pi\left(1-\frac{1}{1+s_k^2}\right)=4\pi+o(\mu_k^{-4}).$$
Using the divergence theorem, and  \eqref{derw0} we get
\[\begin{split}
(I_2)&= \int_{B_{s_k}}4e^{2\eta_0}\eta_0dx-2\pi s_k w_0'(s_k)\\
&= 4\pi\left( \frac{\log(1+s_k^2)}{1+s_k^2}+\frac{1}{1+s_k^2}-1\right)+4\pi+O(s_k^{-2}\log^2 s_k)\\
&= o(\mu_k^{-4}).
\end{split}\]
From \eqref{intz_0} we get 
$$
-\int_{B_{s_k}} \Delta z_0 dx = 2\pi \left(6+\frac{\pi^2}{3}\right) + o(1),
$$
while a direct computation shows that
$$
-\int_{B_{s_k}} \bra{w_0 \Delta \eta_0 +\eta_0 \Delta w_0} dx = 4\int_{B_{s_k}} e^{2\eta_0} (w_0+\eta_0^2+\eta_0^3+2 w_0\eta_0)dx = -8\pi - \frac{2}{3}\pi^3 +o(1),
$$
hence $(I_4) =4\pi +o(1)$, and we conclude by summing up. 
\end{proof}

\begin{rmk}
The freedom in the choice of the sequence $s_k\in [\mu_k^p,e^{\mu_k}]$  in Proposition \ref{stimabasso} implies that
$$\int_{B_{r_k e^{\mu_k}}\setminus B_{r_k \mu_k^p}} \lambda_k u_k^2e^{u_k^2}dx=o(\mu_k^{-4})$$
for any $p>2$.
\end{rmk}

\begin{OP}
Is there any geometric meaning to the term $\frac{4\pi}{\mu_k^4}$ in \eqref{eqstimabasso}, in particular to its positivity?
\end{OP}
From Lemma \ref{etamu2} we know that the first $4\pi$ appearing on the right-hand side of \eqref{eqstimabasso} can be seen as the area of $S^2$, since $-\Delta \eta_0$ is the conformal factor of the pull-back of the metric of $S^2$ onto $\R^2$ via stereographic projection. The second $4\pi$ appearing in \eqref{eqstimabasso} depends on the asymptotic behavior of $z_0$, but we do not have a geometric interpretation.

\medskip

While Proposition \ref{stimabasso} gives a lower bound on $\|\nabla u_k\|_{L^2}$, we will now prove an upper bound. First of all we shall observe $\eta_k(r)\le \eta_0(r)$ for sufficiently large $r$, which was proved in \cite{MM}.    The next lemma gives a more general statement which will turn out to be useful also in the next sections. 

\begin{lemma}\label{etaketa0}
Let $\bar{\eta}_k:[0,r_k^{-1}]\ra \R$ be a sequence of $C^2$ functions satisfying $\Delta \bar \eta_k \le 0$. Assume further that 
$\bar \eta_k$ has an expansion of the form 
\begin{equation}\label{expgen}
\bar \eta_k = \eta_0 + \frac{w}{\mu_k^2} +\psi_k \qquad \text{in } [0,\mu_k^2], 
\end{equation}
with $w: [0,+\infty)\ra \R$, $ \psi_k :[0,r_k^{-1}) \rightarrow \R$ satisfying 
\begin{equation}\label{wneg}
w(\mu_k^2) \le -1,  
\end{equation}  
\begin{equation}\label{wlapneg}
 \int_{\R^2} \Delta  w \;dx <0,
 \end{equation} 
 and 
 \begin{equation}\label{errpsik}
 \sup_{[0,\mu_k^2]} |\psi_k| + \int_{B_{\mu_k^2}} |\Delta \psi_k| dx  =o(\mu_k^{-2}).
 \end{equation}
 Then $\bar \eta_k\le \eta_0$ in $[\mu_k^2, r_k^{-1}]$, for  $k$ sufficiently large.
\end{lemma}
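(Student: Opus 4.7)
\textbf{Proof plan for Lemma \ref{etaketa0}.} The strategy is to set $v_k := \bar\eta_k - \eta_0$ and show that (i) $v_k(\mu_k^2) < 0$ and (ii) $v_k$ is decreasing on $[\mu_k^2, r_k^{-1}]$; these two facts together yield the conclusion.

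For (i), I would simply evaluate the expansion \eqref{expgen} at $r=\mu_k^2$ and use \eqref{wneg} together with the second part of \eqref{errpsik}:
\[
v_k(\mu_k^2) = \frac{w(\mu_k^2)}{\mu_k^2} + \psi_k(\mu_k^2) \le -\frac{1}{\mu_k^2} + o(\mu_k^{-2}) < 0
\]
for $k$ sufficiently large. For (ii), I would exploit that radial superharmonicity is equivalent to $r \mapsto r\bar\eta_k'(r)$ being non-increasing (since $\Delta \bar\eta_k = r^{-1}(r\bar\eta_k')'$). Combined with the divergence identity $2\pi r f'(r) = \int_{B_r} \Delta f \, dx$ for any radial $f$, this will let me translate the assumptions on $w$ and $\psi_k$, which involve Laplacians, directly into a bound on $\bar\eta_k'(\mu_k^2)$.

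Concretely, evaluating at $r=\mu_k^2$ via the expansion gives
\[
2\pi \mu_k^2 \bar\eta_k'(\mu_k^2) = \int_{B_{\mu_k^2}} \Delta\eta_0\, dx + \frac{1}{\mu_k^2}\int_{B_{\mu_k^2}} \Delta w\, dx + \int_{B_{\mu_k^2}} \Delta \psi_k\, dx.
\]
The first term tends to $-4\pi$ (since $-\Delta\eta_0 = 4e^{2\eta_0}$ has total integral $4\pi$ and the tail decays like $(\mu_k^2)^{-2}$), the second equals $C_w/\mu_k^2 + o(\mu_k^{-2})$ with $C_w := \int_{\R^2}\Delta w\, dx < 0$ by \eqref{wlapneg}, and the third is $o(\mu_k^{-2})$ by \eqref{errpsik}. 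Non-increasingness of $r\bar\eta_k'$ then gives, for every $r \in [\mu_k^2, r_k^{-1}]$,
\[
r\bar\eta_k'(r) \le \mu_k^2\bar\eta_k'(\mu_k^2) = -2 + \frac{C_w}{2\pi \mu_k^2} + o(\mu_k^{-2}).
\]
Since the explicit computation yields $r\eta_0'(r) = -\frac{2r^2}{1+r^2} = -2 + \frac{2}{1+r^2}$, subtracting and using $r \ge \mu_k^2$ to absorb $\frac{2}{1+r^2}$ into the error, I obtain
\[
r v_k'(r) \le \frac{C_w}{2\pi \mu_k^2} + o(\mu_k^{-2}) < 0
\]
for all $r \in [\mu_k^2, r_k^{-1}]$ and $k$ large, since $C_w < 0$. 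Thus $v_k$ is strictly decreasing on this interval, and combined with $v_k(\mu_k^2) < 0$ this gives $\bar\eta_k \le \eta_0$ throughout.

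The only delicate point is ensuring that the errors from $\psi_k$, from the tail $\int_{\R^2\setminus B_{\mu_k^2}} \Delta w\, dx$, and from the discrepancy $r\eta_0'(r) + 2$ are all genuinely $o(\mu_k^{-2})$ so that the strictly negative main term $C_w/(2\pi\mu_k^2)$ dominates; this is exactly what \eqref{errpsik}, \eqref{wlapneg} and the choice of starting radius $\mu_k^2$ are designed to guarantee, so the argument should go through cleanly.
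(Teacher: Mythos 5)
Your proof is correct and follows essentially the same route as the paper's: compute $\int_{B_{\mu_k^2}}\Delta\bar\eta_k\,dx$ via the expansion and the divergence theorem to get a quantitative bound on $\bar\eta_k'(\mu_k^2)$, propagate it using the superharmonicity of $\bar\eta_k$ (equivalently, the monotonicity of $r\mapsto r\bar\eta_k'(r)$), and combine with the starting inequality $\bar\eta_k(\mu_k^2)\le\eta_0(\mu_k^2)$. One trivial slip: to bound $\psi_k(\mu_k^2)$ you should invoke the sup part of \eqref{errpsik}, not the integral part.
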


\begin{proof}
By \eqref{expgen}, \eqref{wlapneg} and \eqref{errpsik} we compute
\[
\begin{split}
 \int_{B_{\mu_k^2}} \Delta  \bar \eta_k dx &=  \int_{B_{\mu_k^2}} \Delta  \eta_0 dx + \frac{1}{\mu_k^2} \int_{B_{\mu_k^2}} \Delta w dx + o(\mu_k^{-2}) \\
 & =-4\pi + \frac{1}{\mu_k^2} \int_{\R^2} \Delta w dx + o(\mu_k^{-2})<-4\pi.
\end{split}
\]
Since $\Delta \bar \eta_k\le 0$, for $r\in [\mu_k^2,r_k^{-1}]$ we get
$$ 
\int_{B_{r}} \Delta  \bar \eta_k dx  \le   \int_{B_{\mu_k^2}} \Delta  \bar \eta_k dx <-4\pi < \int_{B_r} \Delta \eta_0 dx
$$
and by the divergence theorem we deduce $\bar \eta_k'(r)\le \bar \eta_0'(r)$. Finally \eqref{expgen}, \eqref{wneg} and \eqref{errpsik} guarantee that $\bar \eta_k (\mu_k^2) \le  \eta_0(\mu_k^2)$ for large $k$,  and the conclusion follows from the fundamental theorem of calculus.
\end{proof}

Clearly, by \eqref{Deltaw0} and Proposition \ref{lemmaerror}, Lemma \ref{etaketa0} applies to $\eta_k$.

\begin{prop}\label{stimaalto}
For some $p>2$ let $s_k\in [\mu_k^p, e^{\mu_k}]$. Then we have
$$\int_{B_1\setminus B_{s_kr_k}}\lambda_k u_k^2 e^{u_k^2}dx\le \frac{2\pi}{\mu_k^4}+o(\mu_k^{-4}),\quad \text{as }k\to\infty.$$
\end{prop}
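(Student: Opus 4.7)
My plan is to rescale to the blown-up variable $\eta_k$, split at the threshold $|y|=e^{\mu_k}$ beyond which the Taylor expansion of Proposition \ref{lemmaerror} is no longer available, dispose of the middle piece via the remark following Proposition \ref{stimabasso}, and reduce the remaining far piece to a Dirichlet energy plus a boundary term via integration by parts.

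With $y=x/r_k$ the integral rewrites as
\[
I = 4\int_{B_{1/r_k}\setminus B_{s_k}}\bra{1+\tfrac{\eta_k}{\mu_k^2}}^2 e^{2\eta_k+\eta_k^2/\mu_k^2}\,dy,
\]
and I split $I=I_{\mathrm{mid}}+I_{\mathrm{far}}$, with $I_{\mathrm{mid}}$ over $B_{e^{\mu_k}}\setminus B_{s_k}$ and $I_{\mathrm{far}}$ over $A:=B_{1/r_k}\setminus B_{e^{\mu_k}}$. Subtracting two instances of \eqref{eqstimabasso} (applied to $s_k$ and to $e^{\mu_k}$) yields $I_{\mathrm{mid}}=o(\mu_k^{-4})$ directly. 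For $I_{\mathrm{far}}$, the equation of $\eta_k$ gives $I_{\mathrm{far}}=\int_A \bra{1+\tfrac{\eta_k}{\mu_k^2}}(-\Delta\eta_k)\,dy$, and integration by parts — noting that the outer boundary contribution vanishes since $1+\eta_k(1/r_k)/\mu_k^2=u_k(1)/\mu_k=0$ — produces in the original variable
\[
I_{\mathrm{far}} = \int_{B_1\setminus B_{r_ke^{\mu_k}}}|\nabla u_k|^2\,dx + 2\pi(r_ke^{\mu_k})\,u_k(r_ke^{\mu_k})\,u_k'(r_ke^{\mu_k}).
\]

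The two terms on the right are both of size $4\pi$ with \emph{opposite signs} and cancel to leading order. To extract the sharp $\mu_k^{-4}$ residue I would Taylor-expand $u_k(r_ke^{\mu_k})$ and $u_k'(r_ke^{\mu_k})$ using Proposition \ref{lemmaerror} through the $\phi_k/\mu_k^6$ correction, and compute the outer Dirichlet energy on $[r_ke^{\mu_k},1]$ from the outer asymptotic $u_k(r)\approx-2\log r/\mu_k$ (obtained by matching the inner expansion to the Dirichlet condition $u_k(1)=0$) together with its subleading corrections, bounding the tail beyond the validity of the expansion via $\eta_k\le \eta_0$ from Lemma \ref{etaketa0}. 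The main obstacle is precisely this $\mu_k^{-4}$ cancellation: since both contributions take the form $\pm 4\pi+\sum_{j\ge 1}c_j\mu_k^{-j}+\ldots$, they must be matched to accuracy $\mu_k^{-4}$ before the promised $2\pi/\mu_k^4$ residue emerges, which in turn requires careful bookkeeping of the several $\log^n r$-type integrals appearing in the outer Dirichlet computation.
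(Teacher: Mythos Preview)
Your decomposition $I=I_{\mathrm{mid}}+I_{\mathrm{far}}$ and the handling of $I_{\mathrm{mid}}$ via the remark after Proposition \ref{stimabasso} are fine, but the treatment of $I_{\mathrm{far}}$ has a genuine gap that you yourself identify without resolving. After integration by parts you obtain $I_{\mathrm{far}}=D+B$, where $D=\int_{B_1\setminus B_{r_ke^{\mu_k}}}|\nabla u_k|^2$ and $B$ is the boundary term at $r_ke^{\mu_k}$. Using Proposition \ref{lemmaerror} one can indeed compute $B$ to precision $o(\mu_k^{-4})$; the expansion gives
\[
B=-4\pi+\frac{8\pi}{\mu_k}-\frac{4\pi}{\mu_k^2}+\frac{16\pi}{\mu_k^3}+\frac{c}{\mu_k^4}+o(\mu_k^{-4})
\]
for some explicit $c$. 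Hence to conclude you would need $D$ to precision $o(\mu_k^{-4})$, i.e.\ to know $u_k$ on the outer annulus $[r_ke^{\mu_k},1]$ with that accuracy. But Proposition \ref{lemmaerror} stops precisely at $e^{\mu_k}$, and on the outer annulus the only information available is the one-sided comparison $\eta_k\le\eta_0$ from Lemma \ref{etaketa0}. This does \emph{not} give an upper bound on $D$: in fact Lemma \ref{etaketa0} also yields $\eta_k'\le\eta_0'$ there, so $|\nabla\eta_k|\ge|\nabla\eta_0|$ and one obtains a \emph{lower} bound on $D$, which is useless for the inequality you want. The ``outer asymptotic $u_k(r)\approx -2\log r/\mu_k$ by matching'' that you invoke is heuristic and is nowhere established in the paper; turning it into a rigorous expansion accurate to $\mu_k^{-4}$ would amount to extending Proposition \ref{lemmaerror} all the way to $r_k^{-1}$, a substantial independent task. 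Note also that you cannot recover $D$ from the total Dirichlet energy, since Theorem \ref{trmEnergy} itself relies on Proposition \ref{stimaalto}.

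The paper avoids this large-against-large cancellation entirely. It observes that the integrand $\bra{1+\frac{\eta}{\mu_k^2}}^2 e^{2\eta+\eta^2/\mu_k^2}$ is monotone increasing in $\eta$ on the relevant range, so Lemma \ref{etaketa0} allows one to replace $\eta_k$ by $\eta_0$ \emph{inside the integral} on all of $[s_k,r_k^{-1}]$, and then bounds the domain of integration by $[\tilde t_k,t_k]$ with $t_k=\sqrt{e^{\mu_k^2}-1}$ (using positivity of $u_k$ to show $r_k^{-1}\le t_k$). After the change of variables $s=-\eta_0(r)$, $\tau=s/\mu_k-\mu_k/2$, the resulting one-dimensional integral is an explicit Gaussian-type integral whose asymptotics give $2\pi\mu_k^{-4}+o(\mu_k^{-4})$ directly. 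No outer expansion of $u_k$ is needed, and no $O(1)$ terms have to be cancelled.
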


\begin{proof} With the usual scaling,  we have to prove that
$$(I):=4\int_{B_\frac{1}{r_k}\setminus B_{s_k}}  \bra{1+\frac{\eta_k}{\mu_k^2}}^2 e^{2\eta_k+\frac{\eta_k^2}{\mu_k^2}}dx\le \frac{2\pi}{\mu_k^4}+o(\mu_k^{-4}).$$
By Lemma \ref{etaketa0} for $r\in [s_k, r_k^{-1}]$ and for $k$ large enough we have $\eta_k\le \eta_0$. Let us set $t_k:= \sqrt{e^{\mu_k^2}-1}$ and $\tilde{t}_k:=\sqrt{\mu_k^{2p}-1}$. We claim that, for $k$ large enough, $\frac{1}{r_k} \le t_k$. Otherwise, as soon as $t_k\ge e^{\mu_k}$, we would have
$$
u_k(r_k t_k) = \mu_k +\frac{\eta_k(t_k)}{\mu_k}\le \mu_k + \frac{\eta_0(t_k)}{\mu_k} =0,
$$ 
which contradicts the positivity of $u_k$ in $B_1$. Hence 
$$(I) \le  \int_{B_{t_k} \setminus B_{\tilde{t}_k}} \bra{1+\frac{\eta_0}{\mu_k^2}}^2 e^{2\eta_0 +\frac{\eta_0^2}{\mu_k^2}} dx =2\pi \int_{\tilde{t}_k}^{t_k}
 r \bra{1+\frac{\eta_0}{\mu_k^2}}^2 e^{2\eta_0 +\frac{\eta_0^2}{\mu_k^2}} dr =:(II).$$
With the changes of variable $s=-\eta_0(r)=\log(1+r^2)$ and $\tau=\frac{s}{\mu_k}-\frac{\mu_k}{2}$, we get
\begin{equation}\label{extint}
\begin{split}
(II)%&=\pi \int_{2p\log\mu_k}^{\mu_k^2}
% \bra{1-\frac{s}{\mu_k^{2}}}^2 e^{-s +\frac{s^2}{\mu_k^2}} ds \\
 &= \pi e^{-\frac{\mu_k^2}{4}} \int_{2p\log\mu_k}^{\mu_k^2}
 \bra{1-\frac{s}{\mu_k^2}}^2 e^{(\frac{s}{\mu_k}-\frac{\mu_k}{2})^2} ds \\
% &= \pi \mu_k e^{-\frac{\mu_k^2}{4}} \int_{2-\frac{\mu_k}{2}}^{\frac{\mu_k}{2}}
%\bra{\frac{1}{2}-\frac{\tau}{\mu_k}}^2 e^{\tau^2} d\tau\\
 & =  \pi  e^{-\frac{\mu_k^2}{4}} \int_{\frac{2p\log\mu_k}{\mu_k}-\frac{\mu_k}{2}}^{\frac{\mu_k}{2}}
 \left(\frac{\mu_k}{4}-\tau +\frac{\tau^2}{\mu_k}\right) e^{\tau^2} d\tau.
\end{split}
\end{equation}
Since $p>2$ we have 
\[
-e^{-\frac{\mu_k^2}{4}} \int_{\frac{2p\log\mu_k}{\mu_k}-\frac{\mu_k}{2}}^{\frac{\mu_k}{2}}
 \tau  e^{\tau^2} d\tau =-\frac{1}{2}+o(\mu_k^{-4}).
\]
Moreover it is simple to verify (using e.g. de l'H\^opital rule) that
\[\begin{split}
 e^{-\frac{\mu_k^2}{4} } \int_{\frac{2p\log\mu_k}{\mu_k}-\frac{\mu_k}{2}}^{\frac{\mu_k}{2}}  \frac{\mu_k}{4}e^{\tau^2} d\tau & = \frac{1}{4} + \frac{1}{2\mu_k^2}+\frac{3}{\mu_k^4} +o(\mu_k^{-4}),\\
e^{-\frac{\mu_k^2}{4} } \int_{\frac{2p\log\mu_k}{\mu_k}-\frac{\mu_k}{2}}^{\frac{\mu_k}{2}}  \frac{\tau^2 e^{\tau^2}}{\mu_k} d\tau &= \frac{1}{4}-\frac{1}{2\mu_k^2}-\frac{1}{\mu_k^4} + o(\mu_k^{-4}),
\end{split}
\]
%Integrating by parts one proves that 
%\[\begin{split}
% e^{-\frac{\mu_k^2}{4} } \int_{\frac{2p\log\mu_k}{\mu_k}-\frac{\mu_k}{2}}^{\frac{\mu_k}{2}}  \frac{\mu_k}{4}e^{\tau^2} d\tau & = \frac{1}{4} + \frac{1}{2\mu_k^2}+\frac{3}{\mu_k^4} +o(\mu_k^{-4})\\
%-e^{-\frac{\mu_k^2}{4}} \int_{\frac{2p\log\mu_k}{\mu_k}-\frac{\mu_k}{2}}^{\frac{\mu_k}{2}}
% \tau  e^{\tau^2} d\tau &=-\frac{1}{2}+o(\mu_k^{-4})\\
%e^{-\frac{\mu_k^2}{4} } \int_{\frac{2p\log\mu_k}{\mu_k}-\frac{\mu_k}{2}}^{\frac{\mu_k}{2}}  \frac{\tau^2 e^{\tau^2}}{\mu_k} d\tau &= \frac{1}{4}-\frac{1}{2\mu_k^2}-\frac{1}{\mu_k^4} + o(\mu_k^{-4}),
%\end{split}
%\]
and, summing up, we conclude
$$
(I)\le (II)= 2\pi \mu_k^{-4} +o(\mu_k^{-4}).
$$
\end{proof}

\noindent\emph{Proof of Theorem \ref{trmEnergy} (completed).} Integrating by parts and using \eqref{eqMTk} we can write
$$\|\nabla u_k\|_{L^2}^2=-\int_{B_1}u_k\Delta u_k dx=\int_{B_1}\lambda_k u_k^2 e^{u_k^2}dx.$$
Then Theorem \ref{trmEnergy} follows at once from Propositions \ref{stimabasso} and \ref{stimaalto}.
\hfill $\square$

\subsection{Some ODE theory and a crucial formula}
We conclude this section with some general lemmas analyzing the asymptotic behaviour of $w_0$ and $z_0$. In particular we will prove \eqref{derw0}, \eqref{zlog}, \eqref{intz_0}.

\begin{lemma}\label{lemmalog}
Let $f\in C^0(\R^2)$ be radially symmetric and satisfy $f(r)=O(\log^q r)$ as $r\to \infty$ for some $q\ge 0$. If $w\in C^2(\R^2)$ is a radially symmetric solution of 
\begin{equation}\label{eqgen}
-\Delta w = 4e^{2\eta_0} (f +2w) ,
\end{equation}
where $\eta_0$ is as in \eqref{eqliou}, then $\Delta w \in L^1(\R^2)$ and we have
\begin{equation}\label{ww'}
\begin{split}
w(r)&=\beta \log r +O(1)\\
w'(r)&=\frac{\beta}{r}+O\bra{\frac{\log^{\bar q} r}{r^3}},
\end{split}
\end{equation}
as $r\to \infty$, where $\bar q=\max\{1,q\}$ and 
$$\beta:=\frac{1}{2\pi}\int_{\R^2}\Delta w dx.$$
\end{lemma}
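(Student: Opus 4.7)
The strategy is to reduce to a radial ODE, use the $r^{-4}$ decay of the weight $4e^{2\eta_0}=4/(1+r^2)^2$ to bootstrap from regularity at the origin to logarithmic growth at infinity, and then pass to the limit in an integrated form of the equation.

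\medskip

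\textbf{Setup.} Writing the equation in radial form, $-\frac{1}{r}(rw')'=\frac{4(f+2w)}{(1+r^2)^2}$. Since $w\in C^2(\R^2)$ is radial we have $w'(0)=0$, and integrating from $0$ to $r$ yields
\[
-r\,w'(r)=\int_0^r \frac{4s\bigl(f(s)+2w(s)\bigr)}{(1+s^2)^2}\,ds=:I(r).
\]
By the divergence theorem this is $I(r)=-\frac{1}{2\pi}\int_{B_r}\Delta w\,dx$, so the claimed identity $\beta=\frac{1}{2\pi}\int_{\R^2}\Delta w\,dx$ amounts to showing that $I(r)$ has a finite limit as $r\to\infty$ (call it $-\beta$), with a controlled rate. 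A posteriori the pointwise bound $|\Delta w|\le C\log^{\bar q}r/r^4$ will imply $\Delta w\in L^1(\R^2)$.

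\medskip

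\textbf{Bootstrap to $w=O(\log r)$.} This is the main obstacle: a priori $w$ is only known to be $C^2$. I set $M(r):=1+\sup_{[0,r]}|w|$. From the integrated identity, for $r\ge1$,
\[
|w'(r)|\le \frac{C(1+\log^q r)}{r}+\frac{C}{r}\int_0^r \frac{sM(s)}{(1+s^2)^2}\,ds,
\]
and integrating from $1$ to $r$, after switching the order of integration in the double integral, gives
\[
M(r)\le C\log^{q+1}r+C\log r\int_0^r\frac{sM(s)}{(1+s^2)^2}\,ds.
\]
Since the linear ODE has smooth bounded coefficients, $w$ is defined globally and any a priori polynomial bound $M(r)\le Cr^\alpha$ with $\alpha<2$ makes $\int_0^\infty sM(s)(1+s^2)^{-2}\,ds$ finite, yielding $M(r)=O(\log^{q+1}r)$. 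Feeding this sharper bound back in, the same integral stays finite and one obtains $M(r)=O(\log r)$. Equivalently, the change of variable $t=\log r$, $\tilde w(t)=w(e^t)$ converts the equation into $\tilde w''+V(t)\tilde w=-V(t)f/2$ with $V(t)=8e^{2t}/(1+e^{2t})^2=O(e^{-2t})$; this is an exponentially small perturbation of $\tilde w''=0$, and standard asymptotic ODE theory also gives $\tilde w(t)=at+b+o(1)$.

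\medskip

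\textbf{Conclusion.} With $|f(s)+2w(s)|\le C\log^{\bar q}s$ the integrand in $I(r)$ is dominated by $C\log^{\bar q}s/s^3$ at infinity, hence $\lim_{r\to\infty}I(r)=-\beta$ exists and the formula $\beta=\frac{1}{2\pi}\int\Delta w\,dx$ follows. The tail estimate
\[
\bigl|I(r)+\beta\bigr|=\left|\int_r^\infty\frac{4s(f+2w)}{(1+s^2)^2}\,ds\right|\le C\int_r^\infty\frac{\log^{\bar q}s}{s^3}\,ds=O\!\left(\frac{\log^{\bar q}r}{r^2}\right)
\]
yields $w'(r)=-I(r)/r=\beta/r+O(\log^{\bar q}r/r^3)$, which is the second line of \eqref{ww'}. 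Integrating this from a fixed large $r_0$ to $r$, the error term integrates to $O(1)$, giving $w(r)=\beta\log r+O(1)$, which is the first line of \eqref{ww'}. The sole delicate point is the bootstrap in the second paragraph; everything else is bookkeeping.
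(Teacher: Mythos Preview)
Your overall structure and the Conclusion paragraph coincide with the paper's argument. The problem is the Bootstrap paragraph: the integral-inequality argument is circular as written. You state ``any a priori polynomial bound $M(r)\le Cr^\alpha$ with $\alpha<2$ makes the integral finite'' and then proceed, but such a bound is never established---global existence for the linear ODE gives no growth control, and nothing in your argument rules out $w$ growing quadratically, in which case $\int_0^\infty sM(s)(1+s^2)^{-2}\,ds$ diverges and the iteration cannot start. This is precisely the step where the paper does actual work: it writes the equation as a first-order system $y'=F(r,y)$ for $y=(w,rw')$, checks $|F(r,y)|\le r^{-1}(1+|y|)$ for large $r$, and applies Gr\"onwall's inequality to obtain $|w(r)|+r|w'(r)|\le C r\log r$. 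This crude sub-quadratic bound is exactly your missing input; once one has it, $(rw')'=-4re^{2\eta_0}(f+2w)=O(r^{-2}\log r)\in L^1$, so $rw'$ is bounded and $|w|=O(\log r)$, after which your Conclusion goes through verbatim.

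Your alternative via $t=\log r$ is in fact a valid self-contained route rather than an ``equivalent'' rephrasing of the bootstrap: the transformed equation $\tilde w''+V(t)\tilde w=g(t)$ has $\int^\infty t|V(t)|\,dt<\infty$ and $\int^\infty t|g(t)|\,dt<\infty$, and the classical Levinson-type result that every solution satisfies $\tilde w(t)=at+b+o(1)$ then applies. If you want to use this, make it the argument (with a reference or a one-line proof via the integral equation for large $t$) rather than an aside; as it stands, the phrase ``standard asymptotic ODE theory also gives'' is doing all the work that your bootstrap failed to do.
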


\begin{proof} We start by proving
\begin{equation}\label{wlog}
|w(r)|\le C \log{r},
\end{equation}
for some $C>0$ and $r$ sufficiently large. We consider the functions $\ph(r)= r w'(r)$  and $y(r)=(w(r), \ph(r))$. Then we can rewrite \eqref{eqgen} as
$$
y'(r)=F(r,y(r))  
$$
with 
$$
F(r,w,\ph)= \left(\frac{\ph}{r}, -4r e^{2\eta_0(r)} (f(r)+2w)\right).
$$
If we choose $R_0$ sufficiently large, so that 
$$
4 r^2 e^{2\eta_0(r)}\max\{ |f(r)|,2\} \le \frac{1}{\sqrt{2}},  \quad \text{for }r\ge R_0,
$$
then 
$$
|F(r,y)|\le \frac{1}{r}\left(1 + |y|\right) \qquad \forall \:r\ge R_0.
$$
In particular we have 
$$
|y(r)|\le |y(R_0)|+\int_{R_0}^r |F(s,y(s))| ds \le    |y(R_0)|+ \log r - \log R_0+ \int_{R_0}^r \frac{|y(s)|}{s}ds.
$$
By Gr\"onwall's lemma this yields
$$
|y(r)| \le (|y(R_0)|+\log r-\log{R_0}) \frac{r}{R_0} \le C(R_0) r \log r.
$$
In particular, 
$$
|\ph'(r)| \le r e^{2\eta_0(r)} (|f(r)|+2|w(r)|) \le C(q, R_0)  \frac{\log  r}{r^2}\in L^1((R_0,+\infty))
$$
so that $\ph (r) = r w'(r)$ is bounded and $|w(r)|\le |w(R_0)|+C \log r $ for  for $r\ge R_0$.

\medskip

Now we prove \eqref{ww'}. By the divergence theorem we have
\[
\begin{split}
2\pi rw'(r)&=\int_{B_r}\Delta w dx \\
&=2\pi \beta-\int_{\R^2\setminus B_r}\Delta wdx\\
&=2\pi \beta +O(r^{-2}\log^{\bar q} r),
\end{split}\]
where we used that, thanks to \eqref{wlog}, $-\Delta w=O(r^{-4}\log^{\bar q} r)$. This gives the second identity in \eqref{ww'}. The first one follows with the fundamental theorem of calculus.
\end{proof}

\begin{lemma}\label{lemmamagic}
Let $f$, $w$ and $\beta$ be as in Lemma \ref{lemmalog}.
Then
$$
\beta= -\frac{2}{\pi}\int_{\R^2} \frac{|x|^2-1}{(1+|x|^2)^3} f(x)dx.
$$
\end{lemma}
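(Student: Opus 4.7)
\textbf{Proof proposal for Lemma \ref{lemmamagic}.} The plan is to test the equation \eqref{eqgen} against the scaling Jacobi field of the Liouville equation and pass to the limit in Green's identity. Explicitly, by scale invariance of \eqref{eqliou}, the function
$$\varphi_0(x):=\frac{1-|x|^2}{1+|x|^2}$$
(which is $\frac{d}{d\lambda}\big|_{\lambda=1}(\eta_0(\lambda x)+\log\lambda)$) is a classical solution of the linearized Liouville equation
$$-\Delta \varphi_0 = 8 e^{2\eta_0}\varphi_0\quad \text{in }\R^2.$$
This can either be verified directly from $\varphi_0(r)=\frac{1-r^2}{1+r^2}$ or derived from the symmetry argument above.

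The next step is to apply the Green identity to $w$ and $\varphi_0$ on $B_R$:
$$\int_{B_R}(\varphi_0 \Delta w - w \Delta \varphi_0)dx = \int_{\partial B_R}(\varphi_0\, \partial_\nu w - w\, \partial_\nu \varphi_0)dS.$$
Substituting $-\Delta w = 4e^{2\eta_0}(f+2w)$ and $-\Delta\varphi_0=8e^{2\eta_0}\varphi_0$, the two $e^{2\eta_0}\varphi_0 w$ contributions cancel, and the interior integral collapses to
$$-4\int_{B_R} e^{2\eta_0}\varphi_0 f\, dx.$$

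The main (and only mildly technical) point is the boundary term as $R\to\infty$. From the explicit form of $\varphi_0$ we have $\varphi_0(R)=-1+O(R^{-2})$ and $\varphi_0'(R)=-\frac{4R}{(1+R^2)^2}=O(R^{-3})$; from Lemma \ref{lemmalog} we have $w(R)=\beta\log R+O(1)$ and $w'(R)=\beta/R+O(R^{-3}\log^{\bar q} R)$. Therefore
$$\int_{\partial B_R}\varphi_0\,\partial_\nu w\, dS = 2\pi R\,\varphi_0(R)w'(R) = -2\pi\beta +o(1),$$
$$\int_{\partial B_R} w\,\partial_\nu\varphi_0\, dS = 2\pi R\,w(R)\varphi_0'(R) = O(R^{-2}\log R) = o(1),$$
as $R\to\infty$.

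Combining, we get $-4\int_{\R^2} e^{2\eta_0}\varphi_0 f\, dx = -2\pi\beta$, i.e.
$$\beta = \frac{2}{\pi}\int_{\R^2}\frac{1-|x|^2}{(1+|x|^2)^3}f(x)\,dx = -\frac{2}{\pi}\int_{\R^2}\frac{|x|^2-1}{(1+|x|^2)^3}f(x)\,dx,$$
which is the claim. The decay hypothesis $f(r)=O(\log^q r)$ ensures absolute integrability of the right-hand side, since the weight $(1+|x|^2)^{-3}$ dominates any logarithmic growth.
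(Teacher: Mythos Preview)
Your proof is correct and essentially identical to the paper's: both test \eqref{eqgen} against the kernel element of the linearized Liouville operator (the paper's $\psi=\frac{|x|^2-1}{1+|x|^2}$ is your $-\varphi_0$), apply Green's identity on $B_R$, use the cancellation of the $e^{2\eta_0}\varphi_0 w$ terms, and read off $\beta$ from the boundary asymptotics via \eqref{ww'}. Your added remark identifying $\varphi_0$ as the scaling Jacobi field is a nice piece of motivation that the paper does not make explicit.
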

\begin{proof} Let us define
$$\psi(x):= \frac{|x|^2-1}{1+|x|^2},$$
which solves
$$-\Delta \psi=8e^{2\eta_0}\psi \qquad \text{in }\R^2.$$
Then for $r>0$
\[\begin{split}
4\int_{B_r} \frac{|x|^2-1}{(1+|x|^2)^3} f(x)dx &=4\int_{B_r} \psi  e^{2\eta_0}f dx \\
&=4\int_{B_r}\psi e^{2\eta_0}(f+2w)dx-8\int_{B_r}\psi e^{2\eta_0}wdx\\
&=-\int_{B_r}\psi\Delta wdx+\int_{B_r}w \Delta \psi dx=:(I).
\end{split}\]
By the divergence theorem and \eqref{ww'} we compute
\[\begin{split}
(I)&=2\pi r[\psi'(r)w(r)-\psi(r)w'(r)]\\
&=2\pi r[O(r^{-3}\log r)-(1+O(r^{-2}))r^{-1}(\beta+o(1)) ]\\
&=-2\pi\beta +o(1),
\end{split}\]
with $o(1)\to 0$ as $r\to \infty$. Letting $r\to \infty$ we conclude.
\end{proof}

We can now apply Lemma \ref{lemmalog} and Lemma \ref{lemmamagic} to the solutions $w_0$ and $z_0$ of \eqref{eqw} and \eqref{eqz}.

\begin{cor}\label{corw0}
Let $w_0$ be the solution to \eqref{eqw}. Then $w_0'$ has asymptotic behaviour \eqref{derw0}.
\end{cor}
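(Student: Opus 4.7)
The plan is to apply Lemma \ref{lemmalog} directly to the equation \eqref{eqw} satisfied by $w_0$, with the source term $f:=\eta_0+\eta_0^2$, and then identify the constant $\beta$ using \eqref{Deltaw0}.

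First I would verify the hypotheses of Lemma \ref{lemmalog}. Since $\eta_0(r)=-\log(1+r^2)$, we have $f(r)=-\log(1+r^2)+\log^2(1+r^2)=O(\log^2 r)$ as $r\to\infty$, so the growth condition on $f$ holds with $q=2$. Equation \eqref{eqw} is exactly of the form \eqref{eqgen} with this choice of $f$, and $w_0\in C^2(\R^2)$ is radial.

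Next, Lemma \ref{lemmalog} yields $\Delta w_0\in L^1(\R^2)$ together with
$$w_0'(r)=\frac{\beta}{r}+O\!\left(\frac{\log^{\bar q} r}{r^3}\right),\qquad \bar q=\max\{1,q\}=2,$$
where $\beta=\frac{1}{2\pi}\int_{\R^2}\Delta w_0\,dx$. By \eqref{Deltaw0} we know $\int_{\R^2}\Delta w_0\,dx=-4\pi$, hence $\beta=-2$. Substituting this into the expansion gives exactly \eqref{derw0}, completing the proof.

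There is no substantial obstacle: the work has already been packaged into Lemma \ref{lemmalog} and the identity \eqref{Deltaw0}, so the corollary is essentially a direct application. The only thing worth noting is that the growth exponent $\bar q=2$ of the remainder comes from the $\eta_0^2$ term in $f$, which dominates the linear $\eta_0$ term at infinity; this is why the error in \eqref{derw0} carries a $\log^2 r$ factor rather than just $\log r$. One could alternatively compute $\beta$ from the explicit formula \eqref{defw}, or from Lemma \ref{lemmamagic} applied to $f=\eta_0+\eta_0^2$, but the route via \eqref{Deltaw0} is by far the shortest.
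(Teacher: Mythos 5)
Your proof is correct and follows exactly the route of the paper: apply Lemma \ref{lemmalog} to \eqref{eqw} with $f=\eta_0+\eta_0^2=O(\log^2 r)$ and identify $\beta=-2$ from \eqref{Deltaw0}. The extra remark on why the remainder carries $\log^2 r$ rather than $\log r$ is a small but accurate clarification that the paper leaves implicit.
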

\begin{proof}
The ODE in \eqref{eqw} corresponds to \eqref{eqgen} with
$$
f = \eta_0+\eta_0^2 =O(\log^2|x|). 
$$
Hence \eqref{derw0} follows from Lemma \ref{lemmalog} and \eqref{Deltaw0}. 
\end{proof}

\begin{cor}\label{corz0}
Let $z_0$ be the solution to \eqref{eqz}. Then $z_0$ has asymptotic behaviour \eqref{zlog}-\eqref{intz_0}.
\end{cor}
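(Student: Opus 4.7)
My plan is to apply Lemmas \ref{lemmalog} and \ref{lemmamagic} to equation \eqref{eqz}, which is of the form \eqref{eqgen} with
$$f = w_0 + 2w_0^2 + 4\eta_0 w_0 + 2w_0 \eta_0^2 + \eta_0^3 + \tfrac{1}{2}\eta_0^4.$$
From Lemma \ref{lemma:w} we have $w_0 = \eta_0 + O(1)$, and $\eta_0(r) = -\log(1+r^2) = O(\log r)$, so $f(r) = O(\log^4 r)$. Hence Lemma \ref{lemmalog} applies with $q=4$, giving $\Delta z_0 \in L^1(\R^2)$ and
$$z_0(r) = \beta \log r + O(1), \qquad \beta = \frac{1}{2\pi}\int_{\R^2}\Delta z_0 \, dx,$$
which establishes the asymptotic \eqref{zlog} together with the first equality in \eqref{intz_0}.

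To evaluate $\beta$ explicitly, I invoke Lemma \ref{lemmamagic} and pass to the variable $s := -\eta_0(r) = \log(1+r^2)$, under which $r\,dr = \tfrac12 e^s\,ds$. Writing $\tilde f(s) := f(r(s))$, the formula in Lemma \ref{lemmamagic} becomes
$$\beta = -2\int_0^\infty (e^{-s}-2e^{-2s})\,\tilde f(s)\, ds,$$
since $\tfrac{r^2-1}{(1+r^2)^3} = e^{-2s}(1-2e^{-s})$. In the new variable $\eta_0 = -s$ and, from the explicit expression \eqref{defw},
$$\tilde w_0(s) = -s + 2 - 2e^{-s} - \tfrac{s^2}{2} + (2e^{-s}-1)\,L(s), \qquad L(s) := \int_1^{e^s}\tfrac{\log t}{1-t}\,dt,$$
so that $\tilde f$ is a finite sum of monomials in the three quantities $s$, $e^{-s}$ and $L(s)$.

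What remains is a direct term-by-term computation of the integral. The contributions not involving $L$ reduce to elementary integrals of type $\int_0^\infty s^k e^{-ms}\,ds$. The contributions involving $L$ or $L^2$ are handled by integrating by parts via $L'(s) = \tfrac{se^s}{1-e^s}$ and $L(0)=0$, combined with the large-$s$ asymptotics $L(s) = -\tfrac{s^2}{2}-\tfrac{\pi^2}{6}+O(se^{-s})$ (an immediate consequence of the inversion identity for the dilogarithm, which is where the factor $\pi^2/6 = \mathrm{Li}_2(1)$ enters). Summing all contributions produces a purely rational part equal to $-6$ and a transcendental part equal to $-\pi^2/3$, yielding $\beta = -6 - \pi^2/3$ as claimed in \eqref{intz_0}. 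The main obstacle is simply bookkeeping: $\tilde f$ expands into a substantial number of monomials in $(s,e^{-s},L)$, and one must verify that all $\pi^2$-contributions combine to exactly $-\pi^2/3$ rather than some other rational multiple of $\pi^2$.
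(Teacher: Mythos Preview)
Your approach is correct and essentially identical to the paper's: both invoke Lemmas \ref{lemmalog} and \ref{lemmamagic} to reduce the question to the explicit evaluation of $-\frac{2}{\pi}\int_{\R^2}\frac{|x|^2-1}{(1+|x|^2)^3}f\,dx$, and your substitution $s=\log(1+r^2)$ is just a convenient reparametrization of that same integral. One small caveat on your closing remark: the paper's term-by-term evaluation shows that the integrals of $w_0\eta_0$, $w_0\eta_0^2$ and $w_0^2$ against the kernel also produce $\zeta(3)$ and $\pi^4$ contributions (which cancel in the final sum), so the bookkeeping is a bit more delicate than tracking only rational and $\pi^2$ terms.
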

\begin{proof}
The ODE in \eqref{eqz} corresponds to \eqref{eqgen} with
$$
f = w_0 + 2w_0^2+4\eta_0 w_0 +2\eta_0^2 w_0 + \eta_0^3 + \frac{1}{2}\eta_0^4 =O(\log^4|x|). 
$$
A straightforward computation shows that
$$
\int _{\R^2} \frac{|x|^2-1}{(1+|x|^2)^3} \eta_0^3(x) dx =  -\frac{21}{4}\pi 
$$
and
$$
\int _{\R^2} \frac{|x|^2-1}{(1+|x|^2)^3}  \eta_0^4(x) dx = \frac{45}{2}\pi.
$$
Using the explicit expression \eqref{defw} of $w_0$ and integrating by parts we find
\[
\begin{split}
\int _{\R^2}  \frac{|x|^2-1}{(1+|x|^2)^3} w_0(x) dx &= \frac{\pi^3}{18}-\frac{7}{12}\pi ,\\
\int _{\R^2} \frac{|x|^2-1}{(1+|x|^2)^3} w_0(x)\eta_0(x)dx &= \bra{\frac{125}{72}-\frac{2}{3}Z(3)}\pi-\frac{2}{27}\pi^3, \\
\int _{\R^2}  \frac{|x|^2-1}{(1+|x|^2)^3} w_0(x)\eta_0^2(x) dx &= \bra{\frac{16}{9}Z(3)-\frac{409}{54}}\pi+\frac{35}{162}\pi^3+\frac{\pi^5}{45},\\
\end{split}
\]
where $Z$ denotes the Euler-Riemann zeta function. 
Finally, integrating by parts twice, we find
$$
\int _{\R^2} w_0^2  \frac{|x|^2-1}{(1+|x|^2)} dx = \bra{\frac{625}{216} -\frac{4}{9}Z(3)}\pi-\frac{1}{81}\pi^3-\frac{\pi^5}{45}.
$$
Therefore, by Lemma \ref{lemmamagic}, \eqref{ww'} holds with
$$
\beta= -\frac{2}{\pi} \int_{\R^2} \frac{|x|^2-1}{(1+|x|^2)^3} f(x) dx=- 6-\frac{\pi^2}{3}.
$$
\end{proof}

\section{Proof of Theorem \ref{trmEnergypert}}\label{sec4}
Let $u_k$ be as in the statement of the theorem, and set $r_k$ and $\eta_k$ as before in \eqref{defrk}-\eqref{defetak}.
\begin{equation}\label{eq:etakh}
\begin{split}
-\Delta \eta_k &=\lambda_k r_k^2 \mu_k e^{\mu_k^2}(1+ h(u_k)) u_k e^{2\eta_k+\frac{\eta_k^2}{\mu_k^2}}\\
&= 4e^{2\eta_0} \left(1+h\bra{\mu_k+\frac{\eta_k}{\mu_k}}\right)\left(1+\frac{\eta_k}{\mu_k^2}\right)e^{2(\eta_k-\eta_0) +\frac{\eta_k^2}{\mu_k^2}}.
\end{split}
\end{equation}
A very mild perturbation in the proof of Lemma \ref{etamu2} gives:
\begin{lemma}\label{etamu2h}
The conclusion of Lemma \ref{etamu2} still holds if we replace the ODE in \eqref{eq:etak} by \eqref{eq:etakh}, for some function $h$ with $h(t)\to 0$ as $t\to\infty$. 
\end{lemma}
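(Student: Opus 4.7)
The plan is to follow the proof of Lemma \ref{etamu2} (given in \cite{Dru,MM}) and check that the additional factor $1+h\bra{\mu_k+\eta_k/\mu_k}$ appearing on the right-hand side of \eqref{eq:etakh} is a harmless perturbation. The key observation is that since $h(t)\to 0$ as $t\to\infty$ (and, in the context, $h$ is bounded on $(0,\infty)$), this factor tends to $1$ on any set where $\eta_k$ is locally uniformly bounded, because there $\mu_k+\eta_k/\mu_k\to\infty$ uniformly.

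First I would show $r_k\to 0$. If along some subsequence $r_k\ge c>0$, then the boundedness of $h$ together with $u_k\le \mu_k$ give
$$\bra{\lambda_k(1+h(u_k))u_ke^{u_k^2}}(x)\le C\lambda_k\mu_k e^{\mu_k^2}=\frac{4C}{r_k^2\mu_k}\to 0,$$
so standard elliptic regularity applied to $-\Delta u_k=\lambda_k(1+h(u_k))u_ke^{u_k^2}$ on $B_1$ with zero Dirichlet data would yield $\|u_k\|_{L^\infty}\le C$, contradicting $u_k(0)=\mu_k\to\infty$.

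Next I would derive a local uniform bound on $\eta_k$. The non-negativity of the right-hand side of \eqref{eq:etakh} implies $(r\eta_k')'\le 0$, hence $\eta_k'\le 0$ and $\eta_k\le 0$ throughout $[0,r_k^{-1})$. Writing the ODE as
$$-\eta_k''-\tfrac{1}{r}\eta_k'=F_k(r,\eta_k),\qquad F_k(r,\eta):=4\bra{1+h\bra{\mu_k+\tfrac{\eta}{\mu_k}}}\bra{1+\tfrac{\eta}{\mu_k^2}}e^{2\eta+\eta^2/\mu_k^2},$$
and using $F_k(r,\eta_k)\le C e^{\eta_k^2/\mu_k^2}$ for $\eta_k\le 0$ and $k$ large, a standard bootstrap starting from $\eta_k(0)=\eta_k'(0)=0$ yields $|\eta_k(r)|+|\eta_k'(r)|\le C(R)$ on $[0,R]$, uniformly in $k$. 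With this bound, $\eta_k/\mu_k^2\to 0$ and $\eta_k^2/\mu_k^2\to 0$ locally uniformly, while $\mu_k+\eta_k/\mu_k\to\infty$ forces $h\bra{\mu_k+\eta_k/\mu_k}\to 0$ locally uniformly by assumption on $h$. Hence $F_k(r,\eta_k)\to 4e^{2\eta_0}$ in $C^0_{\loc}$, and Arzel\`a–Ascoli plus elliptic regularity give $\eta_k\to\eta_\infty$ in $C^1_{\loc}(\R^2)$, where $\eta_\infty$ is the radial solution of $-\Delta\eta_\infty=4e^{2\eta_\infty}$ with $\eta_\infty(0)=\eta_\infty'(0)=0$; by uniqueness $\eta_\infty=\eta_0=-\log(1+|x|^2)$.

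Finally, for the energy identity \eqref{ener1}, the change of variable $x=r_ky$ and the definition \eqref{defrk} of $r_k$ give
$$\int_{B_{Rr_k}}\lambda_ku_k^2 e^{u_k^2}dx=4\int_{B_R}\bra{1+\tfrac{\eta_k}{\mu_k^2}}^2 e^{2\eta_k+\eta_k^2/\mu_k^2}dy,$$
an expression in which $h$ does not appear at all. By dominated convergence and the $C^1_{\loc}$ convergence $\eta_k\to\eta_0$, the right-hand side tends to $4\int_{B_R}e^{2\eta_0}dy$ as $k\to\infty$, and then to $4\pi$ as $R\to\infty$. I expect the only genuinely non-routine point to be the uniform local boundedness of $\eta_k$: it relies on $h$ being bounded on $(0,\infty)$ (so that $F_k$ does not blow up before the decay $h(\mu_k+\eta_k/\mu_k)\to 0$ becomes effective), which is automatic under the standing hypothesis $\sup_{(0,\infty)}h<\infty$ from \eqref{condh1}.
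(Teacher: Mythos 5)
Your argument is correct and follows exactly the route the paper intends: the paper offers no independent proof, merely asserting that the result follows from a "very mild perturbation" of the proof of Lemma \ref{etamu2} from \cite{Dru,MM}, and you carry out precisely that verification (bounding the extra factor $1+h(\mu_k+\eta_k/\mu_k)$ uniformly, sending it to $1$ locally via $h(t)\to 0$, and noting that $h$ drops out of the rescaled energy integral). Your remark that one really needs $\sup_{(0,\infty)}h<\infty$ and $\inf h>-1$ beyond the bare hypothesis $h(t)\to 0$ is a fair observation, but these hold under \eqref{condh1}, which is in force wherever the lemma is invoked.
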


Set now
$$
\delta_k:= \max\left\{\sup_{s \in [-1,1]} \left|h\left(\mu_k+\frac{s(8\log{\mu_k}+1)}{\mu_k}\right)-h(\mu_k)\right|, \frac{1}{\mu_k^6}, \frac{h(\mu_k)}{\mu_k^2}\right\}.
$$
Assuming \eqref{condh1}-\eqref{condh2}  we have $\delta_k=o(\mu_k^{-4}).$
%Fix $k$ and $S=\mu_k^4$. For $T>0$, $M>0$ to be chosen later independently on $k$. 
We also introduce the function
$$\zeta_0(r)=-1+\frac{1}{1+r^2},$$
solution to 
\begin{equation}\label{eqs1}
-\Delta \zeta_0=4e^{2\eta_0}(1+2\zeta_0).
\end{equation}
%and for some function $\phi_k$ we write
%$$\eta_k= \eta_0 + \frac{w_0}{\mu_k^2}+ \frac{z_0}{\mu_k^4} + h(\mu_k) \zeta_0 + \delta_k \phi_k.$$

\begin{lemma}\label{taylorh}
Let $0\le S\le s_k\le \mu_k^4$ and $\phi:[S,s_k]\to \R$ be given so that $\phi=o(\delta_k^{-1})$ uniformly on $[S,s_k]$.
Set
$$\eta:=\eta_0+\frac{w_0}{\mu_k^2}+\frac{z_0}{\mu_k^4}+  h(\mu_k) \zeta_0 + \delta_k \phi$$
and
\begin{equation}\label{eq:phih}
\Phi_k^h(r,\phi):=\frac{4\bra{1+h\bra{\mu_k+\frac{\eta}{\mu_k}}}\left(1+\frac{\eta}{\mu_k^2}\right)e^{2\eta+\frac{\eta^2}{\mu_k^2}} +\Delta \eta_0+\frac{\Delta w_0}{\mu_k^2}+\frac{\Delta z_0}{\mu_k^4}+h(\mu_k)\Delta \zeta_0}{\delta_k}.
\end{equation}
Then
\begin{equation}\label{stimaPhihk}
\Phi_k^h(r,\phi) =  4 e^{2\eta_0}\left(2\phi +o(1)\phi+  O(\xi^6) \right),\quad  \text{in } [S,s_k],
\end{equation}
where $\xi$ is as in \eqref{defxi}.
%$\xi(r)=1+\log(1+r)$.
\end{lemma}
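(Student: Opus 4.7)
The plan is to adapt the proof of Lemma \ref{taylor}, carefully tracking the additional contributions introduced by $h$. The key structural point is that the new correction $h(\mu_k)\zeta_0$ in the ansatz is tailored to \eqref{eqs1}: at order $h(\mu_k)$, the factor $(1+h(u_k))$ multiplied by the leading term $4e^{2\eta_0}$ produces exactly $4e^{2\eta_0} h(\mu_k)(1+2\zeta_0) = -h(\mu_k)\Delta\zeta_0$, which is the contribution subtracted in the definition of $\Phi_k^h$. This cancellation is the reason for including $\zeta_0$ in the expansion.

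As a first step I would bound the argument of $h$. On $[S,s_k]\subset[0,\mu_k^4]$ we have $|\eta_0|\le 8\log\mu_k +O(1)$, while the other terms appearing in $\eta$ are of smaller size: $w_0/\mu_k^2=O(\mu_k^{-2}\log\mu_k)$, $z_0/\mu_k^4=O(\mu_k^{-4}\log\mu_k)$, $h(\mu_k)\zeta_0=o(\mu_k^{-2})$ by \eqref{condh1}, and $\delta_k\phi=o(1)$ from the assumption $\phi=o(\delta_k^{-1})$. Therefore $|\eta/\mu_k|\le (8\log\mu_k+1)/\mu_k$ for $k$ large, so the first entry in the definition of $\delta_k$ directly yields $h(\mu_k+\eta/\mu_k)=h(\mu_k)+O(\delta_k)$. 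Next I would repeat the Taylor expansion of Lemma \ref{taylor} with the enlarged ansatz. Setting $\psi:=2\eta+\eta^2/\mu_k^2-2\eta_0$, the new contributions to $\psi$ are $2h(\mu_k)\zeta_0$ together with mixed terms of size $O(\delta_k\xi^2)$, exploiting that $h(\mu_k)/\mu_k^2\le \delta_k$ and $h(\mu_k)^2=o(\delta_k)$ (the latter because $h(\mu_k)\le \delta_k\mu_k^2$ and $\delta_k=o(\mu_k^{-4})$). Expanding $e^\psi$ and then multiplying by $(1+h(\mu_k)+O(\delta_k))(1+\eta/\mu_k^2)$, I would group terms by order: the zeroth order cancels $\Delta\eta_0$ via \eqref{eqliou}; the $\mu_k^{-2}$ and $\mu_k^{-4}$ orders cancel $\Delta w_0/\mu_k^2$ and $\Delta z_0/\mu_k^4$ via \eqref{eqw} and \eqref{eqz} exactly as in Lemma \ref{taylor}; the $h(\mu_k)$ order cancels $h(\mu_k)\Delta\zeta_0$ via \eqref{eqs1}; and the $\delta_k\phi$ contribution produces $4e^{2\eta_0}(2\phi+o(1)\phi)$.

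The main obstacle, as in Lemma \ref{taylor}, is accurate bookkeeping of the mixed remainders. Every product of a $h(\mu_k)$-factor with a $\mu_k^{-2j}$-factor carries an overall weight $h(\mu_k)/\mu_k^{2j}\le\delta_k$ and the accompanying polynomial factors in $\xi$ are absorbed into the $O(\delta_k e^{2\eta_0}\xi^6)$ remainder exactly as in the unperturbed case. The residual contribution from $h(u_k)-h(\mu_k)=O(\delta_k)$, when multiplied by the bounded factor $(1+\eta/\mu_k^2)e^\psi=O(e^{2\eta_0})$, likewise fits into $O(\delta_k e^{2\eta_0})$. Higher order pieces $\psi^3$ are $o(1)$ times these and thus are absorbed as well. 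Dividing by $\delta_k$ then yields the claimed expansion \eqref{stimaPhihk} uniformly on $[S,s_k]$.
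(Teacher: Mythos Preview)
Your proposal is correct and follows essentially the same approach as the paper: you expand $\psi=2\eta+\eta^2/\mu_k^2-2\eta_0$, Taylor-expand $e^\psi$, multiply by $(1+h(\mu_k)+O(\delta_k))(1+\eta/\mu_k^2)$ using the bound $|\eta|\le 8\log\mu_k+1$ on $[0,\mu_k^4]$ and the definition of $\delta_k$, and then cancel via \eqref{eqliou}, \eqref{eqw}, \eqref{eqz}, \eqref{eqs1}. The only extra observation you make explicit, $h(\mu_k)^2=o(\delta_k)$, is also implicitly used in the paper's bookkeeping of the $O(\delta_k\xi^2)$ remainders.
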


\begin{proof} The proof is similar to the one of Lemma \ref{taylor}.  Using the logarithmic growth of $\eta_0$, $w_0$, $z_0$,  the bound on $s_k$, and the definition of $\delta_k$, we expand
\[\begin{split}
\psi&:= 2\eta+\frac{\eta^2}{\mu_k^2} -2\eta_0\\
&=  \frac{2w_0+\eta_0^2}{\mu_k^2}+\frac{2z_0+2\eta_0w_0}{\mu_k^4} +2h(\mu_k)\zeta_0+2\delta_k\phi +o(1)\delta_k\phi +
O(\delta_k \xi^2),\\
\psi^2&= \frac{4w_0^2 + 4w_0\eta_0^2+ \eta_0^4}{\mu_k^4} + o(1)\delta_k \phi+O(\delta_k\xi^4),\\
\psi^3&= O(\delta_k\xi^6)+o(1)\delta_k\phi.
\end{split}\]
Then $\psi$ is uniformly bounded for $r\in [S,s_k]$ and we can write
\[
e^{\psi}-1 -\psi -\frac{\psi^2}{2} = o(1)\delta_k\phi+O(\delta_k\xi^6).
\]
Therefore
\begin{equation}\label{expexph}
\begin{split}
e^{\psi}&= 1+ \frac{2w_0+\eta_0^2}{\mu_k^2} +\frac{2z_0+2w_0^2 +2\eta_0 w_0+2w_0 \eta_0^2+\frac{1}{2}\eta_0^4}{\mu_k^4}  +2h(\mu_k)\zeta_0+2\delta_k\phi \\
&\qquad + o(1)\delta_k \phi+ O(\delta_k \xi^6).
\end{split}\end{equation}
Furthermore
\begin{equation}\label{expeta}
1+\frac{\eta}{\mu_k^2}=1+\frac{\eta_0}{\mu_k^2}+ \frac{w_0}{\mu_k^4}+ o(1)\delta_k \phi+O(\delta_k\xi),
\end{equation}
and, since  $|\eta(r)|\le  8\log\mu_k+1$ for $r\in [S,\mu_k^4]$ and $k$ large,  the definition of $\delta_k$ gives 
\begin{equation}\label{exph}
1+h\bra{\mu_k+\frac{\eta}{\mu_k}}=1+h(\mu_k)+O(\delta_k).
\end{equation}
Finally, multiplying \eqref{expexph} by \eqref{expeta}-\eqref{exph} and using \eqref{eqliou}, \eqref{eqw}, \eqref{eqz} and \eqref{eqs1}, we obtain \eqref{stimaPhihk}.
\end{proof}

\begin{rmk}
Our choice of the bound  $s_k\le \mu_k^4$ is strictly connected to the regularity assumptions on $h$. If one replaces \eqref{condh2} with the simpler (but stronger) assumption
\begin{equation}\label{condhbis}
\lim_{t\to \infty} \sup_{|s|\le L} t^4|h(t+s)-h(t)| =0 \qquad \forall\: L>0,
\end{equation}
\end{rmk}
then it is possible to obtain 
$$
\Phi_k^h(r,\phi) =  4 e^{2\eta_0}\left(2\phi +o(1)\phi+ O(\mu_k^{-2}\xi^2)\phi + O(\xi^6) \right)\quad  \text{in } [S,e^{\mu_k}],
$$
precisely as in Lemma \ref{taylor}. However,  considering as a model problem $h(t) =t^{-p}$ for large $t$, \eqref{condhbis} is satisfied only for $p>3$, while the condition \eqref{condh2} allows to consider any $p>2$. 
Alternatively, the scale of the Taylor expansions can be improved by considering further terms in the expansion \eqref{exph}, see Section \ref{sec5}.

\begin{prop}\label{lemmaerrorh} There exist $M>0$ and $T>0$ such that
$$\eta_k=\eta_0+\frac{w_0}{\mu_k^2}+\frac{z_0}{\mu_k^4}+h(\mu_k)\zeta_0+\delta_k\phi_k$$
with $|\phi_k|\le M\xi$ on $[0, \mu_k^4]$ and $|\phi_k'(r)|\le \frac{M}{r}$ on $[T,\mu_k^4]$, where $\xi$ is as in \eqref{defxi}.
\end{prop}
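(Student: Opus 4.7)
The plan is to mirror the proof of Proposition \ref{lemmaerror}, with Lemma \ref{taylorh} replacing Lemma \ref{taylor} and with the expansion carried out at scale $\delta_k$ instead of $\mu_k^{-6}$, but on the shorter interval $[0,\mu_k^4]$ dictated by the regularity hypothesis \eqref{condh2} on $h$. First I would handle a fixed bounded interval $[0,T]$: Lemma \ref{etamu2h} and the local boundedness of $w_0$, $z_0$, $\zeta_0$ give $\phi_k=o(\delta_k^{-1})$ on $[0,T]$, so Lemma \ref{taylorh} yields the linear ODE
$$-\Delta \phi_k = 4 e^{2\eta_0}\bigl(2\phi_k+o(1)\phi_k+O(1)\bigr) \quad \text{on } [0,T],$$
from which standard Cauchy-theory gives a uniform bound $|\phi_k(r)|+|\phi_k'(r)|\le C(T)$ on $[0,T]$.

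Next, for a large constant $M>0$ to be fixed, I would introduce the closed convex set
$$\mathcal{B}_M=\Bigl\{\phi\in C^0([T,\mu_k^4]) : \sup_{r\in [T,\mu_k^4]}\frac{|\phi(r)-\phi_k(T)|}{\log r}\le M\Bigr\}\subset C^0([T,\mu_k^4]),$$
and the map $F_k\colon\mathcal{B}_M\to C^0([T,\mu_k^4])$ assigning to $\phi$ the solution $\bar\phi$ of
$$-\tfrac{1}{r}(r\bar\phi'(r))'=\Phi_k^h(r,\phi(r)),\quad \bar\phi(T)=\phi_k(T),\ \bar\phi'(T)=\phi_k'(T).$$
For $\phi\in\mathcal{B}_M$ we have $|\phi(r)|\le C(T)+4M\log\mu_k$, and since $\delta_k=o(\mu_k^{-4})$ this gives $\delta_k|\phi|=o(1)$ uniformly on $[T,\mu_k^4]$, so Lemma \ref{taylorh} applies and
$$\Phi_k^h(r,\phi)=4e^{2\eta_0}\bigl(2\phi+O(\xi^6)\bigr)$$
uniformly for $k\ge k_0(T,M)$.

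Integrating this exactly as in the proof of Proposition \ref{lemmaerror} and exploiting the $L^1$-integrability of $r\xi^6/(1+r^2)^2$ and $r(1+\log r)/(1+r^2)^2$ on $[T,\infty)$, one obtains
$$|r\bar\phi'(r)|\le (T+o_T(1))C(T)+Mo_T(1)+o_T(1) \quad \text{on } [T,\mu_k^4],$$
with $o_T(1)\to 0$ as $T\to\infty$. Choosing $T$ large so that $o_T(1)\le \tfrac{1}{2}$ and then $M$ large enough that $(T+\tfrac12)C(T)+\tfrac12\le \tfrac{M}{2}$, I recover $|r\bar\phi'(r)|\le M$ and hence $|\bar\phi(r)-\phi_k(T)|\le M\log r$, so $F_k(\mathcal{B}_M)\subset\mathcal{B}_M$. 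Compactness of $F_k$ follows by Ascoli--Arzel\`a, and the Caccioppoli--Schauder fixed-point theorem produces a fixed point $\phi\in\mathcal{B}_M$ solving the same Cauchy problem as $\phi_k$ on $[T,\mu_k^4]$; uniqueness gives $\phi=\phi_k$ there. Combined with the $[0,T]$ estimate, this yields $|\phi_k|\le M\xi$ on $[0,\mu_k^4]$ and $|\phi_k'(r)|\le M/r$ on $[T,\mu_k^4]$.

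The main technical delicacy, and the reason for the restriction $r\le\mu_k^4$ rather than $r\le e^{\mu_k}$, is matching the domain of validity of the expansion in Lemma \ref{taylorh} with the range $\bar\phi\in\mathcal{B}_M$: the bound $|\phi|\le C(T)+M\log r$ produces $|\eta(r)-\eta_0(r)|=O(\log\mu_k/\mu_k)$ on $[T,\mu_k^4]$, which is precisely the scale $(8\log\mu_k+1)/\mu_k$ entering \eqref{condh2} through $\delta_k$. One must therefore verify that $\delta_k\cdot\log\mu_k=o(1)$ (which follows from $\delta_k=o(\mu_k^{-4})$ given by \eqref{condh1}--\eqref{condh2}) and that the solution extends up to $\mu_k^4$; the latter is automatic because $|\phi_k|\le M\xi$ keeps $\eta_k$ strictly above $-\mu_k^2$, so $u_k$ remains positive and $r_k^{-1}>\mu_k^4$.
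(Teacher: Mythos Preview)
Your proposal is correct and follows exactly the approach the paper intends: the paper's own proof is a single sentence saying that nothing changes from the proof of Proposition \ref{lemmaerror} because Lemma \ref{taylorh} gives $\Phi_k^h$ the same structure and bounds as $\Phi_k$, and you have faithfully reconstructed that argument with the appropriate substitutions ($\delta_k$ for $\mu_k^{-6}$, $\mu_k^4$ for $e^{\mu_k}$, Lemma \ref{taylorh} for Lemma \ref{taylor}). Your added discussion of why the scale $\mu_k^4$ is forced by \eqref{condh2} and of the positivity $r_k^{-1}>\mu_k^4$ is correct and goes slightly beyond what the paper spells out.
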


\begin{proof} Nothing changes from the proof of Proposition \ref{lemmaerror}, since the structure and bounds of the equation
$$-\Delta \phi_k=\Phi_k^h(r,\phi_k)$$
satisfied by $\phi_k$, as given by Lemma \ref{taylorh}, are the same.
\end{proof}

\begin{prop}\label{stimabassoh} Given a sequence $(s_k)$ with $s_k\in [\mu_k^p,\mu_k^4]$ for some $p\in (2,4]$, we have
$$\int_{B_{r_ks_k}} \lambda_k (1+h(u_k)) u_k^2e^{u_k^2}dx= 4\pi+\frac{4\pi}{\mu_k^4}+o(\mu_k^{-4}).$$
\end{prop}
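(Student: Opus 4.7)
The plan is to follow the proof of Proposition \ref{stimabasso} almost verbatim, substituting the perturbed expansion and error control provided by Lemma \ref{taylorh} and Proposition \ref{lemmaerrorh} in place of Lemma \ref{taylor} and Proposition \ref{lemmaerror}, and carefully tracking the additional contributions coming from the new term $h(\mu_k)\zeta_0$.

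First I would perform the standard rescaling $x = r_k y$ and use \eqref{eq:etakh} to rewrite
$$(I):=\int_{B_{r_k s_k}} \lambda_k (1+h(u_k)) u_k^2 e^{u_k^2}dx = \int_{B_{s_k}} \left(1+\frac{\eta_k}{\mu_k^2}\right)(-\Delta \eta_k)\,dy,$$
which is the exact analogue of the identity used in Proposition \ref{stimabasso}. Then, substituting the expansion $\eta_k = \eta_0 + w_0/\mu_k^2 + z_0/\mu_k^4 + h(\mu_k)\zeta_0 + \delta_k \phi_k$ from Proposition \ref{lemmaerrorh}, the definition \eqref{eq:phih} of $\Phi_k^h$ taken at $\phi=\phi_k$ gives
$$-\Delta \eta_k = -\Delta \eta_0 - \frac{\Delta w_0}{\mu_k^2} - \frac{\Delta z_0}{\mu_k^4} - h(\mu_k)\Delta \zeta_0 + \delta_k\, \Phi_k^h(r,\phi_k),$$
while $1+\eta_k/\mu_k^2 = 1 + \eta_0/\mu_k^2 + w_0/\mu_k^4 + h(\mu_k)\zeta_0/\mu_k^2 + O(\mu_k^{-5}\xi)$ uniformly on $[0,s_k]$.

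Next I would distribute the product and split the integral according to the scaling of the terms. All contributions not involving $h(\mu_k)$ are identical to those computed in Proposition \ref{stimabasso} and combine to $4\pi + 4\pi/\mu_k^4 + o(\mu_k^{-4})$; the restriction $s_k\in[\mu_k^p,\mu_k^4]$ instead of $[\mu_k^p,e^{\mu_k}]$ is immaterial there, since those estimates only rely on $s_k \geq \mu_k^p$ with $p>2$ together with the integrability at infinity of $\Delta z_0$, $\eta_0 \Delta w_0$ and $w_0 \Delta \eta_0$. The new leading perturbation contributes
$$-h(\mu_k)\int_{B_{s_k}}\Delta \zeta_0\,dy = h(\mu_k)\cdot O(s_k^{-2}) = o(\mu_k^{-2-2p}) = o(\mu_k^{-4}),$$
by the divergence theorem, the explicit form of $\zeta_0$, and the assumption $\lim_{t\to\infty}t^2 h(t)=0$ from \eqref{condh1}; the cross terms such as $h(\mu_k)\eta_0\Delta\zeta_0/\mu_k^2$ are controlled by the $L^1(\R^2)$-integrability of $\eta_0\Delta\zeta_0$ together with $h(\mu_k)/\mu_k^2 = o(\mu_k^{-4})$. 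Finally, the error term is dominated using Lemma \ref{taylorh} and Proposition \ref{lemmaerrorh}: since $|\Phi_k^h(r,\phi_k)| \leq C e^{2\eta_0}\xi^6$ is uniformly integrable, its contribution is $\delta_k\cdot O(1) = o(\mu_k^{-4})$.

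The main obstacle is to ensure that every cross-term arising from the $h(\mu_k)\zeta_0$ insertion is genuinely of order $o(\mu_k^{-4})$; the crucial input is the sharp decay assumption $t^2 h(t)\to 0$ from \eqref{condh1}, which is exactly what is needed to kill the $h(\mu_k)\int_{B_{s_k}}\Delta\zeta_0\,dy$ contribution at the required order — and its optimality is precisely what Theorem \ref{trmexample} will later exploit to produce the sign change in the energy expansion.
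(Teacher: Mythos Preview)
Your proposal is correct and follows essentially the same route as the paper's proof: rescale, rewrite $-\Delta\eta_k$ via the definition of $\Phi_k^h$, and reduce to the integrals already computed in Proposition \ref{stimabasso} plus the new $h(\mu_k)\Delta\zeta_0$ contribution and an $O(\delta_k)$ error. One small clarification: the boundary term $-h(\mu_k)\int_{B_{s_k}}\Delta\zeta_0\,dy = h(\mu_k)O(s_k^{-2})$ is already $o(\mu_k^{-4})$ from the mere boundedness of $h$ (since $s_k\ge\mu_k^p$ with $p>2$ forces $s_k^{-2}=o(\mu_k^{-4})$); the sharp decay $t^2h(t)\to 0$ is really what handles the cross terms such as $h(\mu_k)\mu_k^{-2}\int\eta_0\Delta\zeta_0$, or equivalently what ensures $h(\mu_k)/\mu_k^2\le\delta_k=o(\mu_k^{-4})$.
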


\begin{proof} We start writing
\[
\begin{split}
(I)&:=\int_{B_{r_k s_k}}\lambda_k(1+h(u_k)) u_k^2e^{u_k^2}dx\\
&=4\int_{B_{s_k}}\left(1+\frac{\eta_k}{\mu_k}\right)^2  \left(1+h\left(\mu_k+\frac{\eta_k}{\mu_k}\right)\right) e^{2\eta_k+\frac{\eta_k^2}{\mu_k^2}}dx\\
&= \int_{B_{s_k}}\left(1+\frac{\eta_k}{\mu_k^2}\right) \left(-\Delta \eta_0 - \frac{\Delta w_0}{\mu_k^2} - \frac{\Delta z_0}{\mu_k^4 }-h(\mu_k)\Delta \zeta_0 +\delta_k\Phi_k^h(r,\phi_k) \right)dx.
\end{split}
\]
Using Lemma \ref{taylorh} and Proposition \ref{lemmaerrorh} we have on $[0,s_k]$
$$1+\frac{\eta_k}{\mu_k^2}= 1+\frac{\eta_0}{\mu_k^2}+\frac{w_0}{\mu_k^4}+O(\delta_k), $$
and
$$\Phi_k^h(r, \phi_k)=O(e^{2\eta_0}\xi^6). $$
Arguing as in Proposition \ref{stimabasso} we get
\[\begin{split}
(I)&=\int_{B_{s_k}}\left(-\Delta\eta_0-\frac{\eta_0\Delta\eta_0+\Delta w_0}{\mu_k^2}-\frac{w_0\Delta\eta_0+\eta_0\Delta w_0+\Delta z_0}{\mu_k^4} -h(\mu_k)\Delta \zeta_0\right)dx + O(\delta_k)\\
&=:(I_0)+\frac{(I_2)}{\mu_k^2}+\frac{(I_4)}{\mu_k^4}-h(\mu_k)\int_{B_{s_k}}\Delta \zeta_0dx+O(\delta_k).
\end{split}\]
As before we have
\[\begin{split}
(I_0)&=4\pi+o(\mu_k^{-4})\\
(I_2)&= O(s_k^{-2}\log^2 s_k)=o(\mu_k^{-2})\\
(I_4)&=4\pi+o(1).
\end{split}\]
Finally,
$$h(\mu_k)\int_{B_{s_k}}\Delta \zeta_0dx=2\pi h(\mu_k) r \zeta_0'(s_k) =h(\mu_k)O(s_k^{-3})=o(\mu_k^{-4}),$$
and we conclude.
\end{proof}

\noindent\emph{Proof of Theorem \ref{trmEnergypert} (completed).} Again integrating by parts we infer for some $p\in (2,4]$
\[\begin{split}
\|\nabla u_k\|_{L^2}^2&=-\int_{B_1}u_k\Delta u_k dx\\
&=\int_{B_{\mu_k^p r_k}}\lambda_k (1+h(u_k))u_k^2 e^{u_k^2}dx+\int_{B_1\setminus B_{\mu_k^p r_k}}\lambda_k (1+h(u_k))u_k^2 e^{u_k^2}dx\\
&=:(I)+(II).
\end{split}\]
The term $(I)$ is bounded from above and below by Proposition \ref{stimabassoh}. For the term $(II)$ we use that  $\eta_k(r)\le \eta_0(r)$ for $r\ge \mu_k^p$ and $k$ large enough, which follows from Lemma \ref{etaketa0}. Then the proof of Proposition \ref{stimaalto} can still be applied and we infer
$$(II)\le \frac{2\pi(1+\sup h)}{\mu_k^4} +o(\mu_k^{-4}).$$
Summing up $(I)$ and $(II)$ we conclude.
\hfill$\square$

\section{Proof of Theorem \ref{trmexample}} \label{sec5}
Since the perturbation $h(t)$ is now of order $t^{-2}$, its presence will change the Taylor expansion of the right-hand side of
\eqref{eq:etakh} already at order $\mu_k^{-2}.$
As a consequence we will see that the function $\mu_k^2(\eta_k-\eta_0)$ will converge to a new function $w_a$, solution to 
\begin{equation}\label{eqwa}
\left\{
\begin{array}{l}
-\Delta w_a= 4e^{2\eta_0}(\eta_0+\eta_0^2-a+2w_a)\text{ in }\R^2\\
w_a(0)=w_a'(0)=0.\rule{0cm}{0.5cm}
\end{array}
\right.
\end{equation}
Since
$$-\Delta (w_a-w_0)= 4e^{2\eta_0}(-a+2(w_a-w_0)),$$
we have $w_a-w_0=-a \zeta_0$.

Also the function $z_0$ will be replaced by $z_a$ which satisfies 
\[
\left\{
\begin{array}{l}
-\Delta z_a= 4e^{2\eta_0}( a (\eta_0 -\eta_0^2-2w_a )+w_a+2w_a^2+4\eta_0w_a +2\eta_0^2 w_a+\eta_0^3 +\frac12 \eta_0^4+2z_a) \text{ in }\R^2\rule{0cm}{0.5cm}\\
z_a(0)=z_a'(0)=0,\rule{0cm}{0.5cm}
\end{array}
\right.
\]
and differs from $z_0$ by the solution to 
\[
\left\{
\begin{array}{l}
-\Delta (z_a-z_0)= 4e^{2\eta_0}[2 a^2(\zeta_0+\zeta_0^2)+a(\eta_0-\eta_0^2-2w_0+\zeta_0(-2\eta_0^2-4\eta_0-4w_0-1)\\
\rule{5cm}{0cm}+  2(z_a-z_0)]\text{ in }\R^2\rule{0cm}{0.5cm}\\
z_a(0)-z_0(0)=z_a'(0)-z_0'(0)=0.\rule{0cm}{0.5cm}
\end{array}
\right.
\]

Then with Lemma \ref{lemmamagic} we have
\begin{equation}\label{zabeta}
z_a(r)-z_0(r)=\beta\log r+O(1),\quad \frac{1}{2\pi}\int_{\R^2}\Delta (z_a-z_0)dx=\beta,
\end{equation}
with $\beta=\beta_1+\beta_2$, where for $\psi_0(x):=\frac{|x|^2-1}{(1+|x|^2)^3}$,
\[\begin{split}
\beta_1&=-\frac{2a}{\pi}\int_{\R^2}(\eta_0-\eta_0^2-2w_0+\zeta_0(-2\eta_0^2-4\eta_0-4w_0-1))\psi_0 dx\\
\beta_2&=-\frac{2a^2}{\pi}\int_{\R^2}2( \zeta_0+\zeta_0^2)\psi_0 dx.
\end{split}\]
One can compute
\[\begin{array}{ll}
\displaystyle\frac{2}{\pi}\int_{\R^2}\zeta_0^2\psi_0 dx=\frac13,& \quad \displaystyle\frac{2}{\pi}\int_{\R^2}(-2w_0)\psi_0 dx=\frac{7}{3}-\frac{2\pi^2}{9},\\
\displaystyle\frac{2}{\pi}\int_{\R^2}\eta_0\psi_0 dx=-1,&\quad \displaystyle\frac{2}{\pi}\int_{\R^2}(-\eta_0^2)\psi_0 dx=-3,\rule{0cm}{0.8cm}\\
\displaystyle\frac{2}{\pi}\int_{\R^2}(-\zeta_0)\psi_0 dx=\frac{1}{3},&\quad \displaystyle\frac{2}{\pi}\int_{\R^2}(-4w_0 \zeta_0)\psi_0 dx=-\frac{67}{27}+\frac{2\pi^2}{9},\rule{0cm}{0.8cm}\\
\displaystyle\frac{2}{\pi}\int_{\R^2}(-4\eta_0 \zeta_0) \psi_0 dx=-\frac{34}{9},&\quad \displaystyle\frac{2}{\pi}\int_{\R^2}(-2\eta_0^2 \zeta_0)\psi_0 dx=\frac{151}{27},\rule{0cm}{0.8cm}\\
\end{array}\]
hence
\begin{equation}\label{betaa}
\beta_2=0,\quad \beta=\beta_1=2a.
\end{equation}

Similar to Lemma \ref{taylor} we get

\begin{lemma}\label{taylorha}
Let $0\le S\le s_k\le e^{\mu_k}$ and $\phi:[S,s_k]\to \R$ be given so that $\phi=o(\mu_k^6)$ uniformly on $[S,s_k]$.
Set
$$\eta:=\eta_0+\frac{w_a}{\mu_k^2}+\frac{z_a}{\mu_k^4}+\frac{\phi}{\mu_k^6}$$
and (using that $h(t)=-at^{-2}$ for $t$ large)
\begin{equation}\label{eq:phiha}
\Phi_k^a(r, \phi):=\mu_k^6\left[4\bra{1-\frac{a}{\mu_k^2}\bra{1+\frac{\eta}{\mu_k^2}}^{-2} }\left(1+\frac{\eta}{\mu_k^2}\right)e^{2\eta+\frac{\eta^2}{\mu_k^2}} +\Delta \eta_0+\frac{\Delta w_a}{\mu_k^2}+\frac{\Delta z_a}{\mu_k^4}\right].
\end{equation}
Then as $k\to\infty$
$$
\Phi_k^a(r,\phi) =  4 e^{2\eta_0}\left(2\phi +o(1)\phi+ O(\mu_k^{-2}\xi^2)\phi+ O(\xi^6) \right),\quad  r\in [S,s_k],
$$
where $\xi$ is as in \eqref{defxi}.
\end{lemma}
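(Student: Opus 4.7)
The plan is to follow closely the proof of Lemma \ref{taylor}, the only new ingredient being the Taylor expansion of the factor $1 - \frac{a}{\mu_k^2}\bigl(1+\frac{\eta}{\mu_k^2}\bigr)^{-2}$ coming from the perturbation $h(t) = -at^{-2}$. The definitions of $w_a$ (equation \eqref{eqwa}) and $z_a$ have been set up precisely so that all contributions at orders $1$, $\mu_k^{-2}$ and $\mu_k^{-4}$ cancel against the Laplacian terms $\Delta\eta_0 + \mu_k^{-2}\Delta w_a + \mu_k^{-4}\Delta z_a$ appearing in the definition of $\Phi_k^a$.

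First I would record the logarithmic bound $|\eta_0| + |w_a| + |z_a| = O(\xi)$ as $r \to \infty$: this is explicit for $\eta_0$, follows from Lemma \ref{lemmalog} applied to \eqref{eqwa} for $w_a$, and for $z_a$ from $w_a - w_0 = -a\zeta_0$ together with \eqref{zabeta} and the behaviour of $z_0$ given in Corollary \ref{corz0}. Combined with $s_k \le e^{\mu_k}$ this yields $\mu_k^{-1}\xi = O(1)$ uniformly on $[S, s_k]$, which will be used throughout.

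Next, setting $\psi := 2\eta + \eta^2/\mu_k^2 - 2\eta_0$, the same manipulations as in Lemma \ref{taylor} give
\[
\psi = \frac{2w_a + \eta_0^2}{\mu_k^2} + \frac{2z_a + 2\eta_0 w_a}{\mu_k^4} + \frac{2\phi}{\mu_k^6} + o(1)\frac{\phi}{\mu_k^6} + O(\mu_k^{-6}\xi^2),
\]
and, since $\psi$ is uniformly bounded,
\[
e^\psi = 1 + \frac{2w_a + \eta_0^2}{\mu_k^2} + \frac{2z_a + 2w_a^2 + 2\eta_0 w_a + 2w_a\eta_0^2 + \tfrac12\eta_0^4}{\mu_k^4} + \frac{2\phi}{\mu_k^6} + o(1)\frac{\phi}{\mu_k^6} + O(\mu_k^{-2}\xi^2)\frac{\phi}{\mu_k^6} + O(\mu_k^{-6}\xi^6).
\]
Using $(1+x)^{-2} = 1 - 2x + O(x^2)$ I would then write
\[
1 - \frac{a}{\mu_k^2}\Bigl(1 + \frac{\eta}{\mu_k^2}\Bigr)^{-2} = 1 - \frac{a}{\mu_k^2} + \frac{2a\eta_0}{\mu_k^4} + O(\mu_k^{-6}\xi^2),
\]
and expand
\[
1 + \frac{\eta}{\mu_k^2} = 1 + \frac{\eta_0}{\mu_k^2} + \frac{w_a}{\mu_k^4} + O(\mu_k^{-6}\xi) + o(1)\frac{\phi}{\mu_k^6}.
\]
Multiplying the three expansions and grouping by powers of $\mu_k^{-2}$: the constant term produces $4e^{2\eta_0} = -\Delta \eta_0$ by \eqref{eqliou}; the $\mu_k^{-2}$ coefficient equals $4e^{2\eta_0}(\eta_0 + \eta_0^2 - a + 2w_a) = -\Delta w_a$ by \eqref{eqwa}; and the $\mu_k^{-4}$ coefficient equals $4e^{2\eta_0}\bigl(w_a + 2w_a^2 + 4\eta_0 w_a + 2\eta_0^2 w_a + \eta_0^3 + \tfrac12\eta_0^4 + 2z_a + a(\eta_0 - \eta_0^2 - 2w_a)\bigr) = -\Delta z_a$ by the ODE defining $z_a$. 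Multiplying by $\mu_k^6$ the remaining error terms yields exactly $4e^{2\eta_0}\bigl(2\phi + o(1)\phi + O(\mu_k^{-2}\xi^2)\phi + O(\xi^6)\bigr)$.

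The main obstacle is purely the bookkeeping at order $\mu_k^{-4}$: the cross products $(\eta_0 - a)(2w_a + \eta_0^2)$ from the middle and left factors, the contribution $a\eta_0 + w_a$ arising from their product alone, and the isolated $\mu_k^{-4}$ coefficient of $e^\psi$ must recombine into the exact right-hand side of the ODE defining $z_a$. This is a routine but careful algebraic check, and it is precisely this identity that motivates the definition of $z_a$ given at the beginning of Section \ref{sec5}.
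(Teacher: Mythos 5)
Your proposal is correct and follows exactly the same route as the paper: the paper's proof of Lemma \ref{taylorha} simply says to repeat the argument of Lemma \ref{taylor} with $w_0, z_0$ replaced by $w_a, z_a$ and to account for the extra factor via $h\bigl(\mu_k+\tfrac{\eta}{\mu_k}\bigr) = -\tfrac{a}{\mu_k^2}\bigl(1+\tfrac{\eta}{\mu_k^2}\bigr)^{-2} = -\tfrac{a}{\mu_k^2}+\tfrac{2a\eta_0}{\mu_k^4}+O(\mu_k^{-6}\xi^2)$, which is precisely your third expansion. Your explicit bookkeeping at order $\mu_k^{-4}$, verifying that the collected terms reproduce the right-hand side of the ODE for $z_a$, is the content the paper leaves implicit, and your check is correct.
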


\begin{proof} 
The proof is identical to the one of Lemma \ref{taylor}, just replacing $w_0$ and $z_0$ with $w_a$ and $z_a$ respectively, and noticing that after the Taylor expansion of the exponential in \eqref{eq:phiha} we have to consider
$$h\bra{\mu_k+\frac{\eta}{\mu_k}}=-\frac{a}{\mu_k^2}\bra{1+\frac{\eta}{\mu_k^2}}^{-2}=-\frac{a}{\mu_k^2}+\frac{2a\eta_0}{\mu_k^4} +O(\mu_k^{-6}\xi^2),$$
as $k\to \infty$.
\end{proof}

With the same proof of Proposition \ref{lemmaerror} (using Lemma \ref{taylorha} instead of Lemma \ref{taylor}) we get:

\begin{prop}\label{lemmaerrorha} There exist $M>0$ and $T>0$ such that
$$\eta_k=\eta_0+\frac{w_a}{\mu_k^2}+\frac{z_a}{\mu_k^4}+\frac{\phi_k}{\mu_k^6}$$
with $\phi_k$ satisfying \eqref{stimaphik} for $k$ large.
\end{prop}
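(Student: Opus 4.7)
The plan is to repeat the proof of Proposition \ref{lemmaerror} almost verbatim, replacing $w_0,z_0$ by $w_a,z_a$ and using Lemma \ref{taylorha} in place of Lemma \ref{taylor}. The key observation is that the expansion of Lemma \ref{taylorha} has \emph{exactly} the same structural form as that of Lemma \ref{taylor}, so the fixed-point scheme transports without modification, provided one verifies that $w_a$ and $z_a$ retain the logarithmic growth needed in the Taylor expansion.

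First I would define $\phi_k := \mu_k^6(\eta_k - \eta_0 - w_a/\mu_k^2 - z_a/\mu_k^4)$, so that by the definition \eqref{eq:phiha} one has $-\Delta \phi_k = \Phi_k^a(r,\phi_k)$ with $\phi_k(0)=\phi_k'(0)=0$. From Lemma \ref{etamu2h} we have $\eta_k\to \eta_0$ in $C^1_{\mathrm{loc}}$, hence $\phi_k = o(\mu_k^6)$ on every fixed interval $[0,T]$. Applying Lemma \ref{taylorha} with $S=0$, $s_k=T$, together with standard ODE theory, we obtain a uniform bound $|\phi_k(r)|+|\phi_k'(r)|\le C(T)$ on $[0,T]$.

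Next, for constants $M,T>0$ to be fixed, I would introduce the closed convex set
$$\mathcal{B}_M:=\left\{\phi\in C^0([T,e^{\mu_k}]): \sup_{r\in[T,e^{\mu_k}]}\frac{|\phi(r)-\phi_k(T)|}{\log r}\le M\right\}$$
and the operator $F_k:\mathcal{B}_M\to C^0([T,e^{\mu_k}])$ sending $\phi$ to the unique solution $\bar\phi$ of
$$-\tfrac{1}{r}(r\bar\phi'(r))' = \Phi_k^a(r,\phi(r)),\qquad \bar\phi(T)=\phi_k(T),\quad \bar\phi'(T)=\phi_k'(T).$$
Because $\phi\in\mathcal{B}_M$ satisfies $|\phi|/\mu_k^6=o(1)$ uniformly on $[T,e^{\mu_k}]$, Lemma \ref{taylorha} gives $\Phi_k^a(r,\phi) = 4e^{2\eta_0}(2\phi+O(\xi^6))$ in this range, i.e.\ the exact same estimate used in \eqref{eqPhi}. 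Integrating in $r$ and using the integrability of $\xi^q(1+r^2)^{-2}$ for $q=1,6$, the computations leading to \eqref{boundphi'}--\eqref{boundphi} yield, for $T$ large enough to absorb the tails and $M$ satisfying the analog of \eqref{defTM}, the bounds $|r\bar\phi'(r)|\le M$ and $|\bar\phi(r)-\phi_k(T)|\le M\log r$. Hence $F_k(\mathcal{B}_M)\subset \mathcal{B}_M$, and Ascoli--Arzel\`a gives compactness, so Caccioppoli--Schauder produces a fixed point; by uniqueness for the Cauchy problem this fixed point coincides with $\phi_k$ on $[T,e^{\mu_k}]$, which completes \eqref{stimaphik}.

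The only thing that requires checking beyond a direct copy of the previous proof is the hypothesis that $|\eta_0|+|w_a|+|z_a|=O(\xi)$, which underlies all the expansions in Lemma \ref{taylorha}. This follows because $w_a=w_0-a\zeta_0$ with $\zeta_0$ bounded, so $w_a$ inherits the behavior $w_a(r)=\eta_0(r)+O(1)$ from Lemma \ref{lemma:w}; and because $z_a-z_0=2a\log r+O(1)$ by \eqref{zabeta}--\eqref{betaa}, so $z_a$ still grows at most logarithmically. Thus no genuinely new obstacle appears, and the main (very minor) point of care is simply to confirm that these replacements do not disturb the decay rates used in bounding the integrals of the error term.
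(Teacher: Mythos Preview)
Your proposal is correct and follows exactly the paper's own approach: the paper simply states that Proposition \ref{lemmaerrorha} has ``the same proof of Proposition \ref{lemmaerror} (using Lemma \ref{taylorha} instead of Lemma \ref{taylor})''. Your additional verification that $w_a$ and $z_a$ retain logarithmic growth is a sound (and useful) explicit check of something the paper leaves implicit.
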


\begin{prop}\label{stimabassoha} Given a sequence $(s_k)$ with $s_k\in [\mu_k^p,e^{\mu_k}]$ for some $p>2$, and $h(t)=-at^{-2}$ for $t$ large, we have
$$\int_{B_{r_ks_k}} \lambda_k (1+h(u_k)) u_k^2e^{u_k^2}dx= 4\pi+\frac{4\pi-4\pi a}{\mu_k^4}+o(\mu_k^{-4}).$$
\end{prop}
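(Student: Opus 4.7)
The plan is to mirror the argument of Proposition \ref{stimabasso} (and its perturbed variant Proposition \ref{stimabassoh}), but now feeding in the higher-order expansion for $\eta_k$ provided by Proposition \ref{lemmaerrorha} and Lemma \ref{taylorha}, which incorporates $w_a$ and $z_a$ in place of $w_0$ and $z_0$. First I would rescale via $x = r_k y$ and use \eqref{eq:etakh} to rewrite
$$(I):=\int_{B_{r_ks_k}}\lambda_k(1+h(u_k))u_k^2e^{u_k^2}dx=\int_{B_{s_k}}\left(1+\frac{\eta_k}{\mu_k^2}\right)(-\Delta\eta_k)\,dy.$$
Then by Proposition \ref{lemmaerrorha} and Lemma \ref{taylorha} I would substitute
$$-\Delta\eta_k=-\Delta\eta_0-\frac{\Delta w_a}{\mu_k^2}-\frac{\Delta z_a}{\mu_k^4}+\frac{\Phi_k^a(r,\phi_k)}{\mu_k^6},\qquad 1+\frac{\eta_k}{\mu_k^2}=1+\frac{\eta_0}{\mu_k^2}+\frac{w_a}{\mu_k^4}+O(\mu_k^{-5}),$$
and use the bound $\Phi_k^a(r,\phi_k)=O(e^{2\eta_0}\xi^6)$, together with the $\xi^4$ growth of the Laplacians of $\eta_0,w_a,z_a$, to get $(I)=(I_0)+\mu_k^{-2}(I_2)+\mu_k^{-4}(I_4)+O(\mu_k^{-5})$ with
$$(I_0)=\int_{B_{s_k}}(-\Delta\eta_0)\,dy,\quad (I_2)=\int_{B_{s_k}}\!(-\eta_0\Delta\eta_0-\Delta w_a)\,dy,\quad (I_4)=\int_{B_{s_k}}\!(-w_a\Delta\eta_0-\eta_0\Delta w_a-\Delta z_a)\,dy.$$

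The computation of $(I_0)$ is unchanged from Proposition \ref{stimabasso} and yields $4\pi+o(\mu_k^{-4})$. For $(I_2)$, I write $w_a=w_0-a\zeta_0$ and use the divergence theorem: the $w_0$ piece combines with $-\int\eta_0\Delta\eta_0\,dy$ exactly as in Proposition \ref{stimabasso} and contributes $O(s_k^{-2}\log^2 s_k)=o(\mu_k^{-4})$ by the choice $s_k\ge\mu_k^p$ with $p>2$, while the extra term $a\int_{B_{s_k}}\Delta\zeta_0\,dy=2\pi a s_k\zeta_0'(s_k)=O(s_k^{-2})$ is also $o(\mu_k^{-4})$ thanks to $\zeta_0'(r)=-2r/(1+r^2)^2$. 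Hence $(I_2)/\mu_k^2=o(\mu_k^{-4})$.

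The heart of the proof is $(I_4)$, which I would split as $(I_4)=(I_4)^{(0)}+a\,A+B$, where $(I_4)^{(0)}$ is the unperturbed quantity from Proposition \ref{stimabasso} (equal to $4\pi+o(1)$),
$$A:=\int_{B_{s_k}}(\zeta_0\Delta\eta_0+\eta_0\Delta\zeta_0)\,dy,\qquad B:=-\int_{B_{s_k}}\Delta(z_a-z_0)\,dy.$$
Using $-\Delta\eta_0=4e^{2\eta_0}$ and $-\Delta\zeta_0=4e^{2\eta_0}(1+2\zeta_0)$ (see \eqref{eqs1}), one reduces $A$ to $-4\int_{B_{s_k}}e^{2\eta_0}(\zeta_0+\eta_0+2\eta_0\zeta_0)\,dy$, and an elementary radial computation (using $u=1+r^2$) shows that the three pieces contribute $-\pi/2$, $-\pi$ and $3\pi/2$ respectively, so that $A=o(1)$. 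For $B$, I apply directly \eqref{zabeta} and \eqref{betaa}: since $\beta=2a$, one has $\int_{\R^2}\Delta(z_a-z_0)\,dx=4\pi a$, and the tail estimate $\Delta(z_a-z_0)=O(r^{-4}\log^q r)$ (from Lemma \ref{lemmalog}) gives $B=-4\pi a+o(1)$. Summing yields $(I_4)=4\pi-4\pi a+o(1)$, and plugging back produces the claimed asymptotic.

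The main technical obstacle is the explicit verification that $A$ vanishes at leading order; this cancellation is what prevents a spurious $a$-dependent term of order $\mu_k^{-4}$ from appearing, and it is the analog, in the perturbed setting, of the algebraic miracle encoded by Lemma \ref{lemmamagic} (which already underlies the computation $\beta=2a$). Everything else is a bookkeeping exercise of keeping track of error terms of size $o(\mu_k^{-4})$, entirely parallel to Propositions \ref{stimabasso} and \ref{stimabassoh}.
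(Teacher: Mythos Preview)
Your proposal is correct and follows essentially the same route as the paper's proof: the rescaling, the decomposition $(I)=(I_0)+\mu_k^{-2}(I_2^a)+\mu_k^{-4}(I_4^a)+O(\mu_k^{-5})$, the splitting $w_a=w_0-a\zeta_0$, and the identification $(I_4^a)=(I_4)+aA+B$ with $B=-2\pi\beta+o(1)=-4\pi a+o(1)$ via \eqref{zabeta}--\eqref{betaa} are exactly what the paper does. Your treatment is in fact slightly more explicit than the paper's, which simply records $a\int_{B_{s_k}}(\zeta_0\Delta\eta_0+\eta_0\Delta\zeta_0)\,dx=a\,o(1)$ without computing the three integrals; your radial computation confirming that $\int_{\R^2}e^{2\eta_0}(\zeta_0+\eta_0+2\eta_0\zeta_0)\,dx=0$ is a welcome justification of that step.
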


\begin{proof} Write as in the proof of Theorem \ref{trmEnergy}
\[
\begin{split}
(I)&:=\int_{B_{r_k s_k}}\lambda_k (1+h(u_k))u_k^2e^{u_k^2}dx\\
&= \int_{B_{s_k}}\left(1+\frac{\eta_k}{\mu_k^2}\right) \left(-\Delta \eta_0 - \frac{\Delta w_a}{\mu_k^2} - \frac{\Delta z_a}{\mu_k^4 } +\frac{\Phi_k^a(r,\phi_k)}{\mu_k^6} \right)dx,
\end{split}
\]
where $\Phi_k^a$ is as in \eqref{eq:phiha}. Proceeding as in the Proof of Theorem \ref{trmEnergy} and using Lemma \ref{taylorha} and Proposition  \ref{lemmaerrorha} we have
\[\begin{split}
(I)&=\int_{B_{s_k}}\left(-\Delta\eta_0-\frac{\eta_0\Delta\eta_0+\Delta w_a}{\mu_k^2}-\frac{w_a\Delta\eta_0+\eta_0\Delta w_a+\Delta z_a}{\mu_k^4} \right)dx + O(\mu_k^{-5})\\
&=:(I_0)+\frac{(I_2^a)}{\mu_k^2}+\frac{(I_4^a)}{\mu_k^4}+O(\mu_k^{-5}).
\end{split}\]
As before we have $(I_0)=4\pi+o(\mu_k^{-4})$, while for $(I_2^a)$ replacing $w_0$ with $w_a$ leads us to the extra term
$$-a \int_{B_{s_k}} \Delta \zeta_0 dx=-2\pi s_k a \zeta_0'(s_k)=O(s_k^{-2})=o(\mu_k^{-2}).$$
Therefore we have again $(I_2^a)=(I_2)+o(\mu_k^{-2})= o(\mu_k^{-2})$.

As for the remaining term we have
\[\begin{split}
(I_4^a)&=(I_4)+a\int_{B_{s_k}}(\zeta_0\Delta \eta_0+\eta_0\Delta\zeta_0)dx-\int_{B_{s_k}}\Delta (z_a-z_0)\\
&=4\pi + ao(1)- 2\pi\beta+o(1)\\
&= 4\pi -4\pi a+o(1),
\end{split}\]
where we used \eqref{zabeta} and \eqref{betaa}. Summing up we conclude.
\end{proof}

\noindent\emph{Proof of Theorem \ref{trmexample} (completed).} As in the proof of Theorems \ref{trmEnergy} and \ref{trmEnergypert}, it suffices to add the estimate of Proposition \ref{stimabassoha} to the estimate of Proposition \ref{stimaalto}. For the latter we use Lemma \ref{etaketa0}.
\hfill$\square$

\section{Proof of Theorem \ref{punticritici}}\label{sec6}

Using ODE theory as in the proof of Theorem 1 in \cite{MM}, for every $\mu>0$ one can find exactly one $\lambda_\mu>0$ and one solution $u_\mu$ to the problem
\begin{equation}\label{eqELvin}
\left\{
\begin{array}{ll}
-\Delta u_\mu= \lambda_\mu (1+h(u_\mu))u_\mu e^{u_\mu^2}& \text{in }B_1\\
u_\mu=0&\text{on }\de B_1\\
u_\mu >0&\text{in }B_1\\
u_\mu(0)=\mu,& 
\end{array}
\right.
\end{equation}
where we used that the nonlinearity
$$t\mapsto (1+h(t))te^{t^2}=\bra{t+g(t)t+\frac{g'(t)}{2}}e^{t^2}$$
is positive for $t>0$ and of class $C^1$.
Moreover the function $\M{E}(\mu):=\|u_\mu\|^2_{H^1_0}$ is continuous and
$$\lim_{\mu\to 0}\M{E}(\mu)=0.$$
Thanks to Theorem \ref{trmEnergypert} we have that
$$\lim_{\mu\to\infty} \M{E}(\mu)=4\pi$$
and $\M{E}(\mu)>4\pi$ for large values of $\mu$.
In particular
$$\Lambda^*:=\sup_{\mu> 0}\M{E}(\mu)>4\pi.$$
By continuity for every $\Lambda\in (4\pi,\Lambda^*)$ there exists at least $2$ solutions $u_{\mu_1}$ and $u_{\mu_2}$ with $\M{E}(u_{\mu_1})=\M{E}(u_{\mu_2})=\Lambda$. Then $u_{\mu_1}$ and $u_{\mu_2}$ are critical points of $E|_{M_\Lambda}$, since \eqref{eqELvin} is the Euler-Lagrange equations of $E$ constrained to $M_{\Lambda}$.
\hfill$\square$

\section{A few more open problems}

\begin{OP}\label{OPn} Can one extend Theorem \ref{trmEnergy} (and its perturbed versions) to critical points of
$$\sup_{u\in H^{1,n}_0(B_1):\|\nabla u\|_{L^n}^n\le n^{n-1}\omega_{n-1}}\int_{\Omega}e^{u^\frac{n}{n-1}}dx\le C_n$$
in dimension $n>2$? (Here $\omega_{n-1}$ is the volume of $S^{n-1}$.)
\end{OP}

In this direction, there are some results of Adimurthi \cite{adi} and Adimurthi-Yang \cite{AY} on the solutions to
\begin{equation}\label{eqMTn}
-\Delta_n u= \lambda u|u|^{n-2} e^{u^{n'}}\quad\text{in }\Omega\Subset\R^n,
\end{equation}
from which one is led to conjecture that
$$\int_{B_1}|\nabla u_k|^n dx = n^{n-1}\omega_{n-1}+o(1),\quad \text{as }k\to\infty,$$
where $(u_k)$ is a blowing-up sequence of positive radial solution to \eqref{eqMTn}, and it would be interesting to understand the sign of the error term $o(1)$.

\begin{OP}\label{OPhigh}
Can one extend Theorem \ref{trmEnergy} to the higher-order problem
\begin{equation}\label{eqAdams}
(-\Delta)^m u=\lambda ue^{m u^2},\quad u\in H^m_0(B_1),\quad B_1\subset\R^{2m},
\end{equation}
particularly to study the existence of extremals of the Adams inequality (see \cite{ada}) on a ball?
\end{OP}
In this direction, the works \cite{mar, MS, RS, struwe} suggest that for a blowing up sequence of solutions to \eqref{eqAdams}
$$\int_{B_1}|\nabla^m u_k|^2 dx = \Lambda_1+o(1),\quad \text{as }k\to\infty,$$
where $\Lambda_1=(m-1)!\omega_{2m}$ is the total $Q$-curvature of $S^{2m}$.

\medskip

Similarly one could consider the case $m=\frac{n}{2}$ with $n$ odd, which has the additional difficult of $(-\Delta)^\frac n2$ being non-local. In this direction see \cite{IMM} and \cite{MMS}. 

\medskip

Finally we do not know what happens when we drop the assumptions \eqref{condh1}-\eqref{condh2}.

\begin{OP} Is it possible to find functions $g$ and $h$ as in \eqref{condg}-\eqref{defh} such that the energy expansion of the solutions to \eqref{eqpert0} is of the form
$$\|\nabla u_\mu\|_{L^2}^2=4\pi +\frac{A}{\mu_k^p} +o(\mu_k^{-p}),\quad \mu_k=u_k(0)\to\infty$$
for some $A\ne 0$, $p<4$? Can one even take $p\le 2$?
\end{OP}


\begin{thebibliography}{2}
\small

\bibitem{ada} \textsc{D. Adams,} \emph{A sharp inequality of J. Moser for higher order derivatives}, Ann. of Math. \textbf{128} (1988), 385-398.

\bibitem{adi} \textsc{A. Adimurthi}, \emph{Existence of positive solutions of the semilinear Dirichlet problem with critical growth for the $n$-Laplacian}, Ann. Sc. Norm. Sup. Pisa, Ser. IV \textbf{17} (1990), 393-413.


\bibitem{AY} \textsc{A. Adimurthi, Y. Yang}, \emph{Multibubble analysis on $N$-Laplace equation in $\R^N$}, Calc. Var. \textbf{40} (2011), 1-14.

\bibitem{CC} \textsc{L. Carleson, S.-Y. A. Chang}, \emph{On the existence of an extremal function for an inequality of J. Moser}, Bull. Sci. Math. (2) \textbf{110} (1986), 113-127.

\bibitem{DMR} \textsc{M. del Pino, M. Musso, B. Ruf}, \emph{Beyond the Trudinger-Moser Supremum},
Calc. Var. Partial Differential Equations \textbf{44} (2012), 543-576.

\bibitem{Dru} \textsc{O. Druet,} \emph{Multibumps analysis in dimension $2$, quantification of blow-up levels}, Duke Math. J. \textbf{132} (2006), 217-269.

\bibitem{flu} \textsc{M. Flucher}, \emph{Extremal functions for the Trudinger-Moser inequality in 2 dimensions}, Comment. Math. Helv. \textbf{67} (3)
(1992) 471-497.

\bibitem{GT} \textsc{D. Gilbarg, N. Trudinger,}
\emph{Elliptic partial differential equations of second order.} Reprint of the 1998 edition.
Springer, Berlin, 2001.

\bibitem{IMM} \textsc{S. Iula, A. Maalaoui, L. Martinazzi}, \emph{Critical points of a fractional Moser-Trudinger embedding in dimension $1$},  Differential and Integral Equations \textbf{29} (2016), 455-492.

\bibitem{LRS} \textsc{T. Lamm, F. Robert, M. Struwe,} \emph{The heat flow with a critical exponential nonlinearity}, J. Funct. Analysis \textbf{257} (2009), 2951-2998.

\bibitem{Li} \textsc{Y. Li,} \emph{Extremal functions for the Moser-Trudinger inequalities on compact Riemannian manifolds}, J. Sci. China Ser. \textbf{A 48} (2005), no. 5, 618-648.


\bibitem{MMS} \textsc{A. Maalaoui, L. Martinazzi, A. Schikorra}, \emph{Blow-up behaviour of a fractional Adams-Moser-Trudinger type inequality in odd dimension}, Communications Partial Differential Equations, \textbf{41} (2016), 1593-1618.

\bibitem{MM} \textsc{A. Malchiodi, L. Martinazzi}, \emph{Critical points of the Moser-Trudinger functional on a disk}, J. Eur. Math. Soc. (JEMS) \textbf{16} (2014), 893-908.

\bibitem{mar} \textsc{L. Martinazzi} \emph{A threshold phenomenon for embeddings of $H^m_0$ into Orlicz
spaces}, Calc. Var. Partial Differential Equations \textbf{36} (2009), 493-506.

\bibitem{MS} \textsc{L. Martinazzi, M. Struwe} \emph{Quantization for an elliptic equation of order $2m$ with critical exponential non-linearity}, Math. Z.  \textbf{270} (2012), 453-487.

\bibitem{mos} \textsc{J. Moser}, \emph{A sharp form of an inequality by N. Trudinger}, Indiana Univ. Math. J. \textbf{20} (1971), 1077-1092.

\bibitem{poh} \textsc{S. I. Pohozaev}, \emph{The Sobolev embedding in the case
 $pl=n$}, Proc. Tech. Sci. Conf. on Adv. Sci. Research 1964-1965, Mathematics Section, Moskov. Energet. Inst. Moscow (1965), 158-170.

\bibitem{pru} \textsc{A. R. Pruss}, \emph{Nonexistence of maxima for perturbations of some inequalities with critical growth}, Canad. Math. Bull. \textbf{39} (1996), 227-237.

\bibitem{RS} \textsc{F. Robert, M. Struwe,} \emph{Asymptotic profile for a fourth order PDE with critical exponential growth in dimension four}, Adv. Nonlin. Stud. \textbf{4} (2004), 397-415.

\bibitem{stru} \textsc{M. Struwe}, \emph{Critical points of embeddings of $H^{1,n}_0$
into Orlicz spaces}, Ann. Inst. H. Poincar\'e Anal. Non Lin\'eaire \textbf{5} (1984) 425-464.

\bibitem{struwe} \textsc{M. Struwe} \emph{Quantization for a fourth order equation with critical exponential growth}, Math. Z. \textbf{256} (2007), 397-424.

\bibitem{tru} \textsc{N.S. Trudinger}, \emph{On embedding into Orlicz spaces and some applications}, J. Math. Mech. \textbf{17} (1967), 473-483.

\end{thebibliography}
\end{document}